\documentclass[review,12pt]{elsarticle}
\usepackage{amsmath}
\usepackage{amsfonts}
\usepackage{geometry} 
\usepackage{xcolor}  
\usepackage[parfill]{parskip}    
\usepackage{lineno}
\usepackage{hyperref}

\hypersetup{colorlinks=true, urlcolor=red} 
\usepackage{graphicx}
\usepackage{amssymb}
\usepackage{epstopdf}
\usepackage{psfrag}
\usepackage{subfigure}

\newcommand {\bea}{\begin{eqnarray}}
\newcommand {\ea}{\end{eqnarray}}
\newenvironment{proof}[1][Proof]{\textbf{#1.} }{\hspace{\stretch{1}}\rule{0.5em}{0.5em}}

\usepackage{xspace}

\newtheorem{thm}{Theorem}
\newtheorem{lem}{Lemma}

\newtheorem{prp}{Proposition}

\newtheorem{ass}{Assumption}

\begin{document}
\begin{frontmatter}

\title{Strong error analysis  of a stochastic exponential integrator for SPDEs driven by fractional Brownian motion with $H<1/2$}
\author[at]{Antoine Tambue}
\ead{antonio@aims.ac.za}
\address[at]{Department of Computer science, Electrical engineering and Mathematical sciences,  Western Norway University of Applied Sciences, Inndalsveien 28, 5063 Bergen.}
\author[ajn]{Aurelien Junior Noupelah}
\address[ajn]{D\'epartement de Math\'ematiques et de Statistique, Universit\'e de Moncton, NB E1A 3E9, Canada}
\cortext[cor1]{Corresponding author}
\ead{aurelien.junior.noupelah@umoncton.ca}


\author[laf]{Louis Aim\'e Fono}
\ead{lfono2000@yahoo.fr}
\address[laf]{Equipe MASS, Laboratoire de Math\'ematiques,  UFD MIA, EDOSFA, Universit\'e de Douala, P.O. BOX 24157, Douala, Cameroun}

\begin{abstract}
Fractional Brownian motion provides a useful framework for modeling
random phenomena with anti-persistent temporal correlations that arise in
applications such as anomalous diffusion, hydrology, finance, and
geophysical processes. In this paper, we study a class of semilinear
stochastic partial differential equations driven by additive
fractional Brownian motion with Hurst parameter
$H\in(0,\frac12)$. We establish the well-posedness and space-time
regularity of the mild solution and investigate the strong convergence
of a stochastic exponential integrator for the temporal discretization.
The analysis allows the linear operator to be non-self-adjoint and
combines analytic semigroup estimates with the canonical Hilbert-space
structure associated with fractional Brownian motion and Malliavin
calculus to handle the low temporal regularity of the noise. We derive
an $H$-dependent strong temporal convergence rate, which reaches
$H+\frac12$ under maximal spatial regularity. Numerical experiments
for a stochastic advection--diffusion--reaction problem with
heterogeneous Darcy flow confirm the theoretical temporal convergence
rates. We also estimate the mean of the solution with different Hurst parameters $H\in(0,\frac12)$ using the Monte Carlo method.
\end{abstract}
\begin{keyword}
Stochastic partial differential equations \sep
Fractional Brownian motion \sep
Stochastic exponential integrator \sep
Strong convergence \sep
Malliavin calculus \sep
Regularity estimates
\end{keyword}
\end{frontmatter}

\section{Introduction}
Uncertainty quantification  of transport phenomena in porous media with anti-persistent temporal correlations requires  sophistical mathematical techniques to accurately  approximate the  samples of the solution  of the corresponding Stochastic Partial Differential Equation (SPDE) model,  driven by 
fractional Brownian motion noise with Hurst parameter $H\in(0,\frac12)$.
In this regard, given a fixed final time $T>0$, we consider the numerical approximation of  the following  semilinear SPDE
\begin{equation}
\label{pb}
\left\{
\begin{array}{ll}
dX(t)+AX(t)\,dt
=
F(X(t))\,dt+\phi\,dB_Q^H(t),
& t\in(0,T],\\[1mm]
X(0)=X_0,
\end{array}
\right.
\end{equation}
in the Hilbert space $\mathcal{H}=L^2(\Lambda)$, where
$\Lambda\subset\mathbb{R}^d$, $d=1,2,3$, is a bounded domain.
Here, $-A$ generates an analytic semigroup
$S(t)=e^{-tA}$ on $\mathcal{H}$, with $A$ not necessarily
self-adjoint, $F:\mathcal{H}\rightarrow\mathcal{H}$ denotes the
nonlinear drift, and $\phi$ is a deterministic operator describing
the spatial regularity of the noise. The process $B_Q^H$ is an
$\mathcal{H}$-valued fractional Brownian motion (fBm) with covariance
operator $Q$ and Hurst parameter $H\in\left(0,\frac12\right)$.
Precise assumptions on the coefficients, the initial datum, and the
fractional noise are given in Section~\ref{mathsetting}. Under these
assumptions, the solution of \eqref{pb} is understood in the mild
sense as
\begin{equation}
\label{mild}
X(t)
=
S(t)X_0
+
\int_0^t S(t-s)F(X(s))\,ds
+
\int_0^t S(t-s)\phi\,dB_Q^H(s),
\qquad t\in[0,T].
\end{equation}

In recent years, considerable progress has been made in stochastic
calculus and stochastic differential equations driven by fractional
Brownian motion, particularly for $H>\frac12$; see, for instance,
\cite{Alo,Car,Mem,Mis}. The theory of stochastic partial differential
equations driven by infinite-dimensional fractional Brownian motion
has also been investigated in several works; see
\cite{Car,Duna,Dunb,Lia,Lib,Lic} and the references therein.
These contributions provide important results on the existence,
uniqueness, and regularity of solutions to stochastic evolution
equations with fractional Gaussian forcing.

The numerical approximation of SPDEs driven by fractional Brownian
motion has also attracted increasing attention. Most of the available
strong convergence results, however, concern the smoother regime
$H>\frac12$. Wang, Qi and Jiang \cite{Wanc} established sharp
mean-square spatial and temporal regularity results for semilinear
parabolic SPDEs driven by infinite-dimensional fBm and obtained
optimal convergence rates for a spectral Galerkin approximation
combined with a linear implicit Euler method. In the finite element
setting, Noupelah and Tambue \cite{Nou} studied semilinear SPDEs
driven by additive fBm and a Poisson random measure. They considered
linear implicit Euler, exponential, and stochastic exponential
Rosenbrock time-stepping schemes and allowed the linear operator to
be non-self-adjoint. More recently, Chen, Xia and Yan
\cite{ChenXiaYan2026} considered linear elliptic and parabolic SPDEs
driven by fractional noise and developed numerical approximations
based on difference and finite element methods.

Exponential integrators have more recently received particular
attention for equations driven by fractional Brownian motion.
Kamrani, Debrabant and Jamshidi
\cite{KamraniDebrabantJamshidi2024} analyzed an exponential Euler
method for stiff stochastic differential equations with additive
fractional Brownian noise. For infinite-dimensional problems,
Kamrani \cite{Kamrani2025} combined a spectral Galerkin
approximation with an exponential Euler method for SPDEs driven by
fBm with $H>\frac12$ and derived strong convergence estimates for
the resulting fully discrete approximation. Very recently, Kamrani
and Debrabant \cite{KamraniDebrabant2026} investigated exponential
Euler schemes for systems of fractional stochastic differential
equations arising from spatial discretizations of semilinear SPDEs.
Using Malliavin calculus, they obtained an improved strong convergence
order for one of the considered schemes. This result demonstrates
that a refined stochastic analysis can lead to temporal convergence
rates beyond those suggested by the H\"older regularity of the
fractional Brownian paths alone.

Despite these developments, the numerical analysis of semilinear
parabolic SPDEs in the rough regime $H\in\left(0,\frac12\right)$
remains substantially more delicate. In contrast to the case
$H>\frac12$, the increments of fBm are negatively correlated and its
sample paths have lower temporal regularity. Moreover, several
stochastic estimates commonly used in the smoother fractional regime
cannot be directly extended to $H<\frac12$; in particular, the
estimate in \cite[Lemma 1]{Car}, which has been extensively used in
the analysis for $H>\frac12$, is not applicable in the present
setting. The treatment of the stochastic terms therefore requires
estimates adapted to the canonical Hilbert space associated with fBm
and, for random integrands, tools from Malliavin calculus and
divergence-type stochastic integration; see
\cite{Bar,Mis,Nualart,DecreusefondUstunel}.

Although the rough fractional regime is more challenging, some
numerical results are available. Cao, Hong and Liu \cite{Cao}
investigated a Galerkin finite element approximation for a
one-dimensional second-order stochastic differential equation of
boundary value type driven by fBm with $H\leq\frac12$. More recently,
Ding and Wang \cite{Din} studied stochastic evolution equations
driven by multiplicative fBm, including the case
$H\in(0,\frac12)$, and established strong convergence results for a
fully discrete scheme based on a spectral Galerkin approximation in
space and a time-stepping method in time.

A further limitation of several existing numerical analyses is the
assumption that the linear operator is self-adjoint, which naturally
leads to spectral Galerkin approximations. Such a framework can be
restrictive for second-order differential operators involving, for
instance, advection terms. Finite element methods provide greater
flexibility in this respect, as they allow the treatment of
non-self-adjoint operators without requiring an explicit spectral
decomposition.

Motivated by these observations, we investigate the strong numerical
approximation of \eqref{pb} in the rough fractional regime
$H<\frac12$. We combine a standard finite element discretization in
space with a stochastic exponential integrator in time. The main
challenge is to derive a sharp temporal error estimate despite the low
regularity of the fractional noise. To overcome this difficulty, we
combine analytic semigroup estimates with the canonical Hilbert-space
structure associated with fBm and Malliavin-calculus arguments. This
refined analysis allows us to control the stochastic terms arising
from the nonlinear drift and to improve the temporal strong
convergence rate.

The main contributions of this work can be summarized as follows:
\begin{itemize}

\item We develop a set of auxiliary estimates specifically tailored to
the temporal discretization in the rough fractional regime
$H\in(0,\frac12)$; see Section \ref{auxi}. A key ingredient is a diagonal--off-diagonal decomposition of the fractional covariance terms, combined with the summability of the discrete fractional covariance sequence, which
yields the required quadrature estimate for the stochastic convolution
in Lemma \ref{stoch_quad_error}.

\item To handle the nonlinear stochastic contribution, we establish a
uniform \\Malliavin-derivative estimate for the semidiscrete solution
(Lemma \ref{lem_malliavin_Xh}) and a cellwise fractional covariance estimate adapted to the temporal grid (Lemma \ref{lem_cellwise_covariance}). These estimates allow the random integrands arising from the Taylor expansion of the nonlinear drift to be treated as divergence integrals. Their combination in Lemma \ref{improved_drift_defect} provides the
crucial control of the accumulated nonlinear drift defect without
losing the temporal order.

\item Building on these auxiliary estimates, we establish the strong
convergence of the stochastic exponential integrator. The resulting
temporal convergence rate depends explicitly on the Hurst parameter
and reaches $H+\frac12$ under maximal spatial regularity.

\end{itemize}

The remainder of the paper is organized as follows.
Section~\ref{mathsetting} introduces the functional and probabilistic
framework and establishes the well-posedness of the problem.
Section~\ref{reg} derives spatial and temporal regularity
estimates for the mild solution. Section~\ref{spa_conv} presents the
finite element spatial discretization and establishes the corresponding
strong convergence rate. Section~\ref{schemes} introduces the fully
discrete stochastic exponential scheme, develops the auxiliary
estimates required for the temporal error analysis, and establishes
the main strong convergence result. Finally,
Section~\ref{numerik} presents numerical simulations illustrating
the theoretical convergence rate  and the estimation of the mean of the solution with different  Hurst parameters $H\in(0,\frac12)$ using the Monte Carlo method.

\section{Functional framework, fractional noise and well-posedness}
\label{mathsetting}

In this section, we introduce the functional and probabilistic
framework required for the analysis and establish the well-posedness
of problem \eqref{pb}.

\subsection{Functional analytic setting}

Let $\Lambda\subset\mathbb R^d$ be a bounded domain with sufficiently
smooth boundary and set $\mathcal H=L^2(\Lambda)$.
Depending on the boundary conditions, we define
$V=
\begin{cases}
H_0^1(\Lambda),
& \text{for homogeneous Dirichlet boundary conditions},\\[1mm]
H^1(\Lambda),
& \text{for Neumann or Robin boundary conditions}.
\end{cases}
$
Identifying $\mathcal H$ with its dual, we obtain the Gelfand triple $V\hookrightarrow\mathcal H\equiv\mathcal H^*
\hookrightarrow V^*$,
where the embeddings are continuous and dense. This variational
framework is standard for second-order elliptic and parabolic
problems; see, for instance, \cite{Fuj,LionsMagenes,Thomee}.

Consider the second-order differential operator
\[
\mathcal Au
=
-\sum_{i,j=1}^{d}
\frac{\partial}{\partial x_i}
\left(
D_{ij}(x)\frac{\partial u}{\partial x_j}
\right)
+
\sum_{i=1}^{d}
q_i(x)\frac{\partial u}{\partial x_i},
\]
where $D_{ij},q_i\in L^\infty(\Lambda)$. We assume that the diffusion
matrix $\mathbf D=(D_{ij})_{1\leq i,j\leq d}$
is uniformly elliptic, namely, there exists $c_1>0$ such that
\begin{equation}
\label{ellipticity}
\sum_{i,j=1}^{d}
D_{ij}(x)\xi_i\xi_j
\geq
c_1|\xi|^2,
\qquad
\xi\in\mathbb R^d,\quad
x\in\overline{\Lambda}.
\end{equation}

The operator $\mathcal A$ is associated with the bilinear form
\begin{equation*}
a(u,v)
=
\int_\Lambda
\left(
\sum_{i,j=1}^{d}
D_{ij}(x)
\frac{\partial u}{\partial x_i}
\frac{\partial v}{\partial x_j}
+
\sum_{i=1}^{d}
q_i(x)
\frac{\partial u}{\partial x_i}v
\right)dx,
\qquad
u,v\in V,
\end{equation*}
with the usual additional boundary contribution in the Robin case.
Under the ellipticity condition \eqref{ellipticity}, the form
$a(\cdot,\cdot)$ satisfies a G{\aa}rding inequality. After a standard
shift of the operator, if necessary, we may assume that the form is
coercive: 
\[
a(v,v)\geq c\|v\|_V^2, \qquad v\in V.
\]
The variational realization of such elliptic operators and its
connection with parabolic problems are discussed in
\cite{Fuj,LionsMagenes,Thomee}. We denote by $A$ the realization of $\mathcal A$ in $\mathcal H$. Under the preceding assumptions, $A$ is sectorial and $-A$ generates a bounded analytic semigroup $S(t)=e^{-tA}$, $t\geq0$.
The theory of sectorial operators and analytic semigroups can be found
in \cite[Chapter 2]{Pazy}, \cite[Sections 1.3--1.4]{Henry}, and
\cite[Appendix A]{DaPratoZabczyk}. In particular, analytic semigroups
provide the smoothing properties required below and allow fractional
powers of $A$ to be introduced in a natural way.
For $\alpha\geq0$, we define $\mathcal H^\alpha:=\mathcal D(A^{\alpha/2})$, $\|v\|_\alpha:=\|A^{\alpha/2}v\|$.
Under standard elliptic regularity assumptions, these fractional
domain spaces are equivalent, up to equivalent norms, to appropriate
Sobolev spaces satisfying the corresponding boundary conditions;
see \cite{Fuj,Henry,Thomee}. The following analytic semigroup estimates will be used repeatedly.

\begin{prp}
\label{semigroup}
Let $S(t)=e^{-tA}$ be the bounded analytic semigroup generated by
$-A$. For every $\delta\geq0$ and $0\leq\gamma\leq1$, there exists
$C>0$ such that
\[
\|A^\delta S(t)\|_{L(\mathcal H)}
\leq
Ct^{-\delta}
\qquad
\text{and} \qquad
\|A^{-\gamma}(I-S(t))\|_{L(\mathcal H)}
\leq
Ct^\gamma,
\qquad
t>0.
\]
Moreover, for every $\delta\geq0$, $A^\delta S(t)v=S(t)A^\delta v$,
$v\in\mathcal D(A^\delta)$, $t\geq0$.
\end{prp}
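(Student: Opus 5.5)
We derive all three assertions from the Dunford--Cauchy integral representation of the analytic semigroup together with the resolvent estimate for the sectorial operator $A$. By the coercivity of $a(\cdot,\cdot)$ obtained after the shift, the spectrum of $A$ lies in a sector $\Sigma_\theta=\{z:|\arg z|\le\theta\}$ with $\theta<\pi/2$, shifted away from the origin. Moreover, $\|(zI-A)^{-1}\|_{L(\mathcal H)}\le M/|z|$ for $z$ outside a slightly larger sector. We then write
\[
S(t)=\frac{1}{2\pi i}\int_\Gamma e^{-tz}(zI-A)^{-1}\,dz ,
\]
where $\Gamma$ is a contour surrounding the spectrum and the arms of $\Gamma$ make an angle $\theta'\in(\theta,\pi/2)$ with the positive real axis. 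Since $0\in\rho(A)$, the fractional powers $A^{-\gamma}$, $\gamma>0$, are bounded and are given by the Balakrishnan/Cauchy formula, and $A^\delta$ is their closed inverse.

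For the first estimate, we insert $A^\delta$ into the contour integral, which gives $A^\delta S(t)=\frac{1}{2\pi i}\int_\Gamma z^\delta e^{-tz}(zI-A)^{-1}dz$. This is justified first for integer $\delta$ and then for general $\delta$ by the functional calculus. We deform $\Gamma$ to the $t$-dependent contour consisting of the two rays $\{re^{\pm i\theta'}: r\ge 1/t\}$ and the arc $|z|=1/t$. On the arc the integrand is bounded by $t^{-\delta}\cdot M t$ and the arc length is of order $1/t$. On the rays we have $|e^{-tz}|=e^{-tr\cos\theta'}$, and the substitution $s=tr$ gives
\[
\int_{1/t}^\infty r^{\delta-1}e^{-tr\cos\theta'}dr=t^{-\delta}\int_1^\infty s^{\delta-1}e^{-s\cos\theta'}ds .
\]
This yields $\|A^\delta S(t)\|\le Ct^{-\delta}$; the case $\delta=0$ is just the boundedness of $S$. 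The commutation $A^\delta S(t)v=S(t)A^\delta v$ for $v\in\mathcal D(A^\delta)$ follows because $S(t)$ commutes with every resolvent $(zI-A)^{-1}$, hence with $A^{-\delta}$, which is a Bochner integral of resolvents. Applying this to $w=A^\delta v$ gives $A^{-\delta}S(t)w=S(t)A^{-\delta}w$, and applying $A^\delta$ to both sides gives the claim.

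For the second estimate, the endpoints are direct. For $\gamma=0$ we use $\|I-S(t)\|\le 1+\sup\|S\|$. For $\gamma=1$ we write
\[
A^{-1}(I-S(t))=\int_0^t S(s)\,ds,
\]
which follows from $\frac{d}{ds}S(s)=-AS(s)$, and its norm is at most $Ct$. For $0<\gamma<1$, interpolating between these two bounds would need a moment inequality, so we argue directly instead: since $I-S(t)=\int_0^t AS(s)ds$ on $\mathcal D(A)$ and $\|A^{1-\gamma}S(s)\|\le Cs^{\gamma-1}$ by the first part,
\[
A^{-\gamma}(I-S(t))v=\int_0^t A^{1-\gamma}S(s)v\,ds,\qquad \Big\|\int_0^t A^{1-\gamma}S(s)v\,ds\Big\|\le C\int_0^t s^{\gamma-1}ds\,\|v\|=\frac{C}{\gamma}t^\gamma\|v\| .
\]
We then extend from $\mathcal D(A)$ to all of $\mathcal H$ by density, since both sides are bounded operators.

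The main technical point is the first estimate for non-integer $\delta$ with a non-self-adjoint $A$. Here we must make sure that the Cauchy-integral definition of $z^\delta$ applied to $A$ agrees with the fractional power $A^\delta$ and that the contour may be deformed. We handle this by using the analyticity of $z^\delta e^{-tz}$ on the sector, the decay of the integrand at infinity, and the standard results of \cite[Chapter 2]{Pazy} and \cite{Henry}, which we cite rather than reprove.
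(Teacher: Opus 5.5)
The paper gives no proof of this proposition; it quotes it from \cite[Chapter 2]{Pazy}, \cite{Henry}, \cite{DaPratoZabczyk}. Your self-contained route is a reasonable alternative, and several parts of it are fine: the commutation argument, the endpoints $\gamma=0,1$, and the formula $A^{-\gamma}(I-S(t))v=\int_0^t A^{1-\gamma}S(s)v\,ds$. The problem is the step you yourself flag as the main technical point, namely $\|A^\delta S(t)\|\le Ct^{-\delta}$ for non-integer $\delta$. This step cannot be handled by citation, because the $t$-dependent contour you deform to is not admissible for $z^\delta e^{-tz}$.

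Once $t<1/\operatorname{dist}(0,\sigma(A))$, the circle $|z|=1/t$ cuts through the region where $\sigma(A)$ lives.
\begin{itemize}
\item The short arc through $+1/t$ stays in the sector, but it leaves outside the contour the part of $\sigma(A)$ in $\{|z|<1/t\}$, for instance the low eigenvalues in the self-adjoint case.
\item The long arc $\{t^{-1}e^{i\eta}:\theta'\le|\eta|\le\pi\}$ keeps $\sigma(A)$ inside, but it passes through $-1/t$. It therefore crosses the branch cut $(-\infty,0]$ of $z^\delta$ and winds around the origin. Cauchy's theorem does not let you move the contour there, and analyticity of $z^\delta e^{-tz}$ on the sector is irrelevant because this arc leaves the sector.
\end{itemize}
The classical radius-$1/t$ contour works only for integer $\delta=n$, where $z^ne^{-tz}$ is entire. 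The gap is not peripheral. Your bound for $0<\gamma<1$ uses the first estimate with $\delta=1-\gamma\in(0,1)$, and the paper uses non-integer exponents throughout, e.g.\ $\delta=\frac{2H+\beta}{2}$ and $\delta=\frac{1-\varepsilon'}{2}$.

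There are two easy repairs.

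\textbf{First repair.} Since $0\in\rho(A)$ and $\delta>0$, drop the arc at radius $1/t$ and let the two rays $\{re^{\pm i\theta'}\}$ run into the origin. To justify this, close the contour with the short arc of radius $\epsilon<\operatorname{dist}(0,\sigma(A))$ through $+\epsilon$. That contour is admissible, and its arc contributes $O(\epsilon^{\delta})\to0$. Your resolvent bound $\|(zI-A)^{-1}\|\le M/|z|$ holds along the whole rays, so
\[
\|A^\delta S(t)\|_{L(\mathcal H)}\le\frac{M}{\pi}\int_0^\infty r^{\delta-1}e^{-tr\cos\theta'}\,dr=\frac{M\,\Gamma(\delta)}{\pi(\cos\theta')^{\delta}}\,t^{-\delta},\qquad t>0.
\]

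\textbf{Second repair (Pazy's route).} Keep your $t$-dependent contour only for an integer $n>\delta$, which gives $\|A^nS(t)\|\le Ct^{-n}$. Then write
\[
A^\delta S(t)=A^{-(n-\delta)}A^nS(t)=\frac{1}{\Gamma(n-\delta)}\int_0^\infty s^{n-\delta-1}A^nS(t+s)\,ds .
\]
This yields $\|A^\delta S(t)\|\le C\,\frac{\mathcal B(n-\delta,\delta)}{\Gamma(n-\delta)}\,t^{-\delta}$.

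With either repair, the rest of your proposal goes through as written.
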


We next introduce the operator spaces used to describe the covariance
structure of the infinite-dimensional noise. Let $U$ and $K$ be
separable Hilbert spaces. We denote by $L(U,K)$ the space of bounded
linear operators from $U$ into $K$ and by $\mathcal L_2(U,K)$ the
space of Hilbert--Schmidt operators. For
$\Phi\in\mathcal L_2(U,K)$,
\[
\|\Phi\|_{\mathcal L_2(U,K)}^2
=
\sum_{j=1}^{\infty}\|\Phi e_j\|_K^2,
\]
where $(e_j)_{j\geq1}$ is any orthonormal basis of $U$. This
definition is independent of the chosen basis.

Let $Q:\mathcal H\rightarrow\mathcal H$ be a nonnegative
self-adjoint operator. We define $L_2^0:=\mathcal L_2\bigl(Q^{1/2}(\mathcal H),\mathcal H\bigr)$ equipped with
\[
\|\Phi\|_{L_2^0}^2
=
\|\Phi Q^{1/2}\|_{\mathcal L_2(\mathcal H)}^2
=
\sum_{j=1}^{\infty}
\|\Phi Q^{1/2}e_j\|^2.
\]
We refer to \cite{DaPratoZabczyk} for the theory of covariance
operators, Hilbert--Schmidt operators and infinite-dimensional
stochastic integration. Having fixed the deterministic operator framework, we now introduce
the fractional Gaussian noise driving the equation.

\subsection{Fractional Brownian motion and the canonical Hilbert space}

Let $H\in(0,\frac12)$. A real-valued fractional Brownian motion
$\{\beta^H(t)\}_{t\in[0,T]}$ is a centered Gaussian process with
covariance
\[
R_H(t,s)
:=
\mathbb E\bigl[\beta^H(t)\beta^H(s)\bigr]
=
\frac12
\left(
t^{2H}+s^{2H}-|t-s|^{2H}
\right),
\qquad
s,t\in[0,T].
\]
General properties of fractional Brownian motion, including
self-similarity, stationary increments and its Gaussian structure,
can be found in \cite{Mis,Nualart,DecreusefondUstunel}. To introduce the deterministic Wiener integral, let $\mathcal E$
denote the space of real-valued step functions on $[0,T]$. For $\displaystyle f
=\sum_{i=0}^{n-1}f_i\mathbf 1_{[t_i,t_{i+1})}$,
$0=t_0<t_1<\cdots<t_n=T$,
set
\[
I_1(f)
:=
\sum_{i=0}^{n-1}
f_i
\bigl(
\beta^H(t_{i+1})-\beta^H(t_i)
\bigr).
\]
This is the natural first-order integral associated with the
increments of $\beta^H$; see \cite{Bar,Mis}. The canonical Hilbert
space $\mathfrak H_H$ associated with $\beta^H$ is defined as the
completion of $\mathcal E$ with respect to $\displaystyle \langle f,g\rangle_{\mathfrak H_H}
:=
\mathbb E\bigl[I_1(f)I_1(g)\bigr]$.
Consequently,
$\|f\|_{\mathfrak H_H}^2=\mathbb E|I_1(f)|^2$.
The mapping $I_1$ therefore extends by isometry from
$\mathfrak H_H$ into the Gaussian space generated by the fractional
Brownian motion; see \cite{Nualart,DecreusefondUstunel,Mis}.

For $H<\frac12$, the canonical integrand space is more restrictive
than in the standard Brownian case. In particular, its structure
reflects the singular covariance kernel generated by the roughness
of the fractional Brownian paths. A useful explicit characterization
was obtained by Bardina and Jolis \cite{Bar}. For suitable
deterministic functions $f,g:[0,T]\rightarrow\mathbb R$,
\begin{align}
\langle f,g\rangle_{\mathfrak H_H}
&=
\frac{H(1-2H)}{2}
\int_0^T\int_0^T
(f(u)-f(v))(g(u)-g(v))
|u-v|^{2H-2}\,du\,dv
\nonumber\\
&\quad+
H\int_0^T
f(s)g(s)
\left(
s^{2H-1}+(T-s)^{2H-1}
\right)ds.
\label{canonical_inner_product}
\end{align}

The double integral in \eqref{canonical_inner_product} measures
fractional increments of the integrand, whereas the second term
accounts for the interaction with the endpoints of $[0,T]$. This
representation is particularly well suited to the cellwise estimates
used later in the temporal convergence analysis. More generally, if $V$ is a separable Hilbert space, we denote by
$\mathfrak H_H(V)$ the $V$-valued canonical Hilbert space obtained by
completion of the $V$-valued step functions with respect to the
corresponding covariance norm; see \cite{Nualart,Mis}. The scalar
identity \eqref{canonical_inner_product} extends componentwise, and
for suitable $g:[0,T]\rightarrow V$ one has
\begin{align}
\|g\|_{\mathfrak H_H(V)}^2
&=
\frac{H(1-2H)}{2}
\int_0^T\int_0^T
\|g(u)-g(v)\|_V^2
|u-v|^{2H-2}\,du\,dv
\nonumber\\
&\quad+
H\int_0^T
\|g(s)\|_V^2
\left(
s^{2H-1}+(T-s)^{2H-1}
\right)ds.
\label{canonical_V_norm}
\end{align}

We now introduce the Hilbert-space-valued fractional Brownian motion.
Let $(q_j,e_j)_{j\geq1}$ denote the eigenpairs of $Q$. The
fractional $Q$-Brownian motion is represented formally as
\[
B_Q^H(t)
=
\sum_{j=1}^{\infty}
q_j^{1/2}\beta_j^H(t)e_j,
\qquad
t\in[0,T],
\]
where $(\beta_j^H)_{j\geq1}$ is a sequence of independent
real-valued fractional Brownian motions with Hurst parameter $H$.
The corresponding infinite-dimensional construction is the
fractional analogue of the standard $Q$-Wiener expansion used in
Hilbert-space stochastic analysis; see
\cite{DaPratoZabczyk,Duna,Dunb}.

For a deterministic function
$\Phi:[0,T]\rightarrow L_2^0$ belonging to the appropriate canonical
integrand space, we define
\begin{equation}
\label{QfBm_integral}
\int_0^T\Phi(s)\,dB_Q^H(s)
:=
\sum_{j=1}^{\infty}
\int_0^T
\Phi(s)Q^{1/2}e_j\,d\beta_j^H(s),
\end{equation}
whenever the series converges in $L^2(\Omega;\mathcal H)$.
Different constructions and properties of stochastic integration
with respect to fractional Brownian motion in finite- and
infinite-dimensional spaces can be found in
\cite{Alo,Duna,Dunb,Mis,DecreusefondUstunel}.
Combining the scalar covariance representation
\eqref{canonical_inner_product} with the series expansion
\eqref{QfBm_integral} yields the following $L_2^0$-valued identity.

\begin{lem}
\label{fracint}
Let $H\in(0,\frac12)$ and let
$\Phi:[0,T]\rightarrow L_2^0$ be a deterministic function belonging
to the canonical integrand space associated with $B_Q^H$. Then
\[
\begin{aligned}
\mathbb E
\left\|
\int_0^T\Phi(s)\,dB_Q^H(s)
\right\|^2
&=
\frac{H(1-2H)}{2}
\int_0^T\int_0^T
\|\Phi(u)-\Phi(v)\|_{L_2^0}^2
|u-v|^{2H-2}\,du\,dv
\nonumber\\
&\quad+
H\int_0^T
\|\Phi(s)\|_{L_2^0}^2
\left(
s^{2H-1}+(T-s)^{2H-1}
\right)ds.
\label{fractional_isometry}
\end{aligned}
\]
\end{lem}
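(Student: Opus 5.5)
The argument reduces the $\mathcal H$-valued statement to the scalar identity \eqref{canonical_inner_product}, one mode at a time, using the independence of the $\beta_j^H$. For each $j\geq1$ set $f_j(s):=\Phi(s)Q^{1/2}e_j$, an $\mathcal H$-valued deterministic function. By \eqref{QfBm_integral} the stochastic integral is the $L^2(\Omega;\mathcal H)$-limit of the partial sums $\sum_{j\le N}\int_0^T f_j(s)\,d\beta_j^H(s)$. The cross terms $j\neq k$ vanish, because $\int_0^T f_j\,d\beta_j^H$ and $\int_0^T f_k\,d\beta_k^H$ are independent and centered. Hence
\[
\mathbb E\Bigl\|\int_0^T\Phi(s)\,dB_Q^H(s)\Bigr\|^2=\sum_{j=1}^\infty \mathbb E\Bigl\|\int_0^T f_j(s)\,d\beta_j^H(s)\Bigr\|^2 .
\]
Independence and centering hold for step functions directly and pass to the limit by the isometry of $I_1$.

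Next I handle each $\mathcal H$-valued Wiener integral by expanding in an orthonormal basis $(\varepsilon_k)$ of $\mathcal H$. Writing $\langle \int_0^T f_j\,d\beta_j^H,\varepsilon_k\rangle=I_1(\langle f_j(\cdot),\varepsilon_k\rangle)$ and applying Parseval gives
\[
\mathbb E\Bigl\|\int_0^T f_j\,d\beta_j^H\Bigr\|^2=\sum_k \bigl\|\langle f_j(\cdot),\varepsilon_k\rangle\bigr\|_{\mathfrak H_H}^2 .
\]
Applying \eqref{canonical_inner_product} with $f=g=\langle f_j,\varepsilon_k\rangle$, and summing over $k$ with Tonelli (all integrands are nonnegative, since the weight $|u-v|^{2H-2}$ and the factor $H(1-2H)/2$ are positive for $H<\tfrac12$), yields exactly \eqref{canonical_V_norm} with $V=\mathcal H$ and $g=f_j$. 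This is the componentwise extension already stated in the paper, so I may simply invoke it.

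Finally I sum over $j$, again using Tonelli. Since
\[
\sum_j\|f_j(u)-f_j(v)\|^2=\|(\Phi(u)-\Phi(v))Q^{1/2}\|_{\mathcal L_2(\mathcal H)}^2=\|\Phi(u)-\Phi(v)\|_{L_2^0}^2
\]
and $\sum_j\|f_j(s)\|^2=\|\Phi(s)\|_{L_2^0}^2$, the double integral and the boundary term collapse to the claimed expression. This also shows that the series in \eqref{QfBm_integral} converges in $L^2(\Omega;\mathcal H)$ exactly when the right-hand side is finite. That is what "belonging to the canonical integrand space" should mean here, so the hypothesis guarantees that the identity is finite and well defined.

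The main obstacle is justifying \eqref{canonical_inner_product} on the whole integrand class rather than only on step functions. The Bardina--Jolis formula holds for "suitable" $f$, and the completion $\mathfrak H_H$ may contain elements for which one must check that the right-hand side of \eqref{canonical_inner_product} is the continuous extension of the norm. I would handle this by proving the identity first for $L_2^0$-valued step functions $\Phi$, where everything is a finite computation from $R_H$. I would then approximate a general $\Phi$ in the norm defined by the right-hand side and use the isometry together with Fatou's lemma and monotone convergence to pass to the limit. The interchanges of sums and integrals are harmless because every term is nonnegative.
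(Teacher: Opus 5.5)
Your proposal is correct and takes the same route the paper indicates: it reduces to the scalar identity \eqref{canonical_inner_product} mode by mode through the series \eqref{QfBm_integral}, kills the cross terms by independence, and collapses the sums into $L_2^0$ norms. The paper gives only a one-sentence justification before the lemma. Your independence, Parseval/Tonelli, and step-function density arguments fill in details the paper leaves implicit; the density step is asserted but not proved, and you correctly flag it as the point needing care.
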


Lemma~\ref{fracint} is sufficient for deterministic stochastic
convolutions. In the refined temporal analysis, however, the
nonlinear Taylor expansion generates random coefficients inside
fractional stochastic integrals. To handle such terms, we need the
Malliavin derivative and the associated divergence integral.

\subsection{Malliavin derivative and divergence integral}

We recall the elements of Malliavin calculus that will be used below.
The general Gaussian theory is presented in \cite{Nualart}, while its
formulation for fractional Brownian motion is developed in
\cite{DecreusefondUstunel,Mis}. Let $\mathcal S$ denote the class of smooth cylindrical random
variables of the form $F=f\bigl(B^H(h_1),\ldots,B^H(h_n)\bigr)$,
where $h_1,\ldots,h_n\in\mathfrak H_H$ and
$f\in C_b^\infty(\mathbb R^n)$. The Malliavin derivative of $F$ with
respect to $B^H$ is defined by
\[
D^H F
=
\sum_{j=1}^{n}
\partial_j f
\bigl(
B^H(h_1),\ldots,B^H(h_n)
\bigr)h_j.
\]
The operator $D^H$ is closable in $L^p(\Omega)$ for $p\geq1$;
see \cite{Nualart,Mis}. This allows one to define the associated
Sobolev--Malliavin spaces. For $p\geq1$, we denote by $\mathbb D_H^{1,p}$ the closure of
$\mathcal S$ with respect to
\[
\|F\|_{\mathbb D_H^{1,p}}
=
\left(
\mathbb E|F|^p
+
\mathbb E
\|D^H F\|_{\mathfrak H_H}^p
\right)^{1/p}.
\]
If $V$ is a separable Hilbert space, the corresponding space
$\mathbb D_H^{1,p}(V)$ is defined analogously and $D^H F
\in
L^p\bigl(\Omega;\mathfrak H_H(V)\bigr)$.
The spaces $\mathbb D_H^{1,p}(\mathcal H)$ and
$\mathbb D_H^{1,p}(L_2^0)$ will occur naturally in the estimates of
the Malliavin derivatives of the semidiscrete solution and of the
random operator coefficients appearing in the local defects.

The divergence operator $\delta^H$ is defined as the adjoint of
$D^H$. More precisely,
$u\in L^2(\Omega;\mathfrak H_H)$ belongs to
$\operatorname{Dom}(\delta^H)$ if there exists $C_u>0$ such that
\[
\left|
\mathbb E
\langle D^H F,u\rangle_{\mathfrak H_H}
\right|
\leq
C_u
\|F\|_{L^2(\Omega)}
\]
for all $F\in\mathbb D_H^{1,2}$. In this case,
$\delta^H(u)\in L^2(\Omega)$ is uniquely characterized by
\[
\mathbb E\bigl[F\delta^H(u)\bigr]
=
\mathbb E
\left[
\langle D^H F,u\rangle_{\mathfrak H_H}
\right],
\qquad
F\in\mathbb D_H^{1,2}.
\]
This duality definition is the fractional counterpart of the
standard divergence operator in Gaussian Malliavin calculus; see
\cite[Definition 1.3.1]{Nualart} and \cite{Mis}. For deterministic $u\in\mathfrak H_H$, the divergence coincides with
the corresponding Wiener integral:
\[
\delta^H(u)
=
B^H(u)
=
\int_0^T u(s)\,dB^H(s).
\]
For random $u$, the quantity $\delta^H(u)$ is called the divergence,
or Skorohod, integral and is written
\[
\delta^H(u)
=
\int_0^T u(s)\,\delta B^H(s).
\]
The relation between the fractional divergence operator and the
classical Skorohod integral can also be described through the transfer
operator associated with the fBm; see \cite{Nualart,Mis}. In
particular, Mishura shows that the divergence operator can be
transported to the divergence operator associated with the underlying
standard Wiener process.

A key property used in our analysis is the continuity of the
divergence operator on the first-order Sobolev--Malliavin space.
For $u\in\mathbb D_H^{1,2}(\mathfrak H_H)$,
\[
\mathbb E|\delta^H(u)|^2
\leq
\mathbb E\|u\|_{\mathfrak H_H}^2
+
\mathbb E
\|D^H u\|_{\mathfrak H_H\otimes\mathfrak H_H}^2.
\]
This estimate follows from the general second-moment identity for
the divergence operator; see
\cite[Proposition 1.3.1]{Nualart}, and from the corresponding
fractional formulation in \cite{Mis}.
The estimate extends componentwise to Hilbert-space-valued
integrands. Hence, if $V$ is a separable Hilbert space and
$u\in\mathbb D_H^{1,2}(\mathfrak H_H(V))$, then
\begin{equation}
\label{divergence_estimate}
\|\delta^H(u)\|_{L^2(\Omega;V)}^2
\leq
C
\left(
\mathbb E
\|u\|_{\mathfrak H_H(V)}^2
+
\mathbb E
\|D^H u\|_{\mathfrak H_H\otimes\mathfrak H_H(V)}^2
\right).
\end{equation}
Thus, unlike the deterministic Wiener integral, a divergence integral
with random integrand requires control not only of the canonical norm
of the integrand itself but also of its Malliavin derivative. This is
precisely the mechanism that will be used later for random kernels
arising in the accumulated nonlinear drift defect.

We shall also repeatedly use the classical Beta-function identity
\cite{AbramowitzStegun}:
\[
\mathcal B(x,y)
=
\int_0^1
s^{x-1}(1-s)^{y-1}\,ds
=
\frac{\Gamma(x)\Gamma(y)}{\Gamma(x+y)},
\qquad
x,y>0.
\]
Consequently, after the change of variables
$s=t_1+(t_2-t_1)r$,
\begin{equation}
\label{beta_identity}
\int_{t_1}^{t_2}
(t_2-s)^{x-1}
(s-t_1)^{y-1}\,ds
=
\mathcal B(x,y)
(t_2-t_1)^{x+y-1},
\qquad
0\leq t_1<t_2\leq T.
\end{equation}

The preceding analytic and probabilistic tools allow us to formulate
the assumptions under which the stochastic evolution problem is
studied.

\subsection{Assumptions and well-posedness}

To ensure the existence and the \\
uniqueness of solution of \eqref{pb} and for the purpose of the convergence analysis, we make the following assumptions.

\begin{ass}[Initial data]
\label{initdata}
The initial datum
$X_0:\Omega\rightarrow\mathcal H$ is
$\mathcal F_0/\mathcal B(\mathcal H)$-measurable and satisfies $X_0\in L^2\left(\Omega;\mathcal D\left(A^{\frac{2H+\beta}{2}}\right)\right)$, with $1-2H <\beta\leq 1$.
\end{ass}

\begin{ass}[Additive fractional noise]
\label{noise}
The noise coefficient $\phi$ is deterministic and satisfies
\[
\|A^\kappa\phi\|_{L_2^0}\leq C_{\kappa},
\qquad 0\leq\kappa\leq1,
\]
for some constant $C_{\kappa}>0$.
\end{ass}
\begin{ass}[Nonlinearity]
\label{non}
The mapping $F:\mathcal H\rightarrow\mathcal H$ belongs to
$C_b^2(\mathcal H;\mathcal H)$. More precisely, there exists
$L>0$ such that
\begin{equation}
\label{ass_non_lin}
\|F(0)\|\leq L,
\qquad
\sup_{v\in\mathcal H}
\|F'(v)\|_{L(\mathcal H)}
\leq L,
\qquad
\sup_{v\in\mathcal H}
\|F''(v)\|_{L^{(2)}(\mathcal H;\mathcal H)}
\leq L.
\end{equation}
By the mean-value theorem in Banach spaces,
\eqref{ass_non_lin} implies the global Lipschitz estimate
\begin{equation}
\label{ass_non_lin_lipschitz}
\|F(u)-F(v)\|
\leq
L\|u-v\|,
\qquad
u,v\in\mathcal H,
\end{equation}
and hence
\begin{equation}
\label{ass_non_lin3}
\|F(v)\|
\leq
L(1+\|v\|),
\qquad
v\in\mathcal H.
\end{equation}
The boundedness of $F''$ will be required later when differentiating
the random Taylor coefficient with respect to the underlying
fractional Gaussian noise.
\end{ass}

We are now ready to state the existence and uniqueness result for the
mild solution of \eqref{pb}, given by \eqref{mild}.

\begin{thm}
\label{well_poss_pbfrac}
Under Assumptions~\ref{initdata}--\ref{non}, the SPDE \eqref{pb}
admits a unique mild solution $X\in C\bigl([0,T];L^2(\Omega;\mathcal H)\bigr)$ given by \eqref{mild}. Moreover,
\begin{equation}
\label{mean_square_bound}
\sup_{0\leq t\leq T}
\mathbb E\|X(t)\|^2
<\infty.
\end{equation}
\end{thm}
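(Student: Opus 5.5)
The plan is a Banach fixed-point argument in the space $\mathcal X_T:=C\bigl([0,T];L^2(\Omega;\mathcal H)\bigr)$ of $\mathcal F_t$-adapted processes, with the stochastic convolution treated as a given, deterministic-integrand Gaussian forcing term. \emph{First step:} we show that $O(t):=\int_0^t S(t-s)\phi\,dB_Q^H(s)$ is well defined, uniformly bounded in $L^2(\Omega;\mathcal H)$, and mean-square continuous. For fixed $t$ we apply Lemma~\ref{fracint} on $[0,t]$ with $\Phi(s)=S(t-s)\phi$.

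For the boundary term we use $\|S(t-s)\phi\|_{L_2^0}\leq C\|\phi\|_{L_2^0}$, which is Assumption~\ref{noise} with $\kappa=0$. This gives
\[
H\int_0^t\bigl(s^{2H-1}+(t-s)^{2H-1}\bigr)ds\leq Ct^{2H}.
\]
For the double integral, take $u>v$ and write
\[
S(t-v)-S(t-u)=S(t-u)\bigl(S(u-v)-I\bigr).
\]
Inserting $A^{-\gamma}A^{\gamma}$ and using Proposition~\ref{semigroup} with $\gamma=\kappa$ gives
\[
\|\Phi(u)-\Phi(v)\|_{L_2^0}\leq C|u-v|^{\kappa}\|A^\kappa\phi\|_{L_2^0}.
\]
The kernel $|u-v|^{2\kappa+2H-2}$ is then integrable as soon as $\kappa>\frac12-H$, which is admissible since $\kappa\le1$. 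So the double integral is bounded by $Ct^{2\kappa+2H}$, and $\sup_t\mathbb E\|O(t)\|^2<\infty$.

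Mean-square continuity of $t\mapsto O(t)$ follows from the same isometry applied to $O(t+h)-O(t)$. We split it into $\int_t^{t+h}$ and $(S(h)-I)O(t)$. The second piece tends to $0$ by strong continuity of $S$ together with dominated convergence over the series \eqref{QfBm_integral}.

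\emph{Second step:} define the map
\[
\mathcal T(Y)(t)=S(t)X_0+\int_0^tS(t-s)F(Y(s))\,ds+O(t).
\]
We check that $\mathcal T$ maps $\mathcal X_T$ into itself. The term $S(t)X_0$ is continuous in $L^2(\Omega;\mathcal H)$ by strong continuity of $S$ and dominated convergence, since $X_0\in L^2(\Omega;\mathcal H)$ by Assumption~\ref{initdata}. The drift convolution is bounded and continuous by the linear growth \eqref{ass_non_lin3} and boundedness of $S$. Adaptedness holds because $O(t)$ is built from the noise increments on $[0,t]$.

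\emph{Third step:} contraction. By \eqref{ass_non_lin_lipschitz} and Cauchy--Schwarz,
\[
\mathbb E\|\mathcal T(Y_1)(t)-\mathcal T(Y_2)(t)\|^2\leq CL^2\,t\int_0^t\mathbb E\|Y_1(s)-Y_2(s)\|^2ds.
\]
We use the weighted norm $\sup_t e^{-\lambda t}\bigl(\mathbb E\|Y(t)\|^2\bigr)^{1/2}$ with $\lambda$ large (alternatively, iterate $\mathcal T$ $n$ times to get the factor $(CT)^n/n!$). Either way $\mathcal T$ becomes a strict contraction, and Banach's theorem gives a unique fixed point. This fixed point is the mild solution \eqref{mild}. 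The bound \eqref{mean_square_bound} is automatic from membership in $\mathcal X_T$, or can be obtained directly via Gronwall's lemma from the three estimates above.

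The main obstacle is the first step, specifically the double integral in Lemma~\ref{fracint}. For $H<\frac12$ the kernel $|u-v|^{2H-2}$ is not integrable, so the increment of $\Phi$ must supply H\"older regularity of order strictly greater than $\frac12-H$. This is exactly where the smoothing of $\phi$ from Assumption~\ref{noise} (with $\kappa>\frac12-H$) is consumed. It also has to be checked that $S(t-\cdot)\phi$ actually belongs to the canonical integrand space. For that we would approximate by step functions and use the same estimate to show the approximations are Cauchy in $\mathfrak H_H(L_2^0)$.
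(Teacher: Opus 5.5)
Your proposal is correct and follows essentially the same route as the paper: a Banach fixed-point argument in $C([0,T];L^2(\Omega;\mathcal H))$ with an exponentially weighted sup-norm, where the stochastic convolution is controlled through Lemma~\ref{fracint} via the H\"older increment bound $\|(S(u-v)-I)\phi\|_{L_2^0}\le C|u-v|^{\kappa}\|A^{\kappa}\phi\|_{L_2^0}$ with $\kappa>\frac12-H$ (the paper takes $\kappa=\beta/2$ with $\beta>1-2H$), together with the $Ct^{2H}$ bound on the boundary term. Your extra care about mean-square continuity, adaptedness, and membership of $S(t-\cdot)\phi$ in the canonical integrand space fills in points the paper only asserts. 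Reading off \eqref{mean_square_bound} directly from continuity on the compact interval $[0,T]$ is a valid shortcut for the paper's Gronwall step.
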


\begin{proof}

We prove the existence and uniqueness of the mild solution by means of
the Banach fixed-point theorem. Let $\mathcal{X}_T:=C\left([0,T];L^2(\Omega;\mathcal{H})\right)$, equipped with the norm $\|x\|_{\mathcal{X}_T}
:= \sup\limits_{0\leq t\leq T} \|x(t)\|_{L^2(\Omega;\mathcal{H})}$. For $u>0$, we also introduce the weighted norm
$\|x\|_{\mathcal{X}_T,u}:=\sup\limits_{0\leq t\leq T}e^{-ut}\|x(t)\|_{L^2(\Omega;\mathcal{H})}$.
Since $T<\infty$, the norms
$\|\cdot\|_{\mathcal{X}_T}$ and
$\|\cdot\|_{\mathcal{X}_T,u}$ are equivalent. Indeed,
\[
e^{-uT}
\|x\|_{\mathcal{X}_T}
\leq
\|x\|_{\mathcal{X}_T,u}
\leq
\|x\|_{\mathcal{X}_T}.
\]
Define the mapping $\xi:\mathcal{X}_T\longrightarrow\mathcal{X}_T$ by
\begin{equation}
\label{op}
(\xi x)(t)
=
S(t)X_0
+
\int_0^t S(t-s)F(x(s))\,ds
+
\int_0^t S(t-s)\phi\,dB_Q^H(s).
\end{equation}
The proof is divided into two steps.

\textbf{Step 1: $\xi$ maps $\mathcal{X}_T$ into itself.}

Let $x\in\mathcal{X}_T$. Using \eqref{op}, the triangle inequality and $(a+b+c)^2 \leq 3(a^2+b^2+c^2)$,
we obtain
\begin{eqnarray*}
	\mathbb{E}[\left\|(\xi x)(t)\right\|^2]
	&\leq& 3\mathbb{E}\left\|S(t)x_0\right\|^2+\mathbb{E}\left\|\int_0^tS(t-s)F(x(s))ds\right\|^2+ 3\mathbb{E}\left\|\int_0^t S(t-s)\phi dB^H(s) \right\|^2\nonumber\\
	&=:& 3\sum_{i=1}^3 I_i(t).
	\end{eqnarray*}

We estimate the three terms separately. By the stability property of the semigroup $S(t)$, see
Proposition \ref{semigroup} with $\delta=0$, $\|S(t)\|_{L(\mathcal{H})}
\leq C$. Therefore,
\begin{align}
I_1(t) = \mathbb{E}\left[\|S(t)X_0\|^2\right]\nonumber \leq C^2\mathbb{E}\left[\|X_0\|^2\right]<\infty.
\label{I1_est}
\end{align}
By the Cauchy--Schwarz inequality, the boundedness of the semigroup and the linear growth condition
\eqref{ass_non_lin3}, we have
\[
\begin{aligned}
I_2(t) &= \mathbb{E}\left[\left\|\int_0^tS(t-s)F(x(s))\,ds\right\|^2\right]\nonumber\\
&\leq C^2t\int_0^t\mathbb{E}\left[\|F(x(s))\|^2\right]ds\nonumber\\
&\leq C t\int_0^t\left(1+\mathbb{E}\|x(s)\|^2\right)ds\nonumber\\
&\leq C t\left(t+t\|x\|_{\mathcal{X}_T}^2\right) \leq C_T\left(1+\|x\|_{\mathcal{X}_T}^2\right),
\label{I2_est}
\end{aligned}
\]
where $C_T>0$ may depend on $T$ but is independent of $x$.\\
We now estimate $I_3(t)$. Applying Lemma \ref{fracint}, we obtain
\[
\begin{aligned}
I_3(t)
&=
\mathbb{E}
\left[
\left\|
\int_0^t
S(t-s)\phi\,dB_Q^H(s)
\right\|^2
\right]
\nonumber
\\
&\leq
\frac{H(1-2H)}{2}
\int_0^t
\int_0^t
\|
S(t-u)\phi
-
S(t-v)\phi
\|_{L_2^0}^2
|u-v|^{2H-2}
\,dv\,du
\nonumber
\\
&\quad
+
H
\int_0^t
\|S(t-s)\phi\|_{L_2^0}^2
\left(
s^{2H-1}
+
(t-s)^{2H-1}
\right)ds
\nonumber
\\
&=:I_{31}(t)+I_{32}(t).
\label{I3split}
\end{aligned}
\]
For $0\leq v\leq u\leq t$, the semigroup property yields $S(t-u)\phi-S(t-v)\phi=S(t-u)\left(I-S(u-v)\right)\phi$.
Consequently, 
\[
\|S(t-u)\phi-S(t-v)\phi\|_{L_2^0}\leq\|S(t-u)\|_{L(\mathcal{H})}\|A^{-\frac{\beta}{2}}\left(I-S(u-v)\right)\|_{L(\mathcal{H})}\|A^{\frac{\beta}{2}}\phi\|_{L_2^0}.
\]
Using Proposition \ref{semigroup} with
$\gamma=\frac{\beta}{2}$,
together with Assumption \ref{noise}, gives $ \displaystyle
\|S(t-u)\phi-S(t-v)\phi
\|_{L_2^0}
\leq
C
(u-v)^{\frac{\beta}{2}}
\|
A^{\frac{\beta}{2}}\phi
\|_{L_2^0}$.
Similarly, for $0\leq u\leq v\leq t$,$\displaystyle
\|
S(t-u)\phi-S(t-v)\phi
\|_{L_2^0}
\leq
C
(v-u)^{\frac{\beta}{2}}
\|
A^{\frac{\beta}{2}}\phi
\|_{L_2^0}$.
Therefore,
\begin{align*}
I_{31}(t)
&\leq
C
\|
A^{\frac{\beta}{2}}\phi
\|_{L_2^0}^2
\int_0^t
\int_0^t
|u-v|^{2H+\beta-2}
\,dv\,du.
\end{align*}
Since $1-2H<\beta$, we have $2H+\beta-1>0$. Hence the double integral is finite. More precisely,
\begin{align*}
\int_0^t
\int_0^t
|u-v|^{2H+\beta-2}
\,dv\,du
&=
2
\int_0^t
\int_0^u
(u-v)^{2H+\beta-2}
\,dv\,du
\nonumber
\\
&=
\frac{2}
{2H+\beta-1}
\int_0^t
u^{2H+\beta-1}
\,du
\nonumber
\\
&=
\frac{2t^{2H+\beta}}
{(2H+\beta-1)(2H+\beta)}.
\end{align*}
Therefore,
\begin{equation}
I_{31}(t)
\leq
C
t^{2H+\beta}
\|
A^{\frac{\beta}{2}}\phi
\|_{L_2^0}^2
<\infty.
\label{I31_est}
\end{equation}
We now estimate $I_{32}(t)$. Using again the boundedness of the
semigroup and Assumption \ref{noise} with $\kappa=0$, we obtain
\begin{align}
I_{32}(t)
&=
H
\int_0^t
\|S(t-s)\phi\|_{L_2^0}^2
\left(
s^{2H-1}
+
(t-s)^{2H-1}
\right)ds
\nonumber
\\
&\leq
C H
\|\phi\|_{L_2^0}^2
\int_0^t
\left(
s^{2H-1}
+
(t-s)^{2H-1}
\right)ds
\nonumber
\\
&=
C
\|\phi\|_{L_2^0}^2
t^{2H}.
\label{I32_est}
\end{align}
Combining \eqref{I31_est} and \eqref{I32_est}, it follows that
\[
I_3(t)
\leq
C
\left(
t^{2H+\beta}
\|
A^{\frac{\beta}{2}}\phi
\|_{L_2^0}^2
+
t^{2H}
\|\phi\|_{L_2^0}^2
\right).
\]

Therefore, for every $t\in[0,T]$,
\begin{align}
\mathbb{E}
\left[
\|(\xi x)(t)\|^2
\right]
&\leq
C_T
\left(
1
+
\mathbb{E}\|X_0\|^2
+
\|x\|_{\mathcal{X}_T}^2
+
\|\phi\|_{L_2^0}^2
+
\left\|
A^{\frac{\beta}{2}}\phi
\right\|_{L_2^0}^2
\right).
\label{xi_final_step1}
\end{align}
Hence, by Assumptions~\ref{initdata} and~\ref{noise}, and since
$x\in\mathcal{X}_T$, all the terms on the right-hand side of
\eqref{xi_final_step1} are finite. Therefore,
\[
\sup_{0\leq t\leq T}
\mathbb{E}
\left[
\|(\xi x)(t)\|^2
\right]
<\infty.
\]
Moreover, the strong continuity of $S(t)$, the continuity of the
deterministic convolution, and the mean-square continuity of the
fractional stochastic convolution imply that $\xi x\in
C\left([0,T];L^2(\Omega;\mathcal{H})\right)$. Thus, $\xi(\mathcal{X}_T)\subset\mathcal{X}_T$.

\textbf{Step 2: $\xi$ is a contraction for a suitable equivalent norm.}

Let $x,y\in\mathcal{X}_T$. Since the stochastic convolution is additive,
it cancels when considering the difference
$(\xi x)(t)-(\xi y)(t)$. Thus,
\[
(\xi x)(t)-(\xi y)(t)
=
\int_0^t
S(t-s)
\left(
F(x(s))-F(y(s))
\right)ds.
\]
Using the boundedness of $S(t)$, the global Lipschitz condition
\eqref{ass_non_lin_lipschitz} and the definition of the weighted norm $\|x(s)-y(s)\|_{L^2(\Omega;\mathcal{H})}\leq e^{us}\|x-y\|_{\mathcal{X}_T,u}$, we have
\begin{align*}
\|(\xi x)(t)-(\xi y)(t)\|_{L^2(\Omega;\mathcal{H})}
&\leq\int_0^t\|S(t-s)\left(F(x(s))-F(y(s))\right)\|_{L^2(\Omega;\mathcal{H})}ds\nonumber\\
&\leq CL\int_0^t\|x(s)-y(s)\|_{L^2(\Omega;\mathcal{H})}ds\nonumber\\
&\leq CL\left(\int_0^te^{us}ds\right)\|x-y\|_{\mathcal{X}_T,u}\nonumber\\
&=\frac{CL}{u}\left(e^{ut}-1\right)\|x-y\|_{\mathcal{X}_T,u}.
\end{align*}
Multiplying by $e^{-ut}$ and taking the supremum over $t\in[0,T]$, we obtain
\begin{align*}
\|\xi x-\xi y\|_{\mathcal{X}_T,u}
&\leq \frac{CL}{u}\left(1-e^{-uT}\right)\|x-y\|_{\mathcal{X}_T,u}\nonumber\\
&\leq\frac{CL}{u}\|x-y\|_{\mathcal{X}_T,u}.
\end{align*}
We choose $u>CL$. Then $\frac{CL}{u}<1$ and therefore $\xi$ is a contraction on $\mathcal{X}_T$ with respect to the equivalent norm $\|\cdot\|_{\mathcal{X}_T,u}$. By the Banach fixed-point theorem, there exists a unique $X\in\mathcal{X}_T$ such that $\xi X=X$. Consequently, $X$ is the unique mild solution of \eqref{pb}. Finally, applying the estimates obtained in Step 1 to the fixed point
$X$ and using the linear growth of $F$, followed by Gronwall's
inequality if necessary, yields
\[
\sup_{0\leq t\leq T}\mathbb{E}\left[\|X(t)\|^2\right]
\leq
C_T\left(1+\mathbb{E}\|X_0\|^2+\|\phi\|_{L_2^0}^2+\left\|A^{\frac{\beta}{2}}\phi\right\|_{L_2^0}^2\right)<\infty.
\]

This completes the proof.
\end{proof}

\section{Regularity of the mild solution}
\label{reg}	
In all that follows, $C$ denotes a generic positive constant that may change from line to line. We also let $\varepsilon>0$ be sufficiently small whenever needed. The following result establishes the spatial and temporal regularity of the mild solution \eqref{mild}. These regularity estimates will be used in the strong convergence analysis.

\begin{thm}
\label{regfrac}
Let $H\in\left(0,\frac12\right)$ and let
$1-2H<\beta\leq 1$.
Under Assumptions \ref{initdata}--\ref{non}, the unique mild solution
$X$ of \eqref{pb}, given by \eqref{mild}, satisfies
\begin{equation}
\label{sparegfrac}
\sup_{0\leq t\leq T}
\left\|
A^{\frac{2H+\beta}{2}}X(t)
\right\|_{L^2(\Omega;\mathcal H)}
\leq
C
\left(
1+
\left\|
A^{\frac{2H+\beta}{2}}X_0
\right\|_{L^2(\Omega;\mathcal H)}
\right).
\end{equation}
Moreover, for every $0\leq t_1<t_2\leq T$,
\begin{equation}
\label{timeregfrac}
\left\|
X(t_2)-X(t_1)
\right\|_{L^2(\Omega;\mathcal H)}
\leq
C
(t_2-t_1)^H
\left(
1+
\left\|
A^{\frac{2H+\beta}{2}}X_0
\right\|_{L^2(\Omega;\mathcal H)}
\right).
\end{equation}
Analogous estimates hold for the semidiscrete mild solution $X^h$,
with $A$ replaced by $A_h$, provided that the corresponding discrete
semigroup estimates are uniform with respect to the spatial
discretization parameter $h$.
\end{thm}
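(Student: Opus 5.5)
We decompose the mild solution \eqref{mild} as $X(t)=S(t)X_0+\int_0^tS(t-s)F(X(s))\,ds+O(t)$, where $O(t):=\int_0^tS(t-s)\phi\,dB_Q^H(s)$. We prove the spatial estimate \eqref{sparegfrac} first and then use it for the temporal estimate \eqref{timeregfrac}. Set $\alpha:=\frac{2H+\beta}{2}<1$; this is possible because $2H+\beta<2$.

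\textbf{Spatial regularity.} The initial term is immediate from Proposition~\ref{semigroup}: $\|A^{\alpha}S(t)X_0\|_{L^2(\Omega;\mathcal H)}\le C\|A^\alpha X_0\|_{L^2(\Omega;\mathcal H)}$. For the drift term we use $\|A^\alpha S(t-s)\|\le C(t-s)^{-\alpha}$ together with the linear growth \eqref{ass_non_lin3} and the bound \eqref{mean_square_bound}. This gives $C\int_0^t(t-s)^{-\alpha}ds\le C$, which is integrable since $\alpha<1$.

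For $A^\alpha O(t)$ we apply Lemma~\ref{fracint} with $\Phi(s)=A^\alpha S(t-s)\phi$. The boundary part is bounded through $\|A^\alpha S(t-s)\phi\|_{L_2^0}\le C(t-s)^{-H}\|A^{\beta/2}\phi\|_{L_2^0}$ (put $A^H$ on the semigroup). The resulting integral $\int_0^t (t-s)^{-2H}(s^{2H-1}+(t-s)^{2H-1})ds$ is finite by \eqref{beta_identity}, because $2H<1$.

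The increment part is the delicate step. For $v<u$ we write $\Phi(u)-\Phi(v)=A^{\alpha}S(t-u)(I-S(u-v))\phi$. We then interpolate between two bounds:
\begin{itemize}
\item $\|A^{\frac{2H+\beta}{2}+\theta}S(t-u)A^{-\theta}(I-S(u-v))A^{-\beta/2}\|\,\|A^{\beta/2}\phi\|_{L_2^0}\le C(t-u)^{-H-\theta}(u-v)^{\theta}$, with $\theta\in(\frac12-H,1]$ chosen so that $(u-v)^{2\theta+2H-2}$ is integrable in $v$;
\item if necessary, the crude bound using $\kappa$ up to $1$ in Assumption~\ref{noise}.
\end{itemize}
Since $\kappa\le1$ is allowed, the simplest choice is to take more smoothness from $\phi$: use $\|A^{1/2}\phi\|$ or $\|A\phi\|_{L_2^0}$ so that the integrand becomes $(t-u)^{-2a}(u-v)^{2b+2H-2}$ with $b>\frac12-H$ and $a<\frac12$. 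The double integral is then a product of Beta integrals via \eqref{beta_identity}. The main obstacle is choosing exponents so that both singularities (at $u=t$ and $u=v$) are integrable. I would use $\|A^{1/2}\phi\|_{L_2^0}$ and $\theta=\frac12-H+\varepsilon$; this gives the exponent $-2(H+\frac{\beta}{2}-\frac12+\frac12-H+\varepsilon)=-(\beta+2\varepsilon)$ at $u=t$, which is integrable for $\beta<1$. For $\beta=1$ one borrows more from $\kappa=1$. Combining the three terms proves \eqref{sparegfrac}.

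\textbf{Temporal regularity.} With $\tau=t_2-t_1$, we write
\[
X(t_2)-X(t_1)=(S(\tau)-I)X(t_1)+\int_{t_1}^{t_2}S(t_2-s)F(X(s))\,ds+\int_{t_1}^{t_2}S(t_2-s)\phi\,dB_Q^H(s).
\]
This uses the Markov-type restart of the mild formula, valid since the noise is additive. The first term is bounded by $\|A^{-\alpha}(I-S(\tau))\|\,\|A^\alpha X(t_1)\|\le C\tau^{\alpha}\le C\tau^H$, using \eqref{sparegfrac} and $\alpha\ge H$. The drift term is $O(\tau)$. For the last term we apply Lemma~\ref{fracint} on $[t_1,t_2]$, i.e.\ integrate $\mathbf 1_{[t_1,t_2]}S(t_2-\cdot)\phi$ over $[0,T]$ and treat the jump of the indicator through the boundary-type contributions. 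Exactly as in the estimates for $I_{31},I_{32}$ in the proof of Theorem~\ref{well_poss_pbfrac}, this yields $C(\tau^{2H+\beta}+\tau^{2H})$, plus cross terms from the indicator of order $\int_{t_1}^{t_2}\int_{[0,T]\setminus[t_1,t_2]}|u-v|^{2H-2}\le C\tau^{2H}$. Altogether this gives the factor $\tau^{H}$ in \eqref{timeregfrac}.

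The statement for the semidiscrete solution $X^h$ follows verbatim, replacing $A,S$ by $A_h,S_h$ and using the uniform discrete analogues of Proposition~\ref{semigroup}.
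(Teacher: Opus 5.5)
\textbf{One step fails as written.} Your outline otherwise follows the paper's proof closely: the same three-term splitting, Lemma~\ref{fracint} for the stochastic convolution, and the same restart formula for the time increment. The failing step is the boundary part of $\|A^{\alpha}O(t)\|_{L^2(\Omega;\mathcal H)}^2$, with $\alpha=\frac{2H+\beta}{2}$. You use $\|A^{\alpha}S(t-s)\phi\|_{L_2^0}\le C(t-s)^{-H}\|A^{\beta/2}\phi\|_{L_2^0}$. Against the endpoint weight $(t-s)^{2H-1}$ this gives
\[
\int_0^t (t-s)^{-2H}(t-s)^{2H-1}\,ds=\int_0^t (t-s)^{-1}\,ds=+\infty .
\]
Only the $s^{2H-1}$ piece is a genuine Beta integral, equal to $\mathcal B(2H,1-2H)$. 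For the $(t-s)^{2H-1}$ piece, \eqref{beta_identity} would need exponent $x=0$, so ``finite because $2H<1$'' is wrong. This is the same borderline logarithmic divergence you ran into in the increment part, and the repair is the same: take $\varepsilon$ of extra smoothness from the noise. Then $\|A^{\alpha}S(t-s)\phi\|_{L_2^0}\le\|A^{\frac{2H-\varepsilon}{2}}S(t-s)\|_{L(\mathcal H)}\|A^{\frac{\beta+\varepsilon}{2}}\phi\|_{L_2^0}\le C(t-s)^{-H+\frac{\varepsilon}{2}}$, and the bad integrand becomes $(t-s)^{-1+\varepsilon}$. This is exactly how the paper treats $II_{32}$: Proposition~\ref{semigroup} with $\delta=\frac{2H-\varepsilon}{2}$ and Assumption~\ref{noise} with $\kappa=\frac{\beta+\varepsilon}{2}\le 1$.

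\textbf{The rest is correct once this is fixed.} There are two minor deviations from the paper.

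In the increment part, your first bullet cannot close by itself. With only $\|A^{\beta/2}\phi\|_{L_2^0}$, the factor $(t-u)^{-2H-2\theta}$ is non-integrable for every $\theta>\frac12-H$. Your final choice does work: $\kappa=\frac12$, $\theta=\frac12-H+\varepsilon$ with $\beta+2\varepsilon<1$, and $\kappa=1$ when $\beta=1$. The paper instead takes only $\kappa=\frac{\beta+\varepsilon}{2}$ and splits the gain so that both singularities become $(\cdot)^{-1+\varepsilon'}$. That uses less noise regularity, but both choices are legitimate under Assumption~\ref{noise}.

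For the time increment, the paper applies Lemma~\ref{fracint} directly on $[t_1,t_2]$ with weights $(s-t_1)^{2H-1}+(t_2-s)^{2H-1}$, which tacitly uses stationarity of the fBm increments. Your embedding of $\mathbf 1_{[t_1,t_2]}S(t_2-\cdot)\phi$ into $[0,T]$ avoids that. Your cross terms are indeed $O(\tau^{2H})$, since $\int_0^{t_1}(u-v)^{2H-2}\,dv\le (u-t_1)^{2H-1}/(1-2H)$. Finally, the restart identity is just the semigroup property; the fBm-driven solution is not Markov, so drop the ``Markov-type'' label.
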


\begin{proof}
We first prove the spatial regularity estimate
\eqref{sparegfrac}. Recall that
\[
X(t)=S(t)X_0+\int_0^t S(t-s)F(X(s))\,ds+\int_0^t S(t-s)\phi\,dB_Q^H(s).
\]
Applying $A^{\frac{2H+\beta}{2}}$ to the mild formulation and using
the triangle inequality, followed by
$(a+b+c)^2\leq 3(a^2+b^2+c^2)$, we obtain
\begin{eqnarray}
	\label{space_split}
	\left\|A^{\frac{2H+\beta}2}X(t)\right\|^2_{L^2(\Omega;\mathcal{H})}&\leq& 3\left\|A^{\frac{2H+\beta}2}S(t)X_0\right\|^2_{L^2(\Omega;\mathcal{H})}+ 3\left\|\int_0^t A^{\frac{2H+\beta}2}S(t-s)F(X(s))ds\right\|^2_{L^2(\Omega;\mathcal{H})}\nonumber\\
	&+& 3\left\|\int_0^t A^{\frac{2H+\beta}2}S(t-s)\phi dB^H_Q(s) \right\|^2_{L^2(\Omega;\mathcal{H})}\nonumber\\
&=:& 3\sum_{i=1}^3 II_i(t).
\end{eqnarray}
Using the commutativity of $A^{\frac{2H+\beta}2}$ and $S(t)$ on
$\mathcal D(A^{\frac{2H+\beta}2})$ together with the boundedness of the analytic
semigroup, we have
\begin{equation}
\label{space_II1}
II_1(t)\leq C\left\|A^{\frac{2H+\beta}2} X_0\right\|_{L^2(\Omega;\mathcal H)}^2.
\end{equation}
For the deterministic convolution, using Proposition~\ref{semigroup}
with $\delta=\frac{2H+\beta}{2}$, together with the linear growth condition \eqref{ass_non_lin3} in Assumption~\ref{non} and the a priori bound provided by Theorem~\ref{well_poss_pbfrac}, we obtain
\begin{equation}
\label{space_II2}
II_2(t)\leq C.
\end{equation}
It remains to estimate the stochastic convolution $II_3(t)$. Using Lemma \ref{fracint} yields
\begin{align*}
II_3(t)
&:=
\left\|
\int_0^t
A^{\frac{2H+\beta}{2}}
S(t-s)\phi\,dB_Q^H(s)
\right\|_{L^2(\Omega;\mathcal H)}^2
\\
&=
\frac{H(1-2H)}{2}
\int_0^t\int_0^t
\left\|
A^{\frac{2H+\beta}{2}}
\left(
S(t-u)-S(t-v)
\right)\phi
\right\|_{L_2^0}^2
|u-v|^{2H-2}\,dv\,du
\\
&\quad+
H
\int_0^t
\left\|
A^{\frac{2H+\beta}{2}}
S(t-s)\phi
\right\|_{L_2^0}^2
\left(
s^{2H-1}+(t-s)^{2H-1}
\right)ds
\\
&=:II_{31}(t)+II_{32}(t).
\end{align*}
Let $\varepsilon>0$ be sufficiently small such that $0<\varepsilon<\min\{2H,1-\beta\}$ and $\varepsilon=2\varepsilon'$. For the estimate of $II_{31}$, inserting appropriate powers of $A$,
using Proposition \ref{semigroup} with $\delta=\frac{1-\varepsilon'}{2}$ and $\gamma=\frac{1-2H+\varepsilon'}{2}$,
Assumption \ref{noise} with $\kappa=\frac{\beta+\varepsilon}{2}$ and  \eqref{beta_identity}, we obtain
\begin{align}
\label{spa31}
II_{31}
&=\frac{H(1-2H)}{2}\int_0^t\int_0^t\left\|A^{\frac{2H+\beta}{2}}\left(S(t-u)-S(t-v)\right)\phi\right\|_{L_2^0}^2
|u-v|^{2H-2}\,dv\,du\nonumber\\
&=\frac{H(1-2H)}{2}\int_0^t\int_0^u\left\|A^{\frac{1-\epsilon'}2}S(t-u)\right\|_{L(\mathcal H)}^2\nonumber\\
&\qquad\times
\left\|
A^{\frac{2H-1-\varepsilon'}2}
\left(
I-S(u-v)
\right)
\right\|_{L(\mathcal H)}^2
\left\|
A^{\frac{\beta+\varepsilon}{2}}\phi
\right\|_{L_2^0}^2
(u-v)^{2H-2}\,dv\,du
\nonumber\\
&\quad+
\frac{H(1-2H)}{2}
\int_0^t\int_u^t
\left\|
A^{\frac{1-\varepsilon'}2}S(t-v)
\right\|_{L(\mathcal H)}^2
\nonumber\\
&\qquad\times
\left\|
A^{\frac{2H-1-\varepsilon'}2}
\left(
I-S(v-u)
\right)
\right\|_{L(\mathcal H)}^2
\left\|
A^{\frac{\beta+\varepsilon}{2}}\phi
\right\|_{L_2^0}^2
(v-u)^{2H-2}\,dv\,du
\nonumber\\
&\leq
C
\int_0^t\int_0^u
(t-u)^{-1+\varepsilon'}
(u-v)^{-1+\varepsilon'}
\,dv\,du
\\
&\hspace{2cm} +
C
\int_0^t\int_u^t
(t-v)^{-1+\varepsilon'}
(v-u)^{-1+\varepsilon'}
\,dv\,du
\nonumber\\
&\leq
C
\int_0^t
(t-u)^{-1+\varepsilon'}
u^{\varepsilon'}\,du+
C\mathcal B(\varepsilon',\varepsilon')
\int_0^t
(t-u)^{-1+2\varepsilon'}\,du
\nonumber\\
&=
C
\mathcal B(\varepsilon',1+\varepsilon')
t^{2\epsilon'}
+
C
\mathcal B(\varepsilon',\varepsilon')
\frac{t^{2\varepsilon'}}{2\varepsilon'}
\nonumber\\
&\leq
Ct^{\varepsilon}
\leq
C.
\end{align}
For the estimate of $II_{32}$, using Proposition \ref{semigroup} with $\delta=\frac{2H-\varepsilon}{2}$, Assumption \ref{noise} with $\kappa=\frac{\beta+\varepsilon}{2}$, and \ref{beta_identity}, we obtain
\begin{align}
\label{spa32}
II_{32}
&:=
H
\int_0^t
\left\|
A^{\frac{2H+\beta}{2}}
S(t-s)\phi
\right\|_{L_2^0}^2
\left(
s^{2H-1}+(t-s)^{2H-1}
\right)ds
\nonumber\\
&\leq
H
\int_0^t
\left\|
A^{\frac{2H-\varepsilon}{2}}
S(t-s)
\right\|_{L(\mathcal H)}^2
\left\|
A^{\frac{\beta+\varepsilon}{2}}\phi
\right\|_{L_2^0}^2
\left(
s^{2H-1}+(t-s)^{2H-1}
\right)ds
\nonumber\\
&\leq C\int_0^t(t-s)^{-2H+\varepsilon}s^{2H-1}\,ds+C\int_0^t(t-s)^{-2H+\varepsilon}(t-s)^{2H-1}\,ds
\nonumber\\
&=
C
\mathcal B(2H,1-2H+\varepsilon)
t^\epsilon
+
C
\int_0^t
(t-s)^{-1+\varepsilon}\,ds
\nonumber\\
&=
C
\mathcal B(2H,1-2H+\varepsilon)
t^\varepsilon
+
\frac{C}{\varepsilon}
t^\varepsilon
\nonumber\\
&\leq
Ct^\varepsilon
\leq
C.
\end{align}
Combining \eqref{space_split}, \eqref{space_II1},
\eqref{space_II2}, \eqref{spa31}, \eqref{spa32}, and taking the square root, we obtain
\[
\left\|A^{\frac{2H+\beta}{2}}X(t)\right\|_{L^2(\Omega;\mathcal H)}
\leq
C\left(
1+
\left\|A^{\frac{2H+\beta}{2}}X_0\right\|_{L^2(\Omega;\mathcal H)}
\right).
\]
Therefore,
\[
\sup_{0\leq t\leq T}
\left\|A^{\frac{2H+\beta}{2}}X(t)\right\|_{L^2(\Omega;\mathcal H)}
\leq
C\left(
1+
\left\|A^{\frac{2H+\beta}{2}}X_0\right\|_{L^2(\Omega;\mathcal H)}
\right).
\]
We now prove the temporal regularity estimate
\eqref{timeregfrac}. Let $0\leq t_1<t_2\leq T$, using the mild formulation and the semigroup property,
\begin{align*}
&X(t_2)-X(t_1)\\
&=\left(S(t_2-t_1)-I\right)X(t_1)+\int_{t_1}^{t_2}S(t_2-s)F(X(s))\,ds+\int_{t_1}^{t_2}S(t_2-s)\phi\,dB_Q^H(s).
\end{align*}
Consequently,
\[
\left\|
X(t_2)-X(t_1)
\right\|_{L^2(\Omega;\mathcal H)}
\leq
J_1+J_2+J_3,
\]
where the three terms correspond to the three terms on the
right-hand side. For $J_1$, using Proposition \ref{semigroup} with
$\gamma=\frac{2H+\beta}{2}$ and the spatial regularity estimate already proved, we have
\[
\begin{aligned}
    J_1
&=
\left\|
\left(
S(t_2-t_1)-I
\right)
X(t_1)
\right\|_{L^2(\Omega;\mathcal H)}
\nonumber\\
&\leq
C
(t_2-t_1)^\frac{2H+\beta}{2}
\left\|
A^\frac{2H+\beta}{2} X(t_1)
\right\|_{L^2(\Omega;\mathcal H)}
\nonumber\\
&\leq
C
(t_2-t_1)^\frac{2H+\beta}{2}
\left(
1+
\left\|
A^\frac{2H+\beta}{2} X_0
\right\|_{L^2(\Omega;\mathcal H)}
\right).
\label{J1}
\end{aligned}
\]
For $J_2$, using Proposition~\ref{semigroup} with $\delta=0$,
the linear growth condition \eqref{ass_non_lin3} in
Assumption~\ref{non}, and the a priori bound from
Theorem~\ref{well_poss_pbfrac}, we obtain
\[
\begin{aligned}
J_2
&\leq
C\int_{t_1}^{t_2}
\left(1+\|X(s)\|_{L^2(\Omega;\mathcal H)}\right)\,ds\\
&\leq C(t_2-t_1)
\leq C_T(t_2-t_1)^{\frac{2H+\beta}{2}}.
\end{aligned}
\]
Therefore,
\begin{equation}
\label{J12}
J_1+J_2
\leq
C
(t_2-t_1)^{\frac{2H+\beta}{2}}
\left(
1+
\left\|
A^{\frac{2H+\beta}{2}} X_0
\right\|_{L^2(\Omega;\mathcal H)}
\right).
\end{equation}

It remains to estimate $J_3$, using Lemma \ref{fracint} leads to
\[
\begin{aligned}
J_3^2
&=
\left\|
\int_{t_1}^{t_2}
S(t_2-s)\phi\,dB_Q^H(s)
\right\|_{L^2(\Omega;\mathcal H)}^2
\nonumber\\
&=
\frac{H(1-2H)}{2}
\int_{t_1}^{t_2}\int_{t_1}^{t_2}
\left\|
S(t_2-u)\phi-S(t_2-v)\phi
\right\|_{L_2^0}^2
|u-v|^{2H-2}\,dv\,du
\nonumber\\
&\quad+
H
\int_{t_1}^{t_2}
\left\|
S(t_2-s)\phi
\right\|_{L_2^0}^2
\left(
(s-t_1)^{2H-1}
+
(t_2-s)^{2H-1}
\right)ds
\nonumber\\
&=:J_{31}^2+J_{32}^2.
\end{aligned}
\]
For the estimate of $J_{31}^2$, we split the square $[t_1,t_2]^2$ into the two regions $v\leq u$ and $u\leq v$.
For $t_1\leq v\leq u\leq t_2$, using the semigroup property, $S(t_2-v)=S(t_2-u)S(u-v)$, and hence $S(t_2-u)-S(t_2-v)=S(t_2-u)\left(I-S(u-v)\right)$. Similarly, for $t_1\leq u\leq v\leq t_2$, $S(t_2-u)-S(t_2-v)=S(t_2-v)\left(S(v-u)-I\right)$. Therefore, inserting an appropriate power of $A$, using the
boundedness of the semigroup, Proposition \ref{semigroup} with
$\gamma=\frac{\beta}{2}$, and Assumption \ref{noise} with
$\kappa=\frac{\beta}{2}$, we obtain
\begin{align*}
&J_{31}^2\nonumber\\
&\leq C\int_{t_1}^{t_2}\int_{t_1}^{u}\left\|S(t_2-u)\right\|_{L(\mathcal H)}^2\left\|A^{-\frac{\beta}{2}}\left(I-S(u-v)\right)\right\|_{L(\mathcal H)}^2\left\|A^{\frac{\beta}{2}}\phi\right\|_{L_2^0}^2(u-v)^{2H-2}\,dv\,du\nonumber\\
&+C\int_{t_1}^{t_2}\int_{u}^{t_2}\left\|S(t_2-v)\right\|_{L(\mathcal H)}^2\left\|A^{-\frac{\beta}{2}}\left(I-S(v-u)\right)\right\|_{L(\mathcal H)}^2\left\|A^{\frac{\beta}{2}}\phi\right\|_{L_2^0}^2(v-u)^{2H-2}\,dv\,du
\nonumber\\
&\leq
C\int_{t_1}^{t_2}\int_{t_1}^{u}(u-v)^{2H+\beta-2}\,dv\,du+C\int_{t_1}^{t_2}\int_{u}^{t_2}(v-u)^{2H+\beta-2}\,dv\,du.
\end{align*}
Since $2H+\beta-1>0$ by the assumption $\beta>1-2H$, both integrals are finite. More
precisely,
\begin{align}
J_{31}^2
&\leq
\frac{C}{2H+\beta-1}\int_{t_1}^{t_2}(u-t_1)^{2H+\beta-1}\,du+\frac{C}{2H+\beta-1}\int_{t_1}^{t_2}(t_2-u)^{2H+\beta-1}\,du\nonumber\\
&\leq C(t_2-t_1)^{2H+\beta}.
\label{J31_final}
\end{align}
For the estimate of $J_{32}^2$, using the boundedness of the
semigroup $S(t)$ and Assumption \ref{noise} with $\kappa=0$, we obtain
\begin{align}
\label{J32}
J_{32}^2
&:=
H\int_{t_1}^{t_2}\left\|S(t_2-s)\phi\right\|_{L_2^0}^2\left((s-t_1)^{2H-1}+(t_2-s)^{2H-1}\right)ds\nonumber\\
&\leq
C\int_{t_1}^{t_2}\left\|S(t_2-s)\right\|_{L(\mathcal H)}^2\left\|\phi\right\|_{L_2^0}^2\left((s-t_1)^{2H-1}+(t_2-s)^{2H-1}\right)ds
\nonumber\\
&\leq
C\int_{t_1}^{t_2}\left((s-t_1)^{2H-1}+(t_2-s)^{2H-1}\right)ds\nonumber\\
&\leq C(t_2-t_1)^{2H}.
\end{align}
Therefore, adding \eqref{J31_final} and \eqref{J32}, and then taking the square root, yields
\begin{align}
J_3
&\leq
C(t_2-t_1)^{H+\frac{\beta}{2}}
+
C(t_2-t_1)^H
\nonumber\\
&\leq
C(t_2-t_1)^H,
\label{J3_final}
\end{align}
Combining \eqref{J3_final} with \eqref{J12}, we get
\begin{align*}
\left\|
X(t_2)-X(t_1)
\right\|_{L^2(\Omega;\mathcal H)}
&\leq
C(t_2-t_1)^{\frac{2H+\beta}{2}}
+
C(t_2-t_1)^H
\\
&\leq
C(t_2-t_1)^H.
\end{align*}

The proof of the temporal regularity is thus completed. The proof for the semidiscrete solution $X^h$ follows analogously by using the corresponding estimates for $S_h(t)$ and $A_h$, with
constants independent of $h$.
\end{proof}

\section{Space approximation and error estimates}
\label{spa_conv}

We consider the discretization of the spatial domain by a finite element triangulation with maximal length $h$ satisfying the usual regularity assumptions. Let $V_h\subset V$ denotes the space of continuous functions that are piecewise linear over triangulation $J_h$. To discretise in space, we introduce $P_h$ from $L^2(\Omega)$ to $V_h$ define for $u\in L^2(\Omega)$ by
\begin{eqnarray*}
	\label{pro}
	(P_h u,\xi)=(u,\xi),\hspace{2cm}\forall \xi\in V_h.
\end{eqnarray*}
The discrete operator $A_h: V_h\rightarrow V_h$ is defined by
\begin{eqnarray*}
	\label{dis}
	(A_h \rho,\xi)=-a(\rho,\xi),\hspace{2cm}\forall \rho, \xi\in V_h,
\end{eqnarray*}
where $a$ is the corresponding bilinear form of $A$. Like the operator $A$, the discrete operator $-A_h$ is also the generator of an analytic semigroup $S_h(t):=e^{-tA_h}$. The semidiscrete space version of problem \eqref{pb} is to find $X^h(t)=X^h(\cdot,t)$ such that for $t\in[0,T]$
\begin{eqnarray}
	\label{dispb}
	\left\{
	\begin{array}{ll}
	dX^h(t)+A_hX^h(t)dt=P_hF(X^h(t))dt+P_h\phi dB^h_Q(t), \\
	X^h(0)=P_hX_0,\,\,\,\,t\in[0,T].
	\end{array}
	\right.
\end{eqnarray}
Note that $A_h$, $P_hF$ and $P_h\phi$ satisfy the same assumptions as $A$, $F$, $\phi$ respectively. The mild solution of \eqref{dispb} can be represented as follows
\begin{eqnarray}
	\label{dissol}
	X^h(t)=S_h(t)X^h_0+\int_{0}^{t}S_h(t-s)P_hF(X^h(s))ds+\int_{0}^{t}S_h(t-s)P_h\phi dB^H_Q(s),
\end{eqnarray}
where $S_h(t):=e^{-tA_h}$ is the semi discrete version of $S(t)$. Let us define the error operator $T_h(t):=S(t)-S_h(t)P_h$,
then we have the following lemma.
\begin{lem}(\cite[Lemmas 3.1 and 3.2(ii)]{Mukb})
\label{disop} 
For $r\in[0,2]$, $\rho\leq r$, $t\in(0,T]$, and
$v\in\mathcal D(A^{\rho/2})$, there exists a positive constant $C$
such that
\begin{equation}
\label{nordisop}
\|T_h(t)v\|
\leq
C h^r t^{-\frac{r-\rho}{2}}
\|v\|_{\rho}.
\end{equation}

Moreover, for $r\in[1,2]$, $1\leq\rho\leq r$, and
$v\in\mathcal D(A^{\rho/2})$,
\begin{equation}
\label{derivative_error_operator}
\left\|
\frac{d}{dt}T_h(t)v
\right\|
\leq
C h^\rho t^{-1}\|v\|_{\rho}
+
C h^r t^{-1-\frac{r-\rho}{2}}\|v\|_{\rho}.
\end{equation}
Here, $C$ is independent of $h$ and $t$.
\end{lem}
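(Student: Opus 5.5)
The estimates of Lemma \ref{disop} are quoted from \cite{Mukb}, so the plan is to reconstruct the standard finite element argument for non-self-adjoint sectorial operators. The natural tool is the Dunford--Taylor (resolvent) representation of both semigroups along a contour $\Gamma$ in the resolvent sets of $-A$ and $-A_h$, which are uniformly sectorial in $h$:
\[
T_h(t)v=\frac{1}{2\pi i}\int_\Gamma e^{t z}\Bigl((z+A)^{-1}-(z+A_h)^{-1}P_h\Bigr)v\,dz.
\]
The key intermediate bound is a resolvent error estimate of the form
\[
\|R(z)v-R_h(z)P_hv\|\leq Ch^r|z|^{-1+\frac{r-\rho}{2}}\|v\|_\rho,\qquad 0\leq\rho\leq r\leq 2,
\]
uniformly for $z$ in the sector. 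Once this holds, inserting it into the contour integral with $\Gamma$ parametrised at scale $|z|\sim t^{-1}$ gives
\[
\|T_h(t)v\|\leq Ch^r\int_\Gamma |e^{tz}|\,|z|^{-1+\frac{r-\rho}{2}}\,|dz|\leq Ch^rt^{-\frac{r-\rho}{2}}\|v\|_\rho,
\]
which is \eqref{nordisop}.

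To get the resolvent estimate I would treat the endpoint cases and then interpolate. For $\rho=0$, $r=2$, write $w=R(z)v$ and $w_h=R_h(z)P_hv$. Their difference satisfies a Galerkin orthogonality relation for the shifted form $a(\cdot,\cdot)+z(\cdot,\cdot)$. Using coercivity, a Schatz-type argument handles the lower-order advection terms, and the Aubin--Nitsche duality argument is applied with the adjoint problem, which is also elliptic with $H^2$ regularity since the boundary is smooth. Together these give $\|w-w_h\|\leq Ch^2\|v\|$ uniformly in $z$, and the corresponding $V$-norm bound gives the $h^1$ case. For $\rho=r$, write $v=A^{-\rho/2}(A^{\rho/2}v)$ and use approximation of smoother data, which yields a bound without a negative power of $|z|$. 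Interpolation in $\rho$ and $r$ then fills in the remaining range.

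For \eqref{derivative_error_operator}, differentiate under the contour integral. This gives
\[
\frac{d}{dt}T_h(t)=\frac{1}{2\pi i}\int_\Gamma z e^{tz}\bigl(R(z)-R_h(z)P_h\bigr)\,dz,
\]
so an extra factor $|z|\sim t^{-1}$ appears. The term $Ch^\rho t^{-1}\|v\|_\rho$ comes from the case $r=\rho$ of the resolvent bound. The term $Ch^rt^{-1-\frac{r-\rho}{2}}\|v\|_\rho$ comes from the general case, and the restriction $\rho\geq 1$ is needed so that $P_hv$ is stable in $V$.

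I expect the main obstacle to be the uniformity in $z$ of the duality argument for the non-self-adjoint form. The shifted form is not coercive for all $z$ in the sector, so the first thing to try is a Gårding-plus-large-$|z|$ splitting to cover the whole sector.
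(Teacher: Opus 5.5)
The paper gives no proof of this lemma; it imports it from \cite{Mukb}. Your Dunford--Taylor architecture is the standard route for non-self-adjoint problems: a contour at scale $|z|\sim t^{-1}$, a resolvent error bound $\|E(z)v\|\le Ch^r|z|^{-1+\frac{r-\rho}{2}}\|v\|_\rho$ with $E(z)=(z+A)^{-1}-(z+A_h)^{-1}P_h$, then integration. The contour step is correct. Two steps, however, are wrong or missing as you describe them.

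\textbf{The case $\rho=r$.} There the bound must keep the factor $|z|^{-1}$, i.e. it must read $Ch^r|z|^{-1}\|v\|_r$. As you state it, ``a bound without a negative power of $|z|$'' only yields $Ch^rt^{-1}\|v\|_r$ after contour integration. For $r=2$, use three identities: $A^{-1}(z+A)^{-1}=z^{-1}\bigl(A^{-1}-(z+A)^{-1}\bigr)$, the analogue for $A_h$, and $P_hA^{-1}=A_h^{-1}P_h+(P_h-\Pi_h)A^{-1}$, where $\Pi_h$ is the Ritz projection $a(\Pi_hu,\chi)=a(u,\chi)$, $\chi\in V_h$. Together they give $\|E(z)A^{-1}\|\le|z|^{-1}\bigl(\|(I-\Pi_h)A^{-1}\|+\|E(z)\|\bigr)+\|(z+A_h)^{-1}\|\,\|(P_h-\Pi_h)A^{-1}\|\le Ch^2|z|^{-1}$. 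The interpolation must then be organized in order. First interpolate along the diagonal $(r,\rho)=(0,0)\to(2,2)$, using stability $C|z|^{-1}$ at the origin. At $\rho=0$ use $\min\{C|z|^{-1},Ch^2\}\le Ch^r|z|^{-1+r/2}$. Only then interpolate in $\rho\in[0,r]$. Interpolating in $\rho$ at fixed $r<2$ against the crude stability bound gives the wrong power of $|z|$.

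\textbf{Uniformity in $z$.} This is the crux, you leave it open, and the fix you suggest would not work. For $\mathrm{Re}\,z<0$ the real part of $z\|u\|^2+a(u,u)$ is indefinite however large $|z|$ is, so no G{\aa}rding/large-$|z|$ splitting helps. The missing idea is sectoriality of the form. Continuity and coercivity give $|a(u,u)|\le M\|u\|_V^2\le\frac{M}{c}\,\mathrm{Re}\,a(u,u)$, so $a(u,u)$ lies in a sector $|\arg\zeta|\le\delta<\frac{\pi}{2}$. For $|\arg z|\le\pi-\delta'$ with $\delta'>\delta$ the two terms then cannot cancel: $|z\|u\|^2+a(u,u)|\ge\sin\frac{\delta'-\delta}{2}\,\bigl(|z|\|u\|^2+c\|u\|_V^2\bigr)$. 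This gives C\'ea's lemma in the norm $\|u\|_z^2=|z|\|u\|^2+\|u\|_V^2$ with $z$-independent constants. The Aubin--Nitsche step is then uniform, because the primal and adjoint resolvent solutions satisfy $|z|\|\psi\|+|z|^{1/2}\|\psi\|_V+\|\psi\|_{H^2(\Lambda)}\le C\|g\|$, whence $\inf_{\chi\in V_h}\|\psi-\chi\|_z\le Ch\|g\|$. The $H^2$ bound needs smooth coefficients, not only a smooth boundary. No Schatz argument is needed, since the form is coercive after the shift.

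\textbf{The derivative estimate.} Once the resolvent bound holds for all $0\le\rho\le r\le2$, differentiating under the contour gives $\|\frac{d}{dt}T_h(t)v\|\le Ch^rt^{-1-\frac{r-\rho}{2}}\|v\|_\rho$ directly. This implies \eqref{derivative_error_operator} a fortiori. In your route the extra $h^\rho t^{-1}$ term and the restriction $\rho\ge1$ play no role. You also never use $V$-stability of $P_h$, which would require a quasi-uniform mesh. That part of your explanation is therefore off, though harmless.
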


We are now in a position to prove the following theorem, which provides
a strong mean-square estimate for the error between the mild solution
of \eqref{pb} and its spatial semidiscrete approximation.
\begin{thm}[Spatial error estimate]
\label{spaconv}
Let $X$ and $X^h$ be the mild solutions of \eqref{pb} and
\eqref{dispb}, respectively. Suppose that Assumptions
\ref{initdata}--\ref{non} hold. Then,
\begin{equation}
\left\|X(t)-X^h(t)\right\|_{L^2(\Omega;\mathcal H)}
\leq
C h^{2H+\beta},
\qquad 0\leq t\leq T.
\end{equation}
\end{thm}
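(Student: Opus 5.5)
We subtract the mild formulations \eqref{mild} and \eqref{dissol} and decompose the error as
\begin{align*}
X(t)-X^h(t)
&=T_h(t)X_0
+\int_0^t T_h(t-s)F(X(s))\,ds
+\int_0^t S_h(t-s)P_h\bigl(F(X(s))-F(X^h(s))\bigr)ds\\
&\quad+\int_0^t T_h(t-s)\phi\,dB_Q^H(s)
=:E_1+E_2+E_3+E_4 .
\end{align*}
Each term is then estimated in $L^2(\Omega;\mathcal H)$.

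For $E_1$ we apply \eqref{nordisop} with $r=\rho=2H+\beta\le 2$. Assumption~\ref{initdata} then gives $\|E_1\|_{L^2(\Omega;\mathcal H)}\le Ch^{2H+\beta}\|A^{\frac{2H+\beta}{2}}X_0\|_{L^2(\Omega;\mathcal H)}$.

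For $E_2$ we use \eqref{nordisop} with $r=2H+\beta$ and $\rho=0$. This gives $\|T_h(t-s)F(X(s))\|\le Ch^{2H+\beta}(t-s)^{-\frac{2H+\beta}{2}}\|F(X(s))\|$. Since $2H+\beta<2$, the singularity is integrable. Combined with \eqref{ass_non_lin3} and \eqref{mean_square_bound}, this yields $\|E_2\|\le Ch^{2H+\beta}$.

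The term $E_3$ is handled by the uniform boundedness of $S_h$ and $P_h$ together with the Lipschitz bound \eqref{ass_non_lin_lipschitz}. This gives $\|E_3\|\le C\int_0^t\|X(s)-X^h(s)\|_{L^2(\Omega;\mathcal H)}ds$, which is closed at the end by Gronwall's lemma.

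The main work is the stochastic term $E_4$. Lemma~\ref{fracint} turns $\mathbb E\|E_4\|^2$ into a double integral $E_{41}$ of $\|(T_h(t-u)-T_h(t-v))\phi\|_{L_2^0}^2|u-v|^{2H-2}$ plus a boundary integral $E_{42}$ with weights $s^{2H-1}+(t-s)^{2H-1}$.

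\textbf{Boundary integral $E_{42}$.} Apply \eqref{nordisop} with $r=2H+\beta$ and $\rho=\beta+\varepsilon$ (with Assumption~\ref{noise} for $\kappa=\frac{\beta+\varepsilon}{2}$). This gives $\|T_h(t-s)\phi\|_{L_2^0}\le Ch^{2H+\beta}(t-s)^{-\frac{2H-\varepsilon}{2}}$. The resulting integrals $\int_0^t(t-s)^{-2H+\varepsilon}(s^{2H-1}+(t-s)^{2H-1})ds$ are finite by \eqref{beta_identity}, exactly as in \eqref{spa32}.

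\textbf{Double integral $E_{41}$ (expected main obstacle).} For $v<u$ we write $T_h(t-v)-T_h(t-u)=\int_{t-u}^{t-v}\frac{d}{d\tau}T_h(\tau)\,d\tau$. Using \eqref{derivative_error_operator} would force $\rho\ge1$, which is too much regularity on $\phi$ when $\beta+\varepsilon<1$. So the first plan is to interpolate between two bounds:
\begin{itemize}
\item the crude bound $\|(T_h(t-u)-T_h(t-v))\phi\|_{L_2^0}\le Ch^{2H+\beta}\bigl((t-u)^{-\frac{2H-\varepsilon}{2}}+(t-v)^{-\frac{2H-\varepsilon}{2}}\bigr)$;
\item a smoothness-in-time bound obtained by taking $\rho=1$ and $r=2$ in \eqref{derivative_error_operator} with $\kappa=1/2$ in Assumption~\ref{noise}, which is allowed since $\kappa\le 1$.
\end{itemize}
In fact, since Assumption~\ref{noise} allows $\kappa$ up to $1$, the simplest route is to use $\|A^{1/2}\phi\|_{L_2^0}$ or $\|A\phi\|_{L_2^0}$ directly. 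For instance, \eqref{derivative_error_operator} with $\rho=r=2$ gives $\|\frac{d}{d\tau}T_h(\tau)\phi\|\le Ch^{2}\tau^{-1}$, and hence
\[
\|(T_h(t-u)-T_h(t-v))\phi\|_{L_2^0}\le Ch^{2}\log\tfrac{t-v}{t-u}.
\]
Interpolating this with the crude bound of order $h^2$ (take $\rho=r=2$ in \eqref{nordisop}) yields $Ch^{2}\min\{1,\frac{u-v}{t-u}\}^{\theta}$ for any $\theta<1$. Choosing $\theta>\frac{1-2H}{2}$, the weight $(u-v)^{2H-2+2\theta}$ becomes integrable near the diagonal, and the factor $(t-u)^{-2\theta}$ stays integrable. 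Hence $E_{41}\le Ch^{4}\le Ch^{2(2H+\beta)}$ for $h\le1$, since $2H+\beta\le 2$.

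If one prefers not to exploit $\kappa=1$, the fallback is to redo the same splitting with $\rho=\beta+\varepsilon$ and fractional interpolation of \eqref{derivative_error_operator}, mimicking the $\varepsilon'$ bookkeeping of \eqref{spa31}.

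Finally, we collect the four bounds, square them, and apply Gronwall's inequality to $\sup_{s\le t}\|X(s)-X^h(s)\|^2$. This gives $\|X(t)-X^h(t)\|_{L^2(\Omega;\mathcal H)}\le Ch^{2H+\beta}$ uniformly in $t\in[0,T]$.
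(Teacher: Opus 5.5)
Your proposal is correct and follows essentially the same route as the paper: the same error decomposition through $T_h$, Lemma~\ref{disop} for the initial-data and drift terms, and Lemma~\ref{fracint} for the stochastic term. In the double integral you use the same log-versus-crude-bound interpolation $\min\{1,x\}\le x^\theta$ with $\theta\in(\frac12-H,\frac12)$ (note that $\theta<\frac12$ is required), and you close with Gronwall at the end.

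The only difference is in the double integral. The paper applies \eqref{derivative_error_operator} with $r=\rho=2H+\beta$, which is admissible because $2H+\beta>1$, so your worry that $\rho\ge1$ demands too much regularity is unfounded: only $\kappa=\frac{2H+\beta}{2}$ is needed. You instead use the full $\kappa=1$ to get $h^2\le h^{2H+\beta}$, which is also valid under Assumption~\ref{noise}.
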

\begin{proof}
Define $e(t):=X(t)-X^h(t)$. By \eqref{mild} and \eqref{dissol}, taking the norm and using the triangle inequality, we deduce
\begin{eqnarray*}
&&\|e(t)\|_{L^2(\Omega;\mathcal H)}\\
&\leq&
\left\|
S(t)X_0
+\int_0^t S(t-s)F(X(s))ds
+\int_0^t S(t-s)\phi\,dB_Q^H(s)
\right.
\\
&&\left.
\hspace{0.5cm}
-S_h(t)X_0^h
-\int_0^t S_h(t-s)P_hF(X^h(s))ds
-\int_0^t S_h(t-s)P_h\phi\,dB_Q^H(s)
\right\|_{L^2(\Omega;\mathcal H)}
\\
&\leq&
\|T_h(t)X_0\|_{L^2(\Omega;\mathcal H)}+\left\|\int_0^t\left[S(t-s)F(X(s))-S_h(ts)P_hF(X^h(s))\right]ds\right\|_{L^2(\Omega;\mathcal H)}
\\
&&+
\left\|
\int_0^t
T_h(t-s)\phi\,dB_Q^H(s)
\right\|_{L^2(\Omega;\mathcal H)}
\\
&=:&
\sum_{i=1}^3K_i.
\end{eqnarray*}
The estimates of $K_1$ and $K_2$ follow the same lines as
\cite[(67)--(69)]{Nou}. Using Lemma \ref{disop} with
$r=\rho=2H+\beta$ for $K_1$, and with $r=2H+\beta$, $\rho=0$
for $K_2$, together with Assumptions \ref{initdata}, \ref{non}
and Theorem \ref{regfrac}, we obtain
\begin{equation}
\label{spaerr12}
K_1+K_2
\leq
C h^{2H+\beta}
+
C\int_0^t
\|e(s)\|_{L^2(\Omega;\mathcal H)}\,ds.
\end{equation}
We now estimate $K_3$. Applying Lemma \ref{fracint}, we obtain
\begin{eqnarray*}
K_3^2
&=&
\mathbb E
\left\|
\int_0^t
T_h(t-s)\phi\,dB_Q^H(s)
\right\|^2
\\
&=&
\frac{H(1-2H)}{2}
\int_0^t\int_0^t
\left\|
T_h(t-u)\phi
-
T_h(t-v)\phi
\right\|_{L_2^0}^2
|u-v|^{2H-2}\,du\,dv
\\
&&+
H\int_0^t
\left\|
T_h(t-s)\phi
\right\|_{L_2^0}^2
\left(
s^{2H-1}
+
(t-s)^{2H-1}
\right)ds
\\
&=:&
K^2_{31}+K^2_{32}.
\end{eqnarray*}
We first estimate $K_{31}$. 
For $0\leq u\leq v<t$, using \eqref{derivative_error_operator} with $r=\rho=2H+\beta$, the estimate $\ln(1+x)\leq x$, $x\geq 0$, and Assumption \ref{noise} with $\kappa = \frac{2H+\beta}2$, we obtain
\begin{align}
\left\|
T_h(t-u)\phi
-
T_h(t-v)\phi
\right\|_{L_2^0}&\leq
\int_{t-v}^{t-u}
\left\|
\frac{d}{d\tau}T_h(\tau)\phi
\right\|_{L_2^0}d\tau
\nonumber\\
&\leq
Ch^{2H+\beta}\left\|
A^{\frac {2H+\beta}2}\phi\right\|_{L_2^0}
\int_{t-v}^{t-u}\tau^{-1}d\tau
\nonumber\\
&\leq
Ch^{2H+\beta} \ln\left(\frac{t-u}{t-v}\right)
\nonumber\\
&\leq
Ch^{2H+\beta}
\frac{(v-u)}{t-v}.
\label{Th_diff_bound1}
\end{align}
On the other hand, using \eqref{nordisop} with $r=\rho=2H+\beta$, and Assumption \ref{noise} with $\kappa = \frac{2H+\beta}2$ yields
\begin{align}
\left\|
T_h(t-u)\phi-T_h(t-v)\phi\right\|_{L_2^0}&\leq
\left\|T_h(t-u)\phi\right\|_{L_2^0}+\left\|T_h(t-v)\phi\right\|_{L_2^0}\nonumber\\
&\leq
Ch^{2H+\beta}\left\|A^{\frac {2H+\beta}2}\phi\right\|_{L_2^0}\leq
Ch^{2H+\beta}.
\label{Th_diff_bound2}
\end{align}
Combining \eqref{Th_diff_bound1} and \eqref{Th_diff_bound2}, and
using the estimate $\min\{1,x\}\leq x^\alpha$, $x\geq0$, $0<\alpha<1$, we obtain
\begin{align*}
\left\|
T_h(t-u)\phi
-
T_h(t-v)\phi
\right\|_{L_2^0}
&\leq
Ch^{2H+\beta}
\min\{1,\frac{(v-u)}{t-v}\}\nonumber\\
&\leq
Ch^{2H+\beta}
(v-u)^\alpha
(t-v)^{-\alpha}.
\end{align*}
Choose $\frac12-H<\alpha<\frac12$, then $2\alpha+2H-2>-1$. Therefore, using the symmetry of the double integral,
\begin{align}
K^2_{31}
&\leq
Ch^{2(2H+\beta)}
\int_0^t
\int_0^v
(v-u)^{2\alpha}
(t-v)^{-2\alpha}
(v-u)^{2H-2}
\,du\,dv
\nonumber\\
&=
Ch^{2(2H+\beta)}
\int_0^t
(t-v)^{-2\alpha}
\left(
\int_0^v
(v-u)^{2\alpha+2H-2}du
\right)dv\nonumber\\
&\leq
Ch^{2(2H+\beta)}
\int_0^t
v^{2\alpha+2H-1}
(t-v)^{-2\alpha}dv
\nonumber\\
&=
Ch^{2(2H+\beta)}
t^{2H}
\mathcal B
\left(
2\alpha+2H,
1-2\alpha
\right)
\nonumber\\
&\leq
Ch^{2(2H+\beta)}.
\label{spafraer1}
\end{align}
For the term $K_{32}$, applying Lemma \ref{disop} with
$r=2H+\beta$ for both terms, $\rho=\beta$ for the first term and
$\rho=\beta+\varepsilon$ for the second one, together with Assumption
\ref{noise} with $\kappa=\frac{\beta}{2}$ and $\kappa=\frac{\beta+\varepsilon}{2}$,
respectively, we obtain
\begin{eqnarray}
\label{spafraer2}
K^2_{32}
&:=&
H\int_0^t
\left\|
T_h(t-s)\phi
\right\|_{L_2^0}^2
\left(
s^{2H-1}
+
(t-s)^{2H-1}
\right)ds
\nonumber\\
&=&
H\int_0^t
\left\|
T_h(t-s)\phi
\right\|_{L_2^0}^2
s^{2H-1}ds+H\int_0^t\left\|T_h(t-s)\phi\right\|_{L_2^0}^2(t-s)^{2H-1}ds\nonumber\\
&\leq&
C_H
h^{2(2H+\beta)}
\int_0^t
(t-s)^{-2H}
\left\|
A^{\frac{\beta}{2}}\phi
\right\|_{L_2^0}^2
s^{2H-1}ds
\nonumber\\
&&+
C_H
h^{2(2H+\beta)}
\int_0^t
(t-s)^{-2H+\epsilon}
\left\|
A^{\frac{\beta+\varepsilon}{2}}\phi
\right\|_{L_2^0}^2
(t-s)^{2H-1}ds
\nonumber\\
&\leq&
C_Hh^{2(2H+\beta)}
\int_0^t
(t-s)^{-2H}s^{2H-1}ds+C_Hh^{2(2H+\beta)}\int_0^t(t-s)^{-1+\varepsilon}ds\nonumber\\
&\leq&
C_Hh^{2(2H+\beta)}
\mathcal B(1-2H,2H)
+
C_Hh^{2(2H+\beta)}
\frac{t^\varepsilon}{\varepsilon}
\nonumber\\
&\leq&
Ch^{2(2H+\beta)}.
\end{eqnarray}
Here, $\varepsilon>0$ is chosen sufficiently small such that the
corresponding noise regularity in Assumption \ref{noise} is
satisfied. Combining \eqref{spafraer1} and \eqref{spafraer2}, and taking the square root yields
\begin{equation}
\label{spaerr3}
K_3
\leq
Ch^{2H+\beta}.
\end{equation}

Finally, combining \eqref{spaerr12} and \eqref{spaerr3}, we obtain
\begin{eqnarray*}
\|e(t)\|_{L^2(\Omega;\mathcal H)}
&\leq&
Ch^{2H+\beta}
+
C\int_0^t
\|e(s)\|_{L^2(\Omega;\mathcal H)}ds.
\end{eqnarray*}
Applying the continuous Gronwall inequality yields
\[
\|e(t)\|_{L^2(\Omega;\mathcal H)}
\leq
Ch^{2H+\beta},
\qquad
0\leq t\leq T.
\]
The proof is thus completed.
\end{proof}

\section{Fully discrete stochastic exponential scheme and strong error estimates}
\label{schemes}

In this section, we introduce a fully discrete stochastic exponential
approximation of the semidiscrete problem \eqref{dispb} and establish
its strong convergence rate. Throughout this section, let $0=t_0<t_1<\cdots<t_M=T$, $t_m=m\Delta t$, $\Delta t=\frac{T}{M}$. For every $s\in[0,T)$, we define
\[
[s]:=t_i \qquad\text{and}\qquad [s]^m:=i, \qquad s\in[t_i,t_{i+1}).
\]
Recall that the semidiscrete mild solution is given by
\begin{equation}
\label{soldis}
X^h(t)=
S_h(t)X_0^h+\int_0^tS_h(t-s)P_hF(X^h(s))\,ds +\int_0^tS_h(t-s)P_h\phi\,dB_Q^H(s),\quad t\in[0,T].
\end{equation}
We consider the following stochastic exponential time discretization:
\begin{equation}
\label{SETD1}
X_m^h=
S_h(t_m)X_0^h+\int_0^{t_m}S_h(t_m-s)P_hF(X_{[s]^m}^h)\,ds+\int_0^{t_m}S_h(t_m-[s])P_h\phi\,dB_Q^H(s).
\end{equation}
Using the semigroup property, the scheme can equivalently be written
in the one-step form
\begin{equation*}
X_{m+1}^h=S_h(\Delta t)X_m^h+\int_0^{\Delta t}S_h(\Delta t-s)P_hF(X_m^h)\,ds+S_h(\Delta t)P_h\phi\Delta B_{Q,m}^H,
\end{equation*}
where $\Delta B_{Q,m}^H=B_Q^H(t_{m+1})-B_Q^H(t_m)$.
Equivalently,
\begin{equation}
\label{SETD1_phi}
X_{m+1}^h
=S_h(\Delta t)X_m^h+\Delta t\,\varphi_1(\Delta t A_h)P_hF(X_m^h)+S_h(\Delta t)P_h\phi\Delta B_{Q,m}^H,
\end{equation}
where $\displaystyle \varphi_1(z)=\frac{e^z-1}{z}$. The representation \eqref{SETD1_phi} corresponds to the SETDM1
scheme used in the numerical experiments.

\subsection{Auxiliary estimates}
\label{auxi}

We first establish an auxiliary estimate for the quadrature error
associated with the stochastic convolution.

\begin{lem}[Quadrature error of the stochastic convolution]
\label{stoch_quad_error}

Under Assumption \ref{noise}, for every $m=1,\ldots,M$,
\begin{equation}
\label{stoch_quad_final}
\left\|
\int_0^{t_m}
\left[
S_h(t_m-s)-S_h(t_m-[s])
\right]
P_h\phi\,dB_Q^H(s)
\right\|_{L^2(\Omega;\mathcal H)}
\leq
C\Delta t^{\frac{2H+\beta}{2}},
\end{equation}
where $C$ is independent of $h$, $\Delta t$, and $m$.
\end{lem}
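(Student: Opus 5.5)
The integrand $\Psi(s):=[S_h(t_m-s)-S_h(t_m-[s])]P_h\phi$ is deterministic, so I would apply Lemma~\ref{fracint} on $[0,t_m]$ and split the second moment into the double-integral term $\mathcal I_1$ and the boundary term $\mathcal I_2$. On each cell $s\in[t_i,t_{i+1})$ I write
\[
\Psi(s)=S_h(t_m-s)\bigl(I-S_h(s-t_i)\bigr)P_h\phi .
\]
Two pointwise bounds will be used throughout, with constants uniform in $h$ because they rest on the discrete analogue of Proposition~\ref{semigroup}:
\[
\|\Psi(s)\|_{L_2^0}\le C\,(s-[s])^{\frac{\beta'}{2}}(t_m-s)^{-\frac{\beta'-\beta}{2}},
\qquad
\|\Psi(s)\|_{L_2^0}\le C .
\]
Here $\beta'=2H+\beta$. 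The first bound is obtained by inserting $A_h^{\beta'/2}$ and using $\|A_h^{\beta/2}P_h\phi\|_{L_2^0}\le C$. I will also need Assumption~\ref{noise} with $\kappa$ up to $\frac{2H+\beta+\varepsilon}{2}\le 1$, together with the standard norm equivalence $\|A_h^{\kappa}P_h v\|\le C\|A^\kappa v\|$ for $\kappa\le\frac12$, or a direct analogue for larger $\kappa$.

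For the boundary term $\mathcal I_2$, I use $\|\Psi(s)\|^2\le C\Delta t^{\beta'}(t_m-s)^{-2H}$ and integrate against $s^{2H-1}+(t_m-s)^{2H-1}$. The $s^{2H-1}$ part is a Beta integral bounded by $C\Delta t^{\beta'}$. The $(t_m-s)^{-1}$ part is logarithmically divergent, so I treat it separately. On the last cell I use $\|\Psi\|\le C\|A_h^{-\beta'/2}(I-S_h(s-t_{m-1}))\|\,\|A_h^{\beta'/2}S_h(t_m-s)P_h\phi\|$ with an extra $\varepsilon$ of noise regularity, i.e.\ $\kappa=\frac{2H+\beta+\varepsilon}{2}$, which trades $(t_m-s)^{-2H}$ for $(t_m-s)^{-2H+\varepsilon}$ at the price of $\Delta t^{\beta'-\varepsilon}$. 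Alternatively, I use $\kappa=(\beta+\varepsilon)/2$ exactly as in \eqref{spa32}. This yields $\Delta t^{\beta'}$ up to an $\varepsilon$, and choosing the exponents carefully gives exactly $\Delta t^{\beta'}$.

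The main obstacle is $\mathcal I_1$, the diagonal--off-diagonal decomposition. I split $[0,t_m]^2$ into the same cell or adjacent cells (diagonal, $|u-v|\lesssim\Delta t$) and the far cells (off-diagonal).

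\emph{Diagonal part.} For $u,v$ in the same cell, $\Psi(u)-\Psi(v)=(S_h(t_m-u)-S_h(t_m-v))P_h\phi$, since $[u]=[v]$. This is bounded by $C|u-v|^{\beta'/2}$ times a negative power of the distance to $t_m$, up to $\varepsilon$. Integrating $|u-v|^{2H+\beta'-2}$ over a cell of side $\Delta t$ gives $\Delta t^{2H+\beta'}$ per cell, with a singular weight summable in $i$. For adjacent cells I simply bound $\|\Psi(u)-\Psi(v)\|\le\|\Psi(u)\|+\|\Psi(v)\|$ and integrate $|u-v|^{2H-2}$ against the bound, using $(s-[s])^{\beta'}$ to absorb the singularity near the shared grid point. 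This is the place where $2H+\beta'>1$, i.e.\ $\beta>1-4H$ type conditions, must be checked. If this fails for small $H$, I would instead use $\beta>1-2H$ and exploit $(u-[u])^{\beta'/2}$ vanishing at the left endpoint.

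\emph{Off-diagonal part.} For $|u-v|\ge\Delta t$ I bound $\|\Psi(u)-\Psi(v)\|^2\le 2\|\Psi(u)\|^2+2\|\Psi(v)\|^2$. The kernel $\int_{|u-v|\ge\Delta t}|u-v|^{2H-2}\,dv\le C\Delta t^{2H-1}$ is the continuous counterpart of the summable discrete covariance sequence $\rho_H(k)\sim k^{2H-2}$. This gives
\[
C\Delta t^{2H-1}\int_0^{t_m}\|\Psi(u)\|^2\,du\le C\Delta t^{2H-1}\Delta t^{\beta'}\int_0^{t_m}(t_m-u)^{-2H}\,du .
\]
That is only $\Delta t^{\beta'+2H-1}$, which is too weak. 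I therefore refine by using the first bound with exponent $\beta'/2+\frac{1-2H}{2}$ where the noise regularity allows it (capped at $\kappa\le1$), or by exploiting the full regularity $\kappa=1$ together with a singular factor $(t_m-u)^{-(1-\beta/2)-\ldots}$ that remains integrable. I expect balancing these exponents to be the delicate step, with the target $\Delta t^{\beta'}$ obtained after summing and taking square roots.
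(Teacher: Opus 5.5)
Your architecture matches the paper's: Lemma~\ref{fracint}, a same-cell/neighbouring/far splitting of the double integral, and a separate boundary term. However, the pointwise bound you build on is too weak, and as written the argument does not close. You use only $\kappa=\beta/2$ (or $(\beta+\varepsilon)/2$) of noise regularity and borrow the rest from $S_h(t_m-s)$, which gives the cell-local H\"older exponent $\beta'/2=\frac{2H+\beta}{2}$. The double integral involves $m\sim\Delta t^{-1}$ cells, and each same-cell block contributes $\int_{I_i}\int_{I_i}|u-v|^{\beta'+2H-2}\,dv\,du\simeq\Delta t^{4H+\beta}$. So even without any weight, your bound only yields $\Delta t^{2H+\beta-(1-2H)}$ for the same-cell part. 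With your weight $(t_m-u)^{-2H}$ it behaves like $\Delta t^{\beta'}\sum_{k=1}^{m}k^{-2H}\simeq\Delta t^{\beta'}m^{1-2H}$, so the claim that this weight is summable in $i$ is false for $H<\frac12$. Neighbouring cells suffer the same $m^{1-2H}$ loss. The corner singularity of $|u-v|^{2H-2}$ at a shared grid point is integrable (each pair contributes $O(\Delta t^{2H})$ to the kernel integral), so the vanishing of $(s-[s])^{\beta'/2}$ there is irrelevant; what matters is the number of cells. You do detect the deficit $\Delta t^{2H-1}$ in the far part, but you leave its repair as an unspecified balancing of exponents.

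The missing observation is that nothing needs balancing. Assumption~\ref{noise} holds for every $\kappa\in[0,1]$ and $\frac{1+\beta}{2}\le1$, so put all the regularity on $\phi$ and none on $S_h(t_m-s)$. Writing $\Psi(s)=S_h(t_m-s)A_h^{-\frac{1+\beta}{2}}\bigl(I-S_h(s-[s])\bigr)A_h^{\frac{1+\beta}{2}}P_h\phi$ gives two uniform, non-singular bounds: $\|\Psi(s)\|_{L_2^0}\le C(s-[s])^{\frac{1+\beta}{2}}\le C\Delta t^{\frac{1+\beta}{2}}$, and $\|\Psi(u)-\Psi(v)\|_{L_2^0}\le C|u-v|^{\frac{1+\beta}{2}}$ for $u,v$ in the same cell. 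This is exactly the exponent $\frac{\beta'}{2}+\frac{1-2H}{2}$ you guessed at the end, and the extra $\frac{1-2H}{2}$ pays for the $m$ cells. The same-cell part is at most $Cm\Delta t^{2H+\beta+1}$. The part over distinct cells is at most $C\Delta t^{1+\beta}\Delta t^{2H}\sum_{k=1}^{m-1}(m-k)\rho_H(k)\le Cm\Delta t^{2H+\beta+1}$, by summability of $\rho_H(k)=\int_0^1\int_0^1|k+y-x|^{2H-2}\,dy\,dx$; your continuous bound $\int_{|u-v|\ge\Delta t}|u-v|^{2H-2}\,dv\le C\Delta t^{2H-1}$ works equally well for far cells. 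The boundary term is at most $C\Delta t^{1+\beta}t_m^{2H}$, with no logarithm at $s=t_m$. All three are $\le C\Delta t^{2H+\beta}$, which is precisely the paper's proof. The $\varepsilon$-trading in your boundary term and the concern about $\beta>1-4H$ (automatic from $\beta>1-2H$ anyway) become unnecessary.
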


\begin{proof}

Define $\displaystyle G_m(s)=\left[S_h(t_m-s)-S_h(t_m-[s])\right]P_h\phi$, $s\in[0,t_m]$.\\
Using Lemma \ref{fracint}, we obtain
\begin{align}
&\left\|\int_0^{t_m}G_m(s)\,dB_Q^H(s)\right\|_{L^2(\Omega;\mathcal H)}^2\nonumber\\
&\leq
\frac{H(1-2H)}{2}
\int_0^{t_m}\int_0^{t_m}
\left\|
G_m(u)-G_m(v)
\right\|_{L_2^0}^2
|u-v|^{2H-2}\,dv\,du
\nonumber\\
&\quad+
H
\int_0^{t_m}
\left\|
G_m(s)
\right\|_{L_2^0}^2
\left(
s^{2H-1}
+
(t_m-s)^{2H-1}
\right)ds
\nonumber\\
&=:L_{21}^2+L_{22}^2.
\label{L2split}
\end{align}
We estimate $L_{21}$ and $L_{22}$ separately. Let $I_i=[t_i,t_{i+1})$, $i=0,\ldots,m-1$.
For $s\in I_i$, we have $[s]=t_i$. Using the semigroup property, Proposition \ref{semigroup} with
$\gamma=\frac{1+\beta}{2}$, the boundedness of $S_h(t)$, the stability of the finite element
projection, and Assumption \ref{noise}, we obtain
\begin{align}
\left\|
G_m(s)
\right\|_{L_2^0}
&\leq
\left\|
S_h(t_m-s)
\right\|_{L(\mathcal H)}
\left\|
A_h^{-\frac{1+\beta}{2}}
\left(
I-S_h(s-t_i)
\right)
\right\|_{L(\mathcal H)}
\left\|
A_h^{\frac{1+\beta}{2}} P_h\phi
\right\|_{L_2^0}
\nonumber\\
&\leq
C(s-t_i)^{\frac{1+\beta}{2}}
\nonumber\\
&\leq
C\Delta t^{\frac{1+\beta}{2}}.
\label{G_pointwise_kappa}
\end{align}
We now decompose $L_{21}^2$ as 
\begin{align*}
L_{21}^2
&=
\frac{H(1-2H)}{2}
\sum_{i=0}^{m-1}
\int_{I_i}\int_{I_i}
\left\|
G_m(u)-G_m(v)
\right\|_{L_2^0}^2
|u-v|^{2H-2}\,dv\,du, \\
&\qquad + \frac{H(1-2H)}{2}
\sum_{\substack{i,j=0\\i\neq j}}^{m-1}
\int_{I_i}\int_{I_j}
\left\|
G_m(u)-G_m(v)
\right\|_{L_2^0}^2
|u-v|^{2H-2}\,dv\,du,\\ 
&=: L_{21,\mathrm{diag}}^2+L_{21,\mathrm{off}}^2. 
\end{align*}
For the estimate of $L_{21,\mathrm{diag}}^2$, let $u,v\in I_i$
and assume without loss of generality that $v\leq u$. Since
$[u]=[v]=t_i$, the semigroup property, Proposition \ref{semigroup} with
$\gamma=\frac{1+\beta}{2}$, the boundedness of the
semigroup, the stability of the finite element projection, and
Assumption \ref{noise} yield
\begin{align}
L_{21,\mathrm{diag}}^2 &:=\frac{H(1-2H)}{2}
\sum_{i=0}^{m-1}
\int_{I_i}\int_{I_i}
\left\|
G_m(u)-G_m(v)
\right\|_{L_2^0}^2
|u-v|^{2H-2}\,dv\,du,\nonumber\\
&\leq \frac{H(1-2H)}{2}
\sum_{i=0}^{m-1}
\int_{I_i}\int_{I_i}
\left\|
S_h(t_m-u)
\right\|^2_{L(\mathcal H)}
\left\|
A_h^{-\frac{1+\beta}{2}}
\left(
I-S_h(u-v)
\right)
\right\|^2_{L(\mathcal H)}\nonumber\\
&\hspace{3cm}\times\left\|A_h^{\frac{1+\beta}{2}} P_h\phi\right\|^2_{L_2^0}
|u-v|^{2H-2}\,dv\,du,\nonumber\\
&\leq
C
\sum_{i=0}^{m-1}
\int_{I_i}\int_{I_i}
|u-v|^{2H+\beta-1}\,dv\,du,\nonumber\\
&\leq
Cm\Delta t^{2H+\beta+1} \leq C\Delta t^{2H+\beta}.
\label{L21_diag_final}
\end{align}

For the estimate of $L_{21,\mathrm{off}}^2$, let $u\in I_i$ and
$v\in I_j$, with $i\neq j$. From \eqref{G_pointwise_kappa},
\begin{align*}
\left\|
G_m(u)-G_m(v)
\right\|_{L_2^0}
&\leq
\left\|
G_m(u)
\right\|_{L_2^0}
+
\left\|
G_m(v)
\right\|_{L_2^0}
\nonumber\\
&\leq
C\Delta t^{\frac{1+\beta}{2}}.
\end{align*}
Hence
\begin{align}
L_{21,\mathrm{off}}^2
&\leq
C\Delta t^{1+\beta}
\sum_{\substack{i,j=0\\i\neq j}}^{m-1}
\int_{I_i}\int_{I_j}
|u-v|^{2H-2}\,dv\,du.
\label{L21off_step1}
\end{align}
For $j=i+k$, $k\geq1$, using the changes of variables $u=t_i+\Delta t\,x$, $v=t_{i+k}+\Delta t\,y$, $x,y\in[0,1]$,
we have
\begin{align*}
\int_{I_i}\int_{I_{i+k}}
|u-v|^{2H-2}\,dv\,du
&=
\Delta t^{2H}\rho_H(k),
\end{align*}
where $\displaystyle \rho_H(k) = \int_0^1\int_0^1 |k+y-x|^{2H-2}\,dy\,dx$.
For $k=1$, setting $a=1-x$ gives \\$\displaystyle \rho_H(1)=\int_0^1\int_0^1(a+y)^{2H-2}\,dy\,da<\infty$, since $2H-2>-2$. For $k\geq2$, $\rho_H(k)\leq C(k-1)^{2H-2}$.
Since $H<1/2$, we have $2H-2<-1$, therefore $\displaystyle \sum_{k=1}^{\infty}\rho_H(k)<\infty$, and since for each $k=1,\ldots,m-1$ there are exactly $m-k$ pairs of intervals $(I_i,I_{i+k})$, it follows from \eqref{L21off_step1} that
\begin{align}
L_{21,\mathrm{off}}^2
&\leq
C\Delta t^{1+\beta}
\sum_{\substack{i,j=0\\i\neq j}}^{m-1}
\int_{I_i}\int_{I_j}
|u-v|^{2H-2}\,dv\,du\nonumber\\
& \leq C\Delta t^{1+\beta}\Delta t^{2H}
\sum_{k=1}^{m-1}(m-k)\rho_H(k)
\nonumber\\
&\leq
Cm\Delta t^{2H+\beta+1}\sum_{k=1}^{\infty}\rho_H(k)\leq
C\Delta t^{2H+\beta}.
\label{L21_off_final}
\end{align}
Therefore, combining \eqref{L21_diag_final} and
\eqref{L21_off_final}, we obtain
\begin{equation}
\label{L21_final}
L_{21}^2
\leq
C\Delta t^{2H+\beta},
\end{equation}

We now estimate the second contribution in \eqref{L2split}.
From \eqref{G_pointwise_kappa}, we have
\begin{align}
L_{22}^2
&=
H
\int_0^{t_m}
\|G_m(s)\|_{L_2^0}^2
\left[
s^{2H-1}
+
(t_m-s)^{2H-1}
\right]ds
\nonumber\\
&\leq
C
\Delta t^{1+\beta}
\int_0^{t_m}
\left[
s^{2H-1}
+
(t_m-s)^{2H-1}
\right]ds
\nonumber\\
&\leq
C
\Delta t^{1+\beta}
t_m^{2H}
\nonumber\\
&\leq C\Delta t^{1+\beta}\leq C
\Delta t^{2H+\beta}.
\label{L22_final}
\end{align}
Finally, combining \eqref{L21_final}, \eqref{L22_final} and taking the square root yields
\[
\left\|
\int_0^{t_m}
\left[
S_h(t_m-s)-S_h(t_m-[s])
\right]
P_h\phi\,dB_Q^H(s)
\right\|_{L^2(\Omega;\mathcal H)}
\leq
C
\Delta t^{\frac{2H+\beta}{2}}.
\]
This proves \eqref{stoch_quad_final}.
\end{proof}

Before turning to the accumulated nonlinear drift defect, we establish
a uniform estimate for the Malliavin derivative of the semidiscrete
solution. This estimate will be used to control the random coefficients
arising in the nonlinear stochastic contribution.

\begin{lem}[Uniform estimate for the Malliavin derivative]
\label{lem_malliavin_Xh}
Under Assumption \ref{non}, for every $t\in[0,T]$,
$X^h(t)$ is Malliavin differentiable and, for
$0\leq\theta\leq t$,
\begin{equation}
\label{DXh_equation}
D_\theta X^h(t)
=
S_h(t-\theta)P_h\phi
+
\int_\theta^t
S_h(t-r)P_hF'(X^h(r))
D_\theta X^h(r)\,dr.
\end{equation}
Moreover, $D_\theta X^h(t)=0$, $\theta>t$, and
\begin{equation}
\label{DXh_canonical_bound}
\sup_{0\leq t\leq T}
\left\|
DX^h(t)
\right\|_{L^4(\Omega;\mathfrak H_H(L_2^0))}
\leq C,
\end{equation}
where $C$ is independent of $h$.
\end{lem}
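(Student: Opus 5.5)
Differentiability and the linearised equation \eqref{DXh_equation} come from Picard iteration in Malliavin--Sobolev spaces. We then bound the canonical norm \eqref{canonical_V_norm} of $\theta\mapsto D_\theta X^h(t)$: the endpoint term by a Gronwall argument on pointwise bounds, and the fractional-increment double integral by exploiting the semigroup smoothing of $S_h(t-\theta)P_h\phi$ in $\theta$.

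\emph{Step 1 (differentiability and equation).} We start from the Picard iterates $X^{h,0}=X_0^h$ and $X^{h,n+1}=\xi_h(X^{h,n})$, where $\xi_h$ is the discrete analogue of \eqref{op}. The stochastic convolution $\int_0^tS_h(t-s)P_h\phi\,dB_Q^H(s)$ is a Wiener integral of a deterministic integrand. Its Malliavin derivative is therefore the integrand itself, $\mathbf 1_{[0,t]}(\theta)S_h(t-\theta)P_h\phi$, viewed as an element of $\mathfrak H_H(L_2^0)$; this uses $X_0$ being $\mathcal F_0$-measurable, which we treat as independent of the noise. Since $F\in C_b^1$, the chain rule gives
\[
D_\theta F(X^{h,n}(r))=F'(X^{h,n}(r))D_\theta X^{h,n}(r).
\]
By induction, each iterate then satisfies the linear recursion obtained from \eqref{DXh_equation}. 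The uniform bounds of Step 2 hold for all iterates, and $X^{h,n}\to X^h$ in $L^2$. Closability of $D^H$ then gives $X^h(t)\in\mathbb D_H^{1,2}$, the limit equation \eqref{DXh_equation}, and the support property $D_\theta X^h(t)=0$ for $\theta>t$.

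\emph{Step 2 (pointwise bound).} For fixed $\theta$, take norms in \eqref{DXh_equation}, using $\|S_h\|\le C$, $\|F'\|\le L$ and Assumption~\ref{noise} with $\kappa=0$. This gives
\[
\|D_\theta X^h(t)\|_{L_2^0}\le C+CL\int_\theta^t\|D_\theta X^h(r)\|_{L_2^0}\,dr,
\]
and Gronwall's inequality yields $\|D_\theta X^h(t)\|_{L_2^0}\le C$. The bound holds almost surely, with $C$ deterministic and independent of $h$. As a consequence, all $L^4(\Omega)$ moments reduce to deterministic bounds. The endpoint term in \eqref{canonical_V_norm} is then bounded by $C\int_0^t(\theta^{2H-1}+(t-\theta)^{2H-1})\,d\theta\le C$.

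\emph{Step 3 (the increment term, the main obstacle).} We need almost-sure control of
\[
\int_0^t\!\!\int_0^t\|D_\theta X^h(t)-D_\sigma X^h(t)\|_{L_2^0}^2|\theta-\sigma|^{2H-2}\,d\sigma\,d\theta,
\]
and we must also account for the jump at $\theta=t$: outside $[0,t]$ the derivative vanishes, while inside it is of size $\|S_h(t-\theta)P_h\phi\|_{L_2^0}$. Take $\sigma<\theta\le t$. Subtracting the two instances of \eqref{DXh_equation} splits the increment into three parts:
\begin{itemize}
\item $(a)$ $[S_h(t-\theta)-S_h(t-\sigma)]P_h\phi$, treated exactly as $J_{31}$ in Theorem~\ref{regfrac}, giving a contribution $\le C|\theta-\sigma|^{\beta/2}$;
\item $(b)$ $-\int_\sigma^\theta S_h(t-r)P_hF'(X^h(r))D_\sigma X^h(r)\,dr$, bounded by $C|\theta-\sigma|$ via Step 2;
\item $(c)$ $\int_\theta^tS_h(t-r)P_hF'(X^h(r))[D_\theta X^h(r)-D_\sigma X^h(r)]\,dr$.
\end{itemize}
Set $\Delta(r):=\sup_{\sigma<\theta\le r}\|D_\theta X^h(r)-D_\sigma X^h(r)\|_{L_2^0}/|\theta-\sigma|^{\beta/2}$. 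Part $(c)$ then leads to the Gronwall inequality $\Delta(t)\le C+C\int_0^t\Delta(r)\,dr$, hence $\Delta\le C$. Plugging this in, the double integral is bounded by $C\int\!\!\int|\theta-\sigma|^{2H+\beta-2}$, which is finite because $\beta>1-2H$. The jump across $\theta=t$ is absorbed by the endpoint weight $(t-\theta)^{2H-1}$ already included in Step 2.

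The delicate point is that $F'(X^h(r))$ is random, so the Gronwall argument has to be run pathwise. Everything above is pathwise, so this works. All constants depend only on semigroup bounds that are uniform in $h$, which gives \eqref{DXh_canonical_bound} after taking the supremum over $t$.
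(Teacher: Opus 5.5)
Your argument is correct, but it follows a different route from the paper's. The paper never opens up the canonical norm for the nonlinear part. It reads \eqref{DXh_equation} as an identity in $\mathfrak H_H(L_2^0)$,
\[
DX^h(t)=\mathbf 1_{[0,t]}(\cdot)S_h(t-\cdot)P_h\phi+\int_0^t \mathbf 1_{[0,r]}(\cdot)S_h(t-r)P_hF'(X^h(r))DX^h(r)\,dr,
\]
and applies Minkowski's inequality to the $dr$-integral. Since $S_h(t-r)P_hF'(X^h(r))$ does not depend on $\theta$, it acts on $\mathfrak H_H(L_2^0)$ with norm at most $C$. This is immediate from \eqref{canonical_V_norm}, because both the increment and the endpoint terms scale with the operator norm. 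The noise term is bounded by the stochastic-convolution estimate already proved, and Gronwall is applied directly to $t\mapsto\|DX^h(t)\|_{L^4(\Omega;\mathfrak H_H(L_2^0))}$.

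You instead work pointwise in $\theta$. You prove an almost-sure sup bound, then a $\beta/2$-H\"older bound in $\theta$ by a second Gronwall argument, and finally assemble the Bardina--Jolis integrals by hand, including the jump at $\theta=t$. The paper's route is shorter and needs no regularity of $\theta\mapsto D_\theta X^h(t)$ beyond membership in $\mathfrak H_H$. Yours avoids reading the $dr$-integral as an $\mathfrak H_H(L_2^0)$-valued Bochner integral. It also actually justifies Malliavin differentiability through the Picard iterates, whereas the paper only invokes the chain rule. Finally, it yields extra pathwise information, at the price of the H\"older estimate.

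One step should be tightened. Gronwall applied to $\Delta(t)\le C+C\int_0^t\Delta(r)\,dr$ needs $\Delta$ to be finite a priori. Step 2 bounds the increments $\|D_\theta X^h(r)-D_\sigma X^h(r)\|_{L_2^0}$, but not their quotients by $|\theta-\sigma|^{\beta/2}$ as $\theta-\sigma\to0$.

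The simplest fix is to fix $\sigma<\theta$ and apply Gronwall to $e(r):=\|D_\theta X^h(r)-D_\sigma X^h(r)\|_{L_2^0}$ for $r\in[\theta,t]$. This function is a priori bounded by Step 2. By your decomposition $(a)$--$(c)$ taken at time $r$, it satisfies $e(r)\le C|\theta-\sigma|^{\beta/2}+C\int_\theta^r e(u)\,du$. Alternatively, run the estimate on the Picard iterates, where $\Delta_{n+1}(t)\le C+C\int_0^t\Delta_n(r)\,dr$ with $\Delta_0=0$ closes by induction without any a priori bound.
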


\begin{proof}
The semidiscrete mild solution is given by
\begin{align}
X^h(t)=S_h(t)P_hX_0+\int_0^tS_h(t-r)P_hF(X^h(r))\,dr+\int_0^tS_h(t-r)P_h\phi\,dB_Q^H(r).
\label{Xh_mild_D}
\end{align}
Since $X_0$ is independent of $B_Q^H$, we have $D_\theta X_0=0$. Furthermore, since the integrand of the stochastic convolution is deterministic,
\[
D_\theta
\left(
\int_0^t
S_h(t-r)P_h\phi\,dB_Q^H(r)
\right)
=
\mathbf 1_{[0,t]}(\theta)
S_h(t-\theta)P_h\phi.
\]
Using the Malliavin chain rule, $D_\theta F(X^h(r))=F'(X^h(r))D_\theta X^h(r)$,
which yields \eqref{DXh_equation}. We now estimate $DX^h(t)$ in the canonical space
$\mathfrak H_H(L_2^0)$. Writing \eqref{DXh_equation} as a function
of $\theta$, we obtain
\begin{align*}
DX^h(t)=\mathbf 1_{[0,t]}(\cdot)S_h(t-\cdot)P_h\phi+\int_0^t\mathbf 1_{[0,r]}(\cdot)
S_h(t-r)P_hF'(X^h(r))DX^h(r)\,dr.
\end{align*}
Taking the norm in
$L^4(\Omega;\mathfrak H_H(L_2^0))$ and using Minkowski's
inequality yields
\begin{align}
\left\|
DX^h(t)
\right\|_{L^4(\Omega;\mathfrak H_H(L_2^0))}
&\leq
\left\|
\mathbf 1_{[0,t]}(\cdot)
S_h(t-\cdot)P_h\phi
\right\|_{\mathfrak H_H(L_2^0)}
\nonumber\\
&+
\int_0^t
\Big\|
\mathbf 1_{[0,r]}(\cdot)
S_h(t-r)P_hF'(X^h(r))
DX^h(r)
\Big\|_{L^4(\Omega;\mathfrak H_H(L_2^0))}
\,dr.
\label{DXh_norm_step1}
\end{align}

By the estimate of the stochastic convolution established previously,
together with Assumption \ref{noise} and the stability of the finite
element projection,
\begin{equation}
\label{DXh_noise_term}
\left\|
\mathbf 1_{[0,t]}(\cdot)
S_h(t-\cdot)P_h\phi
\right\|_{\mathfrak H_H(L_2^0)}
\leq C.
\end{equation}

Moreover, since $S_h(t)$ is uniformly bounded and
$F'\in C_b(\mathcal H;\mathcal L(\mathcal H))$, we have
\begin{align}
&
\Big\|
\mathbf 1_{[0,r]}(\cdot)
S_h(t-r)P_hF'(X^h(r))
DX^h(r)
\Big\|_{L^4(\Omega;\mathfrak H_H(L_2^0))}
\nonumber\\
&\qquad\leq
C
\left\|
DX^h(r)
\right\|_{L^4(\Omega;\mathfrak H_H(L_2^0))}.
\label{DXh_nonlinear_term}
\end{align}
Indeed, $S_h(t-r)P_hF'(X^h(r))$ does not depend on the Malliavin
variable $\theta$ and acts as a uniformly bounded operator on the
spatial component of $\mathfrak H_H(L_2^0)$.

Combining \eqref{DXh_norm_step1}, \eqref{DXh_noise_term} and \eqref{DXh_nonlinear_term}, we get
\[
\left\|
DX^h(t)
\right\|_{L^4(\Omega;\mathfrak H_H(L_2^0))}
\leq
C
+
C
\int_0^t
\left\|
DX^h(r)
\right\|_{L^4(\Omega;\mathfrak H_H(L_2^0))}
\,dr.
\]
An application of Gronwall's inequality then yields
\eqref{DXh_canonical_bound}.
\end{proof}


We next establish an estimate for the fractional covariance
norm of functions defined cellwise on the temporal grid. This result
will be used to control the stochastic contribution to the accumulated
nonlinear drift defect.

\begin{lem}[Cellwise fractional covariance estimate]
\label{lem_cellwise_covariance}
Let $H\in\left(0,\frac12\right)$, and consider the uniform partition $0=t_0<t_1<\cdots<t_M=T$, $t_i=i\Delta t$, $\Delta t=\frac{T}{M}$. Set $I_i=[t_i,t_{i+1})$, $i=0,\ldots,M-1$. Let $V$ be a separable Hilbert space and, for
$m=1,\ldots,M$, let $g:[0,t_m]\longrightarrow V$ be piecewise continuous. Assume that, for some constant $C>0$,
\begin{equation}
\label{cell_g_bound_new}
\|g(r)\|_V
\leq
C\Delta t,
\qquad
0\leq r\leq t_m,
\end{equation}
and that, for $r,q\in I_i$,
\begin{equation}
\label{cell_g_increment_new}
\|g(r)-g(q)\|_V
\leq
C\left(
|r-q|
+
\Delta t\,|r-q|^\nu
\right),
\end{equation}
where $\nu>\frac12-H$. Then
\begin{equation}
\label{cell_covariance_main_new}
\|g\|_{\mathfrak H_H(V)}^2
\leq
C\Delta t^{2H+1},
\end{equation}
where $C$ is independent of $m$ and $\Delta t$.
\end{lem}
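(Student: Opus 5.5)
We use the representation \eqref{canonical_V_norm} of the canonical norm on $[0,t_m]$:
\[
\|g\|_{\mathfrak H_H(V)}^2=\frac{H(1-2H)}{2}\int_0^{t_m}\!\!\int_0^{t_m}\|g(u)-g(v)\|_V^2|u-v|^{2H-2}\,du\,dv+H\int_0^{t_m}\|g(s)\|_V^2\bigl(s^{2H-1}+(t_m-s)^{2H-1}\bigr)ds .
\]
As in the proof of Lemma \ref{stoch_quad_error}, we split the double integral into a diagonal part over $I_i\times I_i$ and an off-diagonal part over $I_i\times I_j$, $i\neq j$. Each of the three resulting contributions is then estimated separately, with the aim of showing each is $O(\Delta t^{2H+1})$ uniformly in $m$.

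\textbf{Boundary term.} By \eqref{cell_g_bound_new}, this term is at most $C\Delta t^2\int_0^{t_m}(s^{2H-1}+(t_m-s)^{2H-1})ds\le C\Delta t^2 T^{2H}$. Since $2\ge 2H+1$ and $\Delta t\le T$, this is bounded by $C\Delta t^{2H+1}$.

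\textbf{Off-diagonal term.} Here we only use $\|g(u)-g(v)\|_V\le 2C\Delta t$ from \eqref{cell_g_bound_new}. Together with the scaling identity $\int_{I_i}\int_{I_{i+k}}|u-v|^{2H-2}=\Delta t^{2H}\rho_H(k)$ and the summability $\sum_k\rho_H(k)<\infty$ (valid since $2H-2<-1$), both established in the proof of Lemma \ref{stoch_quad_error}, this gives
\[
C\Delta t^2\,\Delta t^{2H}\sum_{k=1}^{m-1}(m-k)\rho_H(k)\le C m\Delta t^{2H+2}\le CT\Delta t^{2H+1}.
\]

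\textbf{Diagonal term.} This is the step requiring the increment hypothesis \eqref{cell_g_increment_new}. Using $(a+b)^2\le 2a^2+2b^2$, each cell contributes
\[
C\int_{I_i}\int_{I_i}\bigl(|u-v|^{2H}+\Delta t^2|u-v|^{2\nu+2H-2}\bigr)du\,dv .
\]
The first integrand is bounded, and this piece is at most $C\Delta t^{2H+2}$. The second piece is integrable exactly because $2\nu+2H-2>-1$, i.e.\ $\nu>\frac12-H$. By scaling it equals $C\Delta t^2\,\Delta t^{2\nu+2H}$ times a finite constant. Without loss of generality $\nu\le1$ (otherwise $|r-q|^\nu\le \Delta t^{\nu-1}|r-q|$ on a cell, which reduces to the case $\nu=1$). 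Then $\Delta t^{2+2\nu+2H}\le C\Delta t^{2H+2}$ already holds for $\nu\ge0$. Summing over the $m\le T/\Delta t$ cells yields at most $C\Delta t^{2H+1}$.

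Combining the three bounds proves \eqref{cell_covariance_main_new}, with $C$ depending only on $H,\nu,T$ and the constants in the hypotheses, not on $m$ or $\Delta t$.

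\textbf{Main obstacle.} This is the diagonal near-singularity $|u-v|^{2H-2}$, which is not integrable over the square for $H<1/2$. The increment bound \eqref{cell_g_increment_new} with $\nu>\frac12-H$ is precisely what restores integrability there. We also check that the piecewise-continuous $g$, which may jump across cell interfaces, is handled by the off-diagonal estimate. That estimate uses only boundedness and the summability of $\rho_H$, so no continuity across cells is required. The adjacent-cell case $k=1$ is covered by the finiteness of $\rho_H(1)$.
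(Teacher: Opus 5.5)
Your proof is correct, but it is organized differently from the paper's.

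The paper first splits $g=g^{\mathrm{pc}}+\widetilde g$, where $g^{\mathrm{pc}}=\sum_i g(t_i^+)\mathbf 1_{I_i}$.
\begin{itemize}
\item The step part is computed through the fractional Gaussian noise covariance $\langle\mathbf 1_{I_i},\mathbf 1_{I_j}\rangle_{\mathfrak H_H}=\Delta t^{2H}\rho_H(i-j)$, with $\sum_k|\rho_H(k)|<\infty$. This gives the bound $C\Delta t^{2H}\sum_i\|g(t_i^+)\|_V^2$.
\item Only the remainder $\widetilde g$ is treated with \eqref{canonical_V_norm}, via exactly the diagonal/off-diagonal/boundary split you use. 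Its pointwise size $O(\Delta t)$ comes from \eqref{cell_g_increment_new} with $q=t_i^+$.
\end{itemize}

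You apply that same three-term split directly to $g$, invoking \eqref{cell_g_bound_new} for the off-diagonal and boundary terms. The remainder $\widetilde g$ still jumps at every grid point, so the paper relies on \eqref{canonical_V_norm} for discontinuous integrands just as you do. The decomposition therefore adds no rigor.

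What the decomposition buys is a cleaner separation of the hypotheses. The uniform bound is used only at the left endpoints, and only in the averaged form $\Delta t^{2H}\sum_i\|g(t_i^+)\|_V^2$. The in-cell size is controlled by the increment condition alone. Your route shows that, under the stated hypotheses, the step-function part is redundant and the proof reduces to a single computation.

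A minor point: the reduction to $\nu\le 1$ in your diagonal estimate is unnecessary, since $\Delta t^{2\nu}\le T^{2\nu}$ for every $\nu>0$.
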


\begin{proof}
For each $i=0,\ldots,m-1$, let $g_i=g(t_i^+)$, and define\\ $g^{\mathrm{pc}}(r)=\sum_{i=0}^{m-1}g_i\mathbf 1_{I_i}(r)$. For the canonical Hilbert space of the fractional Brownian motion,
\begin{equation}
\label{indicator_cov_new}
\left\langle
\mathbf 1_{I_i},
\mathbf 1_{I_j}
\right\rangle_{\mathfrak H_H}
=
\Delta t^{2H}\rho_H(i-j),
\end{equation}
where $\rho_H(k)=\frac12\left(|k+1|^{2H}-2|k|^{2H}+|k-1|^{2H}\right)$. Since $H<1/2$, $\rho_H(k)=H(2H-1)|k|^{2H-2}+
O(|k|^{2H-3})$ as $|k|\rightarrow\infty$. Therefore, $\sum_{k\in\mathbb Z}|\rho_H(k)|<\infty$.
Hence the inequality $|\langle g_i,g_j\rangle_V|\leq\frac12\left(\|g_i\|_V^2+\|g_j\|_V^2\right)$,
together with \eqref{indicator_cov_new} and \eqref{cell_g_bound_new}, gives
\begin{align}
\left\|
g^{\mathrm{pc}}
\right\|_{\mathfrak H_H(V)}^2
&=
\Delta t^{2H}
\sum_{i,j=0}^{m-1}
\langle g_i,g_j\rangle_V
\rho_H(i-j)\nonumber\\
&\leq
C\Delta t^{2H}
\sum_{i=0}^{m-1}
\|g_i\|_V^2
\nonumber\\
&\leq
C m\Delta t^{2H+2}\leq
C\Delta t^{2H+1}.
\label{pc_part_final_new}
\end{align}
Let $\widetilde g = g - g^{\mathrm{pc}}$, then  $g(r)= g^{\mathrm{pc}}(r) + \widetilde g(r)$, $0\leq r\leq t_m$. Hence for $r\in I_i$, $\widetilde g(r) =g(r)-g_i$. Taking $q=t_i^+$ in \eqref{cell_g_increment_new}, we obtain
\begin{equation}
\label{g_tilde_point_new}
\|\widetilde g(r)\|_V
\leq
C
\left[
(r-t_i)
+
\Delta t(r-t_i)^\nu
\right].
\end{equation}

Moreover, for $r,q\in I_i$,
\begin{equation}
\label{g_tilde_increment_new}
\|\widetilde g(r)-\widetilde g(q)\|_V
\leq
C
\left(
|r-q|
+
\Delta t|r-q|^\nu
\right).
\end{equation}

By the increment characterization of the canonical Hilbert space
for $H<1/2$,
\begin{align*}
\|\widetilde g\|_{\mathfrak H_H(V)}^2
\leq {}&
C_H
\int_0^{t_m}\int_0^{t_m}
\|\widetilde g(r)-\widetilde g(q)\|_V^2
|r-q|^{2H-2}\,dq\,dr
\nonumber\\
&+
C_H
\int_0^{t_m}
\|\widetilde g(r)\|_V^2
\left[
r^{2H-1}
+
(t_m-r)^{2H-1}
\right]dr\nonumber\\
\leq {}& C_H\sum_{i=0}^{m-1}
\int_{I_i}\int_{I_i}
\|\widetilde g(r)-\widetilde g(q)\|_V^2
|r-q|^{2H-2}\,dq\,dr \nonumber\\
&+C_H\sum_{\substack{i,j=0\\i\neq j}}^{m-1}\int_{I_i}\int_{I_j}\|\widetilde g(r)-\widetilde g(q)\|_V^2
|r-q|^{2H-2}\,dq\,dr
\nonumber\\
&+
C_H
\int_0^{t_m}
\|\widetilde g(r)\|_V^2
\left[
r^{2H-1}
+
(t_m-r)^{2H-1}
\right]dr\nonumber\\
&=: \mathcal J_{1, \mathrm{diag}} + \mathcal J_{1, \mathrm{off}} + \mathcal J_{2}.
\end{align*}
We first consider pairs belonging to the same cell. Since $\nu>\frac12-H$, we have $2H+2\nu-2>-1$. Thus from \eqref{g_tilde_increment_new},
\begin{align}
\mathcal J_{1, \mathrm{diag}}& := 
C_H\sum_{i=0}^{m-1}
\int_{I_i}\int_{I_i}
\|\widetilde g(r)-\widetilde g(q)\|_V^2
|r-q|^{2H-2}\,dq\,dr
\nonumber\\
&\leq
C
\sum_{i=0}^{m-1}
\int_{I_i}\int_{I_i}
|r-q|^{2H}\,dq\,dr
\nonumber\\
&\quad+
C\Delta t^2
\sum_{i=0}^{m-1}
\int_{I_i}\int_{I_i}
|r-q|^{2H+2\nu-2}\,dq\,dr\nonumber\\
&\leq Cm\Delta t^{2H+2}+ Cm\Delta t^{2H+2\nu+2}\nonumber\\
&\leq C m\Delta t^{2H+2}\leq C \Delta t^{2H+1}.
\label{same_cell_final_new}
\end{align}
We now estimate the contribution corresponding to
$r\in I_i$ and $q\in I_j$ with $i\neq j$. From
\eqref{g_tilde_point_new}, since $0\le r-t_i\le\Delta t$, we have $\|\widetilde g(r)\|_V
\le C\left(\Delta t+\Delta t^{1+\nu}\right)\le C\Delta t$.
Hence, for $r\in I_i$ and $q\in I_j$, $i\neq j$, $\|\widetilde g(r)-\widetilde g(q)\|_V
\le \|\widetilde g(r)\|_V+\|\widetilde g(q)\|_V\le C\Delta t$. Therefore, following a similar reasoning as the estimate \eqref{L21_off_final}, we get
\begin{align}
\mathcal J_{1,\mathrm{off}}
&:=
C_H
\sum_{\substack{i,j=0\\i\neq j}}^{m-1}
\int_{I_i}\int_{I_j}
\|\widetilde g(r)-\widetilde g(q)\|_V^2
|r-q|^{2H-2}\,dq\,dr
\nonumber\\
&\le
C\Delta t^2
\sum_{\substack{i,j=0\\i\neq j}}^{m-1}
\int_{I_i}\int_{I_j}
|r-q|^{2H-2}\,dq\,dr\nonumber\\
&\le
Cm\Delta t^{2H+2}\le
C\Delta t^{2H+1}.
\label{rem_off_final}
\end{align}
Finally, since $H<1/2$, we have $2H+1<2$. Thus the last contribution satisfies
\begin{align}
\mathcal J_{2}&:= C_H\int_0^{t_m}\|\widetilde g(r)\|_V^2\left[r^{2H-1}+(t_m-r)^{2H-1}\right]dr
\nonumber\\
&\leq
C\Delta t^2\leq
C\Delta t^{2H+1}.
\label{bound_cont}
\end{align}
Combining \eqref{pc_part_final_new}, \eqref{same_cell_final_new},
\eqref{rem_off_final}, and \eqref{bound_cont} yields
\eqref{cell_covariance_main_new}.
\end{proof}

With the preceding auxiliary estimates at hand, we are now in a
position to estimate the accumulated nonlinear drift defect.

\begin{lem}[Accumulated nonlinear drift defect]
\label{improved_drift_defect}

Under Assumptions \ref{initdata}--\ref{non}, for every \\
$m=1,\ldots,M$, the accumulated nonlinear drift defect satisfies
\begin{equation}
\label{drift_defect_final}
\left\|
\sum_{i=0}^{m-1}
\int_{t_i}^{t_{i+1}}
S_h(t_m-s)P_h
\left[
F(X^h(s))-F(X^h(t_i))
\right]ds
\right\|_{L^2(\Omega;\mathcal H)}
\leq
C\Delta t^{\frac{2H+\beta}{2}},
\end{equation}
where $C$ is independent of $h$, $\Delta t$, and $m$.
\end{lem}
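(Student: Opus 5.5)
We will prove \eqref{drift_defect_final} by a second-order Taylor expansion of $F$ around $X^h(t_i)$ on each cell $I_i=[t_i,t_{i+1})$, followed by a further splitting of the increment $X^h(s)-X^h(t_i)$ through the semidiscrete mild formulation \eqref{soldis}. Concretely, for $s\in I_i$,
\[
F(X^h(s))-F(X^h(t_i))=F'(X^h(t_i))\bigl(X^h(s)-X^h(t_i)\bigr)+R_i(s),
\qquad
\|R_i(s)\|\leq \tfrac{L}{2}\|X^h(s)-X^h(t_i)\|^2,
\]
by Assumption \ref{non}. The remainder contributes at most $C\sum_i\int_{I_i}\mathbb E\|X^h(s)-X^h(t_i)\|^4)^{1/2}ds$. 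Since the stochastic part of $X^h$ is Gaussian and the rest is Lipschitz-controlled, the $L^4$ analogue of the temporal estimate \eqref{timeregfrac} in Theorem \ref{regfrac} gives $\|X^h(s)-X^h(t_i)\|_{L^4}\leq C\Delta t^H$. The remainder is therefore $O(\Delta t^{2H})$. This is $\leq C\Delta t^{\frac{2H+\beta}{2}}$ because $\beta\leq 1\le 2H+\beta$ forces $2H\geq \frac{2H+\beta}{2}$ exactly when $2H\ge\beta$. If instead $\beta>2H$, the remainder would need the refined treatment below as well, or we accept $\min$ and verify the exponent carefully.

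Next we write $X^h(s)-X^h(t_i)=(S_h(s-t_i)-I)X^h(t_i)+\int_{t_i}^sS_h(s-r)P_hF(X^h(r))dr+\int_{t_i}^sS_h(s-r)P_h\phi\,dB_Q^H(r)$. The first two pieces, inserted into $\sum_i\int_{I_i}S_h(t_m-s)P_hF'(X^h(t_i))(\cdot)\,ds$, are handled deterministically. The drift piece is $O(\Delta t)$ pointwise. For the semigroup piece, we use the spatial regularity \eqref{sparegfrac} together with Proposition \ref{semigroup} to get $\|(S_h(s-t_i)-I)X^h(t_i)\|_{L^2}\le C\Delta t^{\frac{2H+\beta}{2}}$. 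Summing over cells with the bounded semigroup gives a total of $C\Delta t^{\frac{2H+\beta}{2}}$.

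The main obstacle is the stochastic piece
\[
\mathcal S_m=\sum_{i=0}^{m-1}\int_{I_i}S_h(t_m-s)P_hF'(X^h(t_i))\int_{t_i}^sS_h(s-r)P_h\phi\,dB_Q^H(r)\,ds .
\]
A naive bound only yields $\Delta t^H$, so the cancellations across cells must be exploited. We will use stochastic Fubini to rewrite it as a single fractional integral $\int_0^{t_m}\Psi(r)\,dB_Q^H(r)$ with random, anticipating kernel
\[
\Psi(r)=\int_r^{t_{i+1}}S_h(t_m-s)P_hF'(X^h(t_i))S_h(s-r)P_h\phi\,ds,\qquad r\in I_i .
\]
Because $F'(X^h(t_i))$ depends on the noise, this integral must be interpreted as a divergence integral. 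The product rule then produces a correction term involving $D^H F'(X^h(t_i))=F''(X^h(t_i))D^HX^h(t_i)$, which is a Lebesgue-type trace. That correction is bounded by the cellwise size $\Delta t$ of $\Psi$ together with Lemma \ref{lem_malliavin_Xh}, and should give $O(\Delta t^{2H})$ or better.

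For the divergence part we apply \eqref{divergence_estimate} with $V=\mathcal H$ and need two bounds:
\[
\mathbb E\|\Psi\|^2_{\mathfrak H_H(L_2^0)}\le C\Delta t^{2H+1}
\qquad\text{and}\qquad
\mathbb E\|D^H\Psi\|^2\le C\Delta t^{2H+1}.
\]
Both follow from Lemma \ref{lem_cellwise_covariance}, applied pathwise and then in expectation. Its hypotheses are checked as follows:
\begin{itemize}
\item $\|\Psi(r)\|\le C(t_{i+1}-r)\le C\Delta t$, by bounded semigroups, bounded $F'$ and Assumption \ref{noise}.
\item The increment bound \eqref{cell_g_increment_new} holds with $\nu=\beta/2$-type regularity, coming from the variation of $S_h(s-r)P_h\phi$ in $r$ via Proposition \ref{semigroup} with $\gamma=\frac{1+\beta}{2}$ as in Lemma \ref{stoch_quad_error}. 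Here $\nu>\frac12-H$ is guaranteed by $\beta>1-2H$.
\end{itemize}
For $D^H\Psi$ the same structure appears with an extra factor $F''(X^h(t_i))D^HX^h(t_i)$, controlled in $L^4$ by Lemma \ref{lem_malliavin_Xh} through Cauchy--Schwarz. This yields $\|\mathcal S_m\|_{L^2}\le C\Delta t^{H+\frac12}\le C\Delta t^{\frac{2H+\beta}{2}}$, since $\beta\le1$.

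The delicate point we expect is justifying the $\mathfrak H_H\otimes\mathfrak H_H$ norm of $D^H\Psi$ via the cellwise lemma. We will try to do this by freezing the second Malliavin variable, applying Lemma \ref{lem_cellwise_covariance} in the first, and then using \eqref{DXh_canonical_bound} in the second.
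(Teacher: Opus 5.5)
Your proposal has a genuine gap at the heart of the lemma. Two terms in your decomposition can only be bounded by \(C\Delta t^{2H}\). The first is the quadratic Taylor remainder, which you bound in norm by \(\frac L2\|X^h(s)-X^h(t_i)\|^2\). The second is the product-rule correction \(F''(X^h(t_i))\bigl[\langle D^HX^h(t_i),u_s\rangle_{\mathfrak H_H}\bigr]\), with \(u_s=\mathbf 1_{[t_i,s]}S_h(s-\cdot)P_h\phi\), which appears when \(F'(X^h(t_i))\) is moved inside the Wiener integral.

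But \(\Delta t^{2H}\le C\Delta t^{\frac{2H+\beta}{2}}\) only when \(\beta\le 2H\). Together with \(\beta>1-2H\) this forces \(H>\frac14\). In particular it fails for every \(H<\frac12\) at \(\beta=1\), which is exactly the case giving the rate \(H+\frac12\).

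Your hope of ``\(O(\Delta t^{2H})\) or better'' for the correction cannot come from size estimates. For \(H<\frac12\), fBm increments are negatively correlated with the past. Already in the scalar model, where \(D^HX(t_i)\approx\mathbf 1_{[0,t_i]}\), one has \(\langle\mathbf 1_{[0,t_i]},\mathbf 1_{[t_i,s]}\rangle_{\mathfrak H_H}=\frac12(s^{2H}-t_i^{2H}-(s-t_i)^{2H})\approx-\frac12(s-t_i)^{2H}\) for \(t_i\gg s-t_i\). So after \(\sum_i\int_{I_i}\cdots\,ds\) this term is genuinely of order \(\Delta t^{2H}\). The quadratic term \(\tfrac12F''(X(t_i))(\beta^H(s)-\beta^H(t_i))^2\) is of the same order, and its leading part \(\tfrac12F''(X(t_i))(s-t_i)^{2H}\) has the opposite sign.

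The missing idea is that these two contributions cancel at leading order. Any argument that bounds them separately in norm, as yours does, is capped at \(\Delta t^{2H}\). A minor further point: your \(L^4\) increment bound also needs \(X_0\in L^4\), which Assumption \ref{initdata} does not provide.

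For comparison, the paper never creates a remainder. It uses the exact identity \(F(X^h(s))-F(X^h(t_i))=I_{i,s}^h[X^h(s)-X^h(t_i)]\) with \(I_{i,s}^h=\int_0^1F'(X^h(t_i)+\lambda[X^h(s)-X^h(t_i)])\,d\lambda\). It then splits the increment through the mild form as you do and keeps the random operator \(I_{i,s}^h\) inside the kernel \(K_m(r)=\int_r^{t_{i+1}}S_h(t_m-s)P_hI_{i,s}^hS_h(s-r)P_h\phi\,ds\). The stochastic part is identified with \(\delta^H(K_m)\), and \(\mathbb E\|K_m\|^2\) and \(\mathbb E\|DK_m\|^2\) are bounded by \(C\Delta t^{2H+1}\) via Lemma \ref{lem_cellwise_covariance}. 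This is the same divergence/cellwise machinery you plan to use.

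Be aware, however, that the paper's step ``by the stochastic Fubini theorem'' omits precisely the correction you flagged. Since \(I_{i,s}^h\) is random, \(I_{i,s}^h\,\delta^H(u_s)=\delta^H(I_{i,s}^hu_s)+\langle D^HI_{i,s}^h,u_s\rangle_{\mathfrak H_H}\) (suitably contracted). The two parts of this correction, coming from \((1-\lambda)D^HX^h(t_i)\) and from \(\lambda D^HX^h(s)\), are each of order \((s-t_i)^{2H}\) with opposite signs.

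So adopting the exact integral form removes your remainder term but not the underlying difficulty. In either formulation one must show that the order-\(\Delta t^{2H}\) contributions cancel and quantify what survives. Neither your proposal nor the paper's written argument does this.
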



\begin{proof}

For $s\in[t_i,t_{i+1}]$, define
\begin{equation}
\label{Ihis_def}
I_{i,s}^h=\int_0^1F'\left(X^h(t_i)+\lambda\left[X^h(s)-X^h(t_i)\right]\right)d\lambda.
\end{equation}

The fundamental theorem of calculus in Banach spaces gives the
exact identity
\begin{equation}
\label{Taylor_integral_exact}
F(X^h(s))-F(X^h(t_i))
=
I_{i,s}^h
\left[
X^h(s)-X^h(t_i)
\right].
\end{equation}
Since $F'\in C_b$, we have $\|I_{i,s}^h\|_{L(\mathcal H)}\leq C$ uniformly in $i$, $s$, and $h$. For $s\in[t_i,t_{i+1}]$, the mild representation gives
\begin{align}
X^h(s)-X^h(t_i)&=\left[S_h(s-t_i)-I\right]X^h(t_i)\nonumber\\
&\quad+\int_{t_i}^{s}S_h(s-r)P_hF(X^h(r))\,dr
+\int_{t_i}^{s}S_h(s-r)P_h\phi\,\,d B_Q^H(r).
\label{increment_DI_NI}
\end{align}
Substituting \eqref{increment_DI_NI} into
\eqref{Taylor_integral_exact} yields
\begin{align}
\label{EF_decomp}
\mathcal E_m^F&:= \sum_{i=0}^{m-1}
\int_{t_i}^{t_{i+1}}
S_h(t_m-s)P_h
\left[
F(X^h(s))-F(X^h(t_i))
\right]ds\nonumber\\
&= \sum_{i=0}^{m-1}
\int_{t_i}^{t_{i+1}}
S_h(t_m-s)I_{i,s}^h\left[S_h(s-t_i)-I\right]X^h(t_i)\,ds\nonumber\\
&\quad+\sum_{i=0}^{m-1}
\int_{t_i}^{t_{i+1}}
S_h(t_m-s)I_{i,s}^h\int_{t_i}^{s}S_h(s-r)P_hF(X^h(r))\,dr\,ds\nonumber\\
&\quad+\sum_{i=0}^{m-1}
\int_{t_i}^{t_{i+1}}
S_h(t_m-s)I_{i,s}^h\int_{t_i}^{s}S_h(s-r)P_h\phi\,dB_Q^H(r)\,ds\nonumber\\
& =: \mathcal E_{m,1}^F+\mathcal E_{m,2}^F+\mathcal E_{m,3}^F.
\end{align}
Taking the norm on both sides of \eqref{EF_decomp} and using the triangle inequality yields
\begin{equation*}
\left\|\mathcal E_m^F\right\|_{L^2(\Omega;\mathcal H)}
\leq
\left\|\mathcal E_{m,1}^F\right\|_{L^2(\Omega;\mathcal H)}
+
\left\|\mathcal E_{m,2}^F\right\|_{L^2(\Omega;\mathcal H)}
+
\left\|\mathcal E_{m,3}^F\right\|_{L^2(\Omega;\mathcal H)}.
\end{equation*}
Using the triangle inequality, the smoothing properties of the semigroup,
the semidiscrete version of Theorem \ref{regfrac} \eqref{sparegfrac} and Proposition \ref{semigroup} with $\gamma=\frac{2H+\beta}{2}$ yields
\begin{align}
\left\|\mathcal E_{m,1}^F\right\|_{L^2(\Omega;\mathcal H)}
&\leq
\sum_{i=0}^{m-1}
\int_{t_i}^{t_{i+1}}
\left\|
S_h(t_m-s)I_{i,s}^h\left[S_h(s-t_i)-I\right]X^h(t_i)
\right\|_{L^2(\Omega;\mathcal H)}
\,ds
\nonumber\\
&\leq
\sum_{i=0}^{m-1}
\int_{t_i}^{t_{i+1}}
\left\|
S_h(t_m-s)I_{i,s}^h
\right\|_{L(\mathcal H)}
\nonumber\\
&\quad\times
\left\|
\left[S_h(s-t_i)-I\right]
A_h^{-\frac{2H+\beta}{2}}
\right\|_{L(\mathcal H)}
\left\|
A_h^{\frac{2H+\beta}{2}}X^h(t_i)
\right\|_{L^2(\Omega;\mathcal H)}
\,ds
\nonumber\\
&\leq
C
\sum_{i=0}^{m-1}
\int_{t_i}^{t_{i+1}}
(s-t_i)^{\frac{2H+\beta}{2}}
\,ds\leq
C\Delta t^{\frac{2H+\beta}{2}}.
\label{norm_EF_decomp1}
\end{align}
Using the triangle inequality, the smoothing properties of the semigroup, Assumption \ref{non} \eqref{ass_non_lin3} and the semidiscrete version of \eqref{mean_square_bound}, it holds that
\begin{align}
\left\|\mathcal E_{m,2}^F\right\|_{L^2(\Omega;\mathcal H)}
&\leq
\sum_{i=0}^{m-1}
\int_{t_i}^{t_{i+1}}
\int_{t_i}^{s}
\left\|
S_h(t_m-s)I_{i,s}^h
S_h(s-r)P_hF(X^h(r))
\right\|_{L^2(\Omega;\mathcal H)}
\,dr\,ds
\nonumber\\
&\leq
C
\sum_{i=0}^{m-1}
\int_{t_i}^{t_{i+1}}
(s-t_i)\,ds
\nonumber\\
&\leq
C\Delta t\leq
C\Delta t^{\frac{2H+\beta}{2}}.
\label{norm_EF_decomp2}
\end{align}
By the stochastic Fubini theorem,
\[
\begin{aligned}
 \mathcal E_{m,3}^F &=
\sum_{i=0}^{m-1}
\int_{t_i}^{t_{i+1}}
\left(
\int_{t_i}^{s}
S_h(t_m-s)I_{i,s}^{h}
S_h(s-r)P_h\phi\,dB_Q^H(r)
\right)ds
\nonumber\\
&=
\sum_{i=0}^{m-1}
\int_{t_i}^{t_{i+1}}
\left(
\int_{r}^{t_{i+1}}
S_h(t_m-s)I_{i,s}^{h}
S_h(s-r)P_h\phi\,ds
\right)
\delta B_Q^H(r)\nonumber\\
&=\sum_{i=0}^{m-1}
\int_{t_i}^{t_{i+1}}K_m(r)\,\delta B_Q^H(r),   
\end{aligned}
\]
where, for $r\in I_i$,
\begin{equation*}
K_m(r)
=
\int_r^{t_{i+1}}
S_h(t_m-s)P_h
I_{i,s}^h
S_h(s-r)P_h\phi\,ds.
\end{equation*}
Using the smoothing properties of the semigroup and Assumption \ref{noise} with $\kappa=0$, we obtain
\[
\begin{aligned}
\|K_m(r)\|_{L^2(\Omega;L_2^0)}
&\leq
C
\int_r^{t_{i+1}}
\|S_h(s-r)P_h\phi\|_{L_2^0}ds
\nonumber\\
&\leq
C(t_{i+1}-r)\leq
C\Delta t.
\end{aligned}
\]
Let $q<r$ belong to the same interval $I_i$. the triangle inequality, the semigroup property, Proposition \ref{semigroup} with
$\gamma=\frac{1+\beta}{2}$, the boundedness of the
semigroup, the stability of the finite element projection, and
Assumption \ref{noise} yield
\[
\begin{aligned}
&\|K_m(r)-K_m(q)\|_{L^2(\Omega;L_2^0)}\\
\leq {}&
\left\|\int_q^r
S_h(t_m-s)P_hI_{i,s}^h
S_h(s-q)P_h\phi\,ds\right\|_{L^2(\Omega;L_2^0)}
\nonumber\\
&+
\left\|\int_r^{t_{i+1}}
S_h(t_m-s)P_hI_{i,s}^h
S_h(s-r)\left(I-S_h(r-q)\right)P_h\phi\,ds\right\|_{L^2(\Omega;L_2^0)}
\nonumber\\
&\leq
C
\left(
|r-q|
+
\Delta t|r-q|^{\frac{1+\beta}{2}}
\right).
\label{Km_I_increment}
\end{aligned}
\]
Since $\frac{1+\beta}{2}>\frac12-H$, Lemma \ref{lem_cellwise_covariance} applied with
$V=L^2(\Omega;L_2^0)$ gives
\begin{equation}
\label{Km_I_HH}
\mathbb E
\|K_m\|_{\mathfrak H_H(L_2^0)}^2
\leq
C\Delta t^{2H+1}.
\end{equation}


We next estimate $D K_m$. From \eqref{Ihis_def} and the
Malliavin chain rule, we have
\begin{equation*}
D_\theta I_{i,s}^h
=
\int_0^1
F''(Y_{i,s,\lambda}^h)
\left[
D_\theta Y_{i,s,\lambda}^h,\cdot
\right]d\lambda,
\end{equation*}
where $Y_{i,s,\lambda}^h=(1-\lambda)X^h(t_i)+\lambda X^h(s)$, and $D_\theta Y_{i,s,\lambda}^h=(1-\lambda)D_\theta X^h(t_i)+\lambda D_\theta X^h(s)$.
Therefore, using the boundedness of $F''$ and Lemma
\ref{lem_malliavin_Xh}, we obtain
\begin{align*}
\left\|
D I_{i,s}^h
\right\|_{L^4(\Omega;
\mathfrak H_H(L(\mathcal H)))}
&\leq
C\int_0^1
\left\|
D Y_{i,s,\lambda}^h
\right\|_{L^4(\Omega;
\mathfrak H_H(L_2^0))}
\,d\lambda
\nonumber\\
&\leq
C\left(
\left\|
D X^h(t_i)
\right\|_{L^4(\Omega;
\mathfrak H_H(L_2^0))}
+
\left\|
D X^h(s)
\right\|_{L^4(\Omega;
\mathfrak H_H(L_2^0))}
\right)
\nonumber\\
&\leq C.
\end{align*}
Hence,
\begin{equation}
\label{DIhis_sup}
\sup_{\substack{0\leq i\leq m-1\\
t_i\leq s\leq t_{i+1}}}
\left\|
D I_{i,s}^h
\right\|_{L^4(\Omega;
\mathfrak H_H(L(\mathcal H)))}
\leq C.
\end{equation}

Consequently, for $r\in I_i$, differentiating $K_m(r)$ with
respect to the underlying fractional Brownian motion yields
\begin{equation*}
D_\theta K_m(r)
=
\int_r^{t_{i+1}}
S_h(t_m-s)P_h
D_\theta I_{i,s}^h
S_h(s-r)P_h\phi\,ds.
\end{equation*}
Using Minkowski's inequality, the boundedness of the semigroup,
Assumption \ref{noise}, and \eqref{DIhis_sup}, we obtain
\begin{align}
\left\|
D K_m(r)
\right\|_{L^2(\Omega;
\mathfrak H_H(L_2^0))}
&\leq
C\int_r^{t_{i+1}}
\left\|
D I_{i,s}^h
\right\|_{L^2(\Omega;
\mathfrak H_H(L(\mathcal H)))}
\,ds
\nonumber\\
&\leq
C(t_{i+1}-r)\leq
C\Delta t.
\label{DKm_pointwise}
\end{align}

It remains to estimate the fractional covariance norm of $D K_m$.
For $r,q\in I_i$, repeating the same decomposition used above for
$K_m$, together with \eqref{DIhis_sup}, yields
\begin{equation}
\label{DKm_increment}
\left\|
D K_m(r)-D K_m(q)
\right\|_{L^2(\Omega;
\mathfrak H_H(L_2^0))}
\leq
C\left(
|r-q|
+
\Delta t\,|r-q|^{\frac{1+\beta}{2}}
\right),
\end{equation}
Therefore, Lemma \ref{lem_cellwise_covariance}, applied with $V=L^2\left(\Omega;\mathfrak H_H(L_2^0)\right)$, together with \eqref{DKm_pointwise} and
\eqref{DKm_increment}, gives
\begin{equation}
\label{DKm_canonical_final}
\left\|
D K_m
\right\|_{L^2(\Omega;
\mathfrak H_H\otimes\mathfrak H_H(L_2^0))}^2\leq
C\Delta t^{2H+1}.
\end{equation}
Combining \eqref{Km_I_HH} and \eqref{DKm_canonical_final} with \eqref{divergence_estimate}, we obtain
\begin{align*}
\|\mathcal E_{m,3}^F\|_{L^2(\Omega;\mathcal H)}^2
&=
\|\delta^H(K_m)\|_{L^2(\Omega;\mathcal H)}^2
\nonumber\\
&\leq
C
\left[
\mathbb E
\|K_m\|_{\mathfrak H_H(L_2^0)}^2
+
\mathbb E
\|DK_m\|_{
\mathfrak H_H\otimes
\mathfrak H_H(L_2^0)
}^2
\right]
\nonumber\\
&\leq
C\Delta t^{2H+1}\leq C\Delta t^{2H+\beta}.
\end{align*}

Hence 
\begin{align}
\|\mathcal E_{m,3}^F\|_{L^2(\Omega;\mathcal H)}
\leq C\Delta t^{\frac{2H+\beta}2}.
\label{norm_EF_decomp3}
\end{align}

Finally, combining \eqref{norm_EF_decomp1}, \eqref{norm_EF_decomp2} and
\eqref{norm_EF_decomp3} proves \eqref{drift_defect_final}
\end{proof}

\subsection{Strong convergence of the fully discrete scheme}

We are now in a position to establish the main convergence result.

\begin{thm}[Main strong convergence result]
\label{main}

Let $H\in\left(0,\frac12\right)$, and $1-2H<\beta\leq1$. Assume that Assumptions \ref{initdata}--\ref{non} hold. Let $X(t_m)$ be the exact mild solution of \eqref{pb} at $t=t_m$ and let $X_m^h$ be the fully discrete approximation defined by
\eqref{SETD1}. Then 
\begin{equation*}
\left\|X(t_m)-X_m^h\right\|_{L^2(\Omega;\mathcal H)}
\leq C\left(h^{2H+\beta}+\Delta t^{\frac{2H+\beta}{2}}\right).
\end{equation*}
The constant $C$ is independent of $h$, $\Delta t$, and $m$.
\end{thm}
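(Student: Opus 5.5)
We split the error through the semidiscrete solution:
\[
\|X(t_m)-X_m^h\|_{L^2(\Omega;\mathcal H)}\leq \|X(t_m)-X^h(t_m)\|_{L^2(\Omega;\mathcal H)}+\|X^h(t_m)-X_m^h\|_{L^2(\Omega;\mathcal H)}.
\]
The first term is bounded by $Ch^{2H+\beta}$ by Theorem \ref{spaconv}. It therefore remains to show that the temporal error $e_m^h:=X^h(t_m)-X_m^h$ satisfies $\|e_m^h\|_{L^2(\Omega;\mathcal H)}\leq C\Delta t^{\frac{2H+\beta}{2}}$, with $C$ independent of $h$.

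Subtracting \eqref{SETD1} from \eqref{soldis}, the initial terms cancel, since both use $X_0^h=P_hX_0$, and we obtain
\begin{align*}
e_m^h={}&\sum_{i=0}^{m-1}\int_{t_i}^{t_{i+1}}S_h(t_m-s)P_h\left[F(X^h(s))-F(X^h(t_i))\right]ds\\
&+\sum_{i=0}^{m-1}\int_{t_i}^{t_{i+1}}S_h(t_m-s)P_h\left[F(X^h(t_i))-F(X_i^h)\right]ds\\
&+\int_0^{t_m}\left[S_h(t_m-s)-S_h(t_m-[s])\right]P_h\phi\,dB_Q^H(s).
\end{align*}
We bound the three terms as follows.
\begin{itemize}
\item The first term is exactly the accumulated nonlinear drift defect. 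By Lemma \ref{improved_drift_defect} it is at most $C\Delta t^{\frac{2H+\beta}{2}}$.
\item The third term is the stochastic quadrature error. By Lemma \ref{stoch_quad_error} it is also at most $C\Delta t^{\frac{2H+\beta}{2}}$.
\item For the second term we use the uniform boundedness of $S_h$ and of $P_h$ together with the Lipschitz bound \eqref{ass_non_lin_lipschitz}. This gives the bound $CL\,\Delta t\sum_{i=0}^{m-1}\|e_i^h\|_{L^2(\Omega;\mathcal H)}$.
\end{itemize}

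Collecting these bounds, we get
\[
\|e_m^h\|_{L^2(\Omega;\mathcal H)}\leq C\Delta t^{\frac{2H+\beta}{2}}+C\Delta t\sum_{i=0}^{m-1}\|e_i^h\|_{L^2(\Omega;\mathcal H)},\qquad e_0^h=0.
\]
The discrete Gronwall inequality then yields $\|e_m^h\|_{L^2(\Omega;\mathcal H)}\leq Ce^{CT}\Delta t^{\frac{2H+\beta}{2}}$. The constant is uniform in $m$ and $h$, because every constant in Lemmas \ref{stoch_quad_error}, \ref{lem_malliavin_Xh} and \ref{improved_drift_defect} is independent of $h$. Combining this with the spatial estimate proves the theorem.

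The genuinely delicate step is the drift defect. A naive estimate using only the H\"older regularity \eqref{timeregfrac} would give just $\Delta t^H$. The improved rate comes from Lemma \ref{improved_drift_defect}, via the divergence-integral treatment of the random Taylor kernel, and we take that lemma as given. At the level of this theorem, the only remaining care is to check two points. First, the discrete semigroup estimates used in Lemmas \ref{stoch_quad_error} and \ref{improved_drift_defect} must hold uniformly in $h$. Second, the semidiscrete analogue of Theorem \ref{regfrac} must apply, which uses $\|A_h^{\frac{2H+\beta}{2}}P_hX_0\|\leq C\|A^{\frac{2H+\beta}{2}}X_0\|$ for $2H+\beta\leq 2$, a standard finite element norm equivalence.
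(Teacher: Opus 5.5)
Your proposal is correct and follows the paper's proof essentially step for step: the same split through the semidiscrete solution $X^h(t_m)$, with Theorem~\ref{spaconv} for the spatial part, the same three-term temporal decomposition bounded by Lemma~\ref{improved_drift_defect}, the Lipschitz estimate and Lemma~\ref{stoch_quad_error}, and the same closing discrete Gronwall argument. As in the paper, all the genuine difficulty (getting beyond $\Delta t^{H}$) sits in Lemma~\ref{improved_drift_defect}, and your remarks on $h$-uniformity only make explicit what the paper uses implicitly.
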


\begin{proof}

By the triangle inequality,
\begin{align*}
\left\|
X(t_m)-X_m^h
\right\|_{L^2(\Omega;\mathcal H)}
\leq
\left\|X(t_m)-X^h(t_m)\right\|_{L^2(\Omega;\mathcal H)}+
\left\|X^h(t_m)-X_m^h\right\|_{L^2(\Omega;\mathcal H)}.
\end{align*}

By Theorem \ref{spaconv},
\begin{equation}
\label{space_error_used}
\left\|
X(t_m)-X^h(t_m)
\right\|_{L^2(\Omega;\mathcal H)}
\leq
C h^{2H+\beta}.
\end{equation}

It remains to estimate the temporal error. Subtracting \eqref{SETD1} from \eqref{soldis} and then taking the $L^2(\Omega;\mathcal H)$-norm, the triangle inequality yields
\begin{align}
\label{temporal_split}
\left\|
X^h(t_m)-X_m^h
\right\|_{L^2(\Omega;\mathcal H)}
&\leq
\left\|
\int_0^{t_m}
S_h(t_m-s)P_h
\left[
F(X^h(s))-F(X_{[s]^m}^h)
\right]ds
\right\|_{L^2(\Omega;\mathcal H)}
\nonumber\\
&\quad+
\left\|
\int_0^{t_m}
\left[
S_h(t_m-s)-S_h(t_m-[s])
\right]
P_h\phi\,dB_Q^H(s)
\right\|_{L^2(\Omega;\mathcal H)}\nonumber\\
&=:
L_1+L_2,
\end{align}
For $L_1$, we add and subtract $F(X^h([s]))$:
\begin{align*}
L_1
&\leq
\left\|
\int_0^{t_m}
S_h(t_m-s)P_h
\left[
F(X^h(s))-F(X^h([s]))
\right]ds
\right\|_{L^2}
\nonumber\\
&\quad+
\left\|
\int_0^{t_m}
S_h(t_m-s)P_h
\left[
F(X^h([s]))-F(X_{[s]^m}^h)
\right]ds
\right\|_{L^2}
\nonumber\\
&=:L_{11}+L_{12}.
\end{align*}
By Lemma \ref{improved_drift_defect}, $\displaystyle L_{11}
\leq C\Delta t^{\frac{2H+\beta}{2}}$. For $L_{12}$, the global Lipschitz continuity of $F$ \eqref{ass_non_lin_lipschitz} and the boundedness of $S_h(t)$ yield $\displaystyle L_{12}\leq C\sum_{i=0}^{m-1}\int_{t_i}^{t_{i+1}}\left\|X^h(t_i)-X_i^h\right\|_{L^2(\Omega;\mathcal H)}ds\leq C\Delta t
\sum_{i=0}^{m-1}\left\|X^h(t_i)-X_i^h\right\|_{L^2(\Omega;\mathcal H)}$. Thus,
\begin{equation}
\label{L1final}
L_1
\leq
C\Delta t^{\frac{2H+\beta}{2}}
+
C\Delta t
\sum_{i=0}^{m-1}
\left\|
X^h(t_i)-X_i^h
\right\|_{L^2(\Omega;\mathcal H)}.
\end{equation}
On the other hand, Lemma \ref{stoch_quad_error} gives
\begin{equation}
\label{L2final}
L_2
\leq
C\Delta t^{\frac{2H+\beta}{2}}.
\end{equation}
Combining \eqref{temporal_split}, \eqref{L1final}, and
\eqref{L2final}, we obtain
\begin{align*}
\left\|
X^h(t_m)-X_m^h
\right\|_{L^2(\Omega;\mathcal H)}
\leq
C\Delta t^{\frac{2H+\beta}{2}}+
C\Delta t
\sum_{i=0}^{m-1}
\left\|
X^h(t_i)-X_i^h
\right\|_{L^2(\Omega;\mathcal H)}.
\end{align*}
The discrete Gronwall inequality therefore yields
\begin{equation}
\label{time_err2}
\left\|
X^h(t_m)-X_m^h
\right\|_{L^2(\Omega;\mathcal H)}
\leq
C
\Delta t^{\frac{2H+\beta}{2}}.
\end{equation}
Finally, combining \eqref{space_error_used} and \eqref{time_err2}
gives
\[
\left\|
X(t_m)-X_m^h
\right\|_{L^2(\Omega;\mathcal H)}
\leq
C
\left(
h^{2H+\beta}
+
\Delta t^{\frac{2H+\beta}{2}}
\right).
\]
This completes the proof of Theorem \ref{main}.
\end{proof}

\section{Numerical simulations}
\label{numerik}

In this section, we present numerical simulations to illustrate the
strong convergence result obtained in Theorem~\ref{main} and quantify the mean of the solution for Hurst parameters $H \in (0, \frac{1}{2})$ using Monte Carlo method. We consider
the stochastic advection--diffusion--reaction equation
\begin{equation*}
dX(t)
=
\left[
\nabla\cdot(D\nabla X(t))
-\nabla\cdot(\mathbf{q}X(t))
-\frac{X(t)}{1+X(t)^2}
\right]dt
+
\phi\,dB_Q^H(t)
\end{equation*}
in $\Lambda=[0,1]\times[0,1]$, where $D=
\begin{pmatrix}
10^{-2} & 0\\
0 & 10^{-3}
\end{pmatrix}$, $\phi=2$ and $X_0=0$.
We prescribe the Dirichlet boundary condition $X=1$ at $x=0$ and
homogeneous Neumann boundary conditions on the remaining parts of
$\partial\Lambda$. The nonlinear function is given by $\displaystyle F(v)=-\frac{v}{1+v^2}$.
It is straightforward to verify that $F$ is twice differentiable and
satisfies Assumption \ref{non}.
The velocity field $\mathbf{q}$ is obtained by solving Darcy's problem
\[
\left\{
\begin{aligned}
\nabla\cdot\mathbf{q} &= 0,\\
\mathbf{q} &= -k\nabla p.
\end{aligned}
\right.
\]
with
$p=1$ at $x=0$, $p=0$ at $x=1$
and homogeneous no-flow conditions at $y=0$ and $y=1$. The
permeability $k$ is modeled as a lognormal random field using a
truncated Karhunen--Lo\`eve expansion with correlation parameters
$b_1=0.4$ and $b_2=0.2$. We retain $30$ KL modes and use the same
realization of the permeability field throughout the simulations.
The corresponding log-permeability field and Darcy velocity
streamlines are shown in Fig.~\ref{fig:num_fields}.
\begin{figure}[htbp]
\centering
\begin{minipage}{0.48\textwidth}
    \centering
    \includegraphics[width=\textwidth]
    {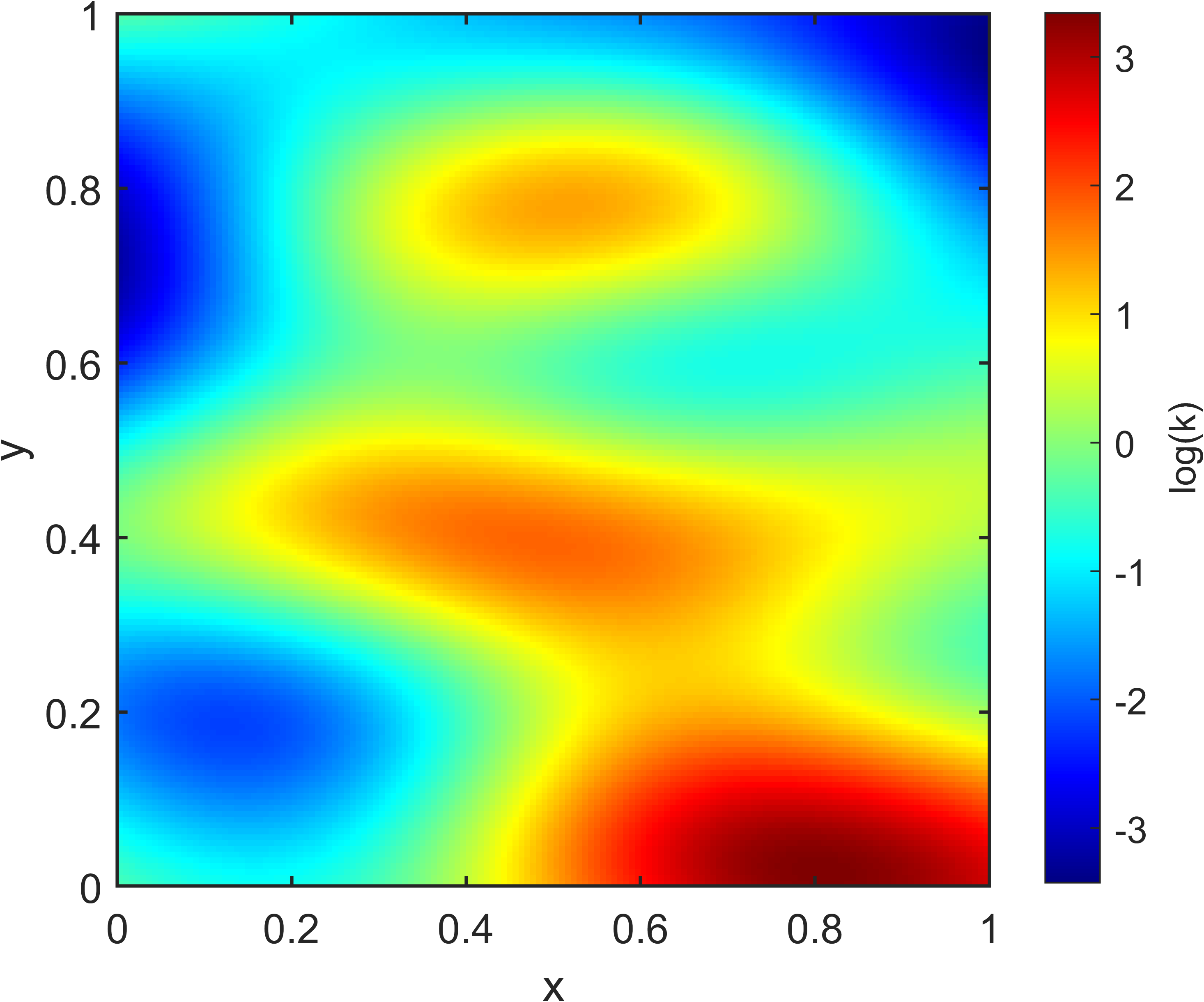}\\
    \textnormal{(a)}
\end{minipage}
\hfill
\begin{minipage}{0.48\textwidth}
    \centering
    \includegraphics[width=0.8\textwidth]
    {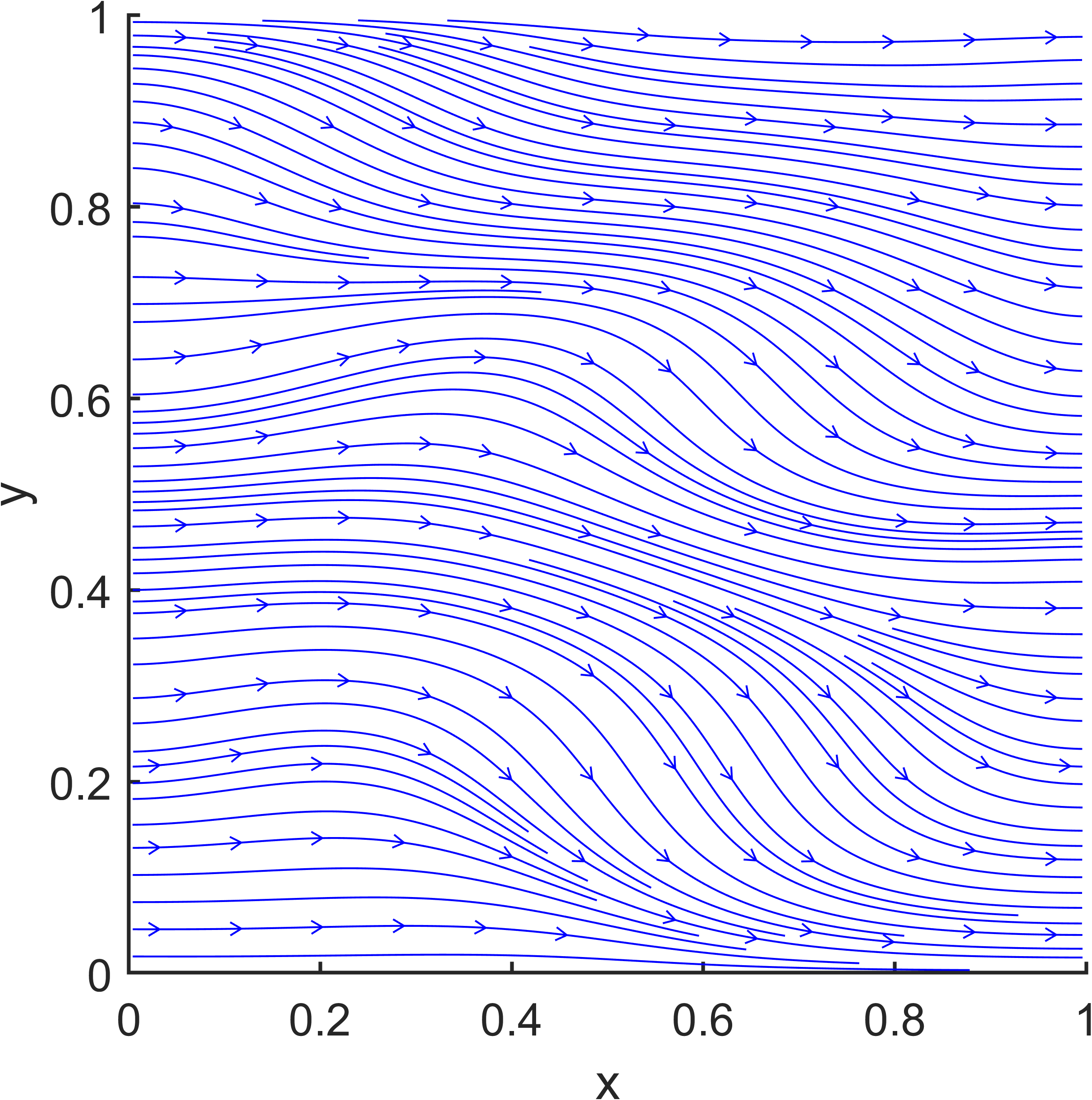}\\
    \textnormal{(b)}
\end{minipage}
\caption{(a) Stochastic log-permeability field and
(b) streamlines of the corresponding Darcy velocity field.}
\label{fig:num_fields}
\end{figure}

For the fractional noise, we use the truncated expansion
\[
B_Q^H(t)
=
\sum_{\substack{0\leq i,j\leq60\\ i+j>0}}
\sqrt{q_{ij}}\,e_{ij}\beta_{ij}^H(t),
\qquad
q_{ij}=(i+j)^{-6.1},
\]
where $\{\beta_{ij}^H\}$ are independent real-valued fractional
Brownian motions and $\{e_{ij}\}$ denotes the tensor-product cosine
basis. The exponent $6.1$ ensures a sufficiently fast spectral decay to
satisfy the spatial regularity condition
$A^\kappa\phi\in L_2^0$ for every $\kappa\in[0,1]$.
Indeed, for $q_{ij}=(i+j)^{-r}$, this condition holds provided
$r>4\kappa+2$. Since $4\kappa+2\leq6$ for
$\kappa\in[0,1]$, the choice $r=6.1$ satisfies this requirement. The fractional Brownian increments are generated using the
Davies--Harte circulant embedding method \cite{dieker2004simulation}. We consider $H\in\{0.20,0.30,0.40,0.49\}$. We discretize the spatial domain using a uniform $150\times150$ finite-volume grid, which is well suited for the conservative treatment of the Darcy fluxes. We take $T=1$ and investigate the temporal convergence of SETDM1. Since the exact solution is unavailable, the numerical solution with
$\Delta t_{\mathrm{ref}}=1/1600$ is taken as the reference solution.
We test $\Delta t=1/25,\,1/50,\,1/100,\,1/200,\,1/400$, using
$M=200$ fractional Brownian sample paths. The errors are evaluated at
$T=1$ in the root mean-square $L^2(\Lambda)$ norm. For each
realization, the reference and coarse solutions are driven by the
same fractional Brownian path.
Figure~\ref{fig:convergence} shows the strong errors as functions of
the time step. The observed convergence orders are $0.72$, $0.83$,
$0.94$, and $1.01$ for $H=0.20$, $0.30$, $0.40$, and $0.49$,
respectively. For $\beta=1$, Theorem~\ref{main} predicts the temporal
convergence rate $H+\frac12$, which gives the theoretical orders $0.70$, $0.80$, $0.90$, and $0.99$, respectively. The observed rates are therefore in good agreement with the theoretical convergence rates.
\begin{figure}[htbp]
\centering
\includegraphics[width=0.5\textwidth]
{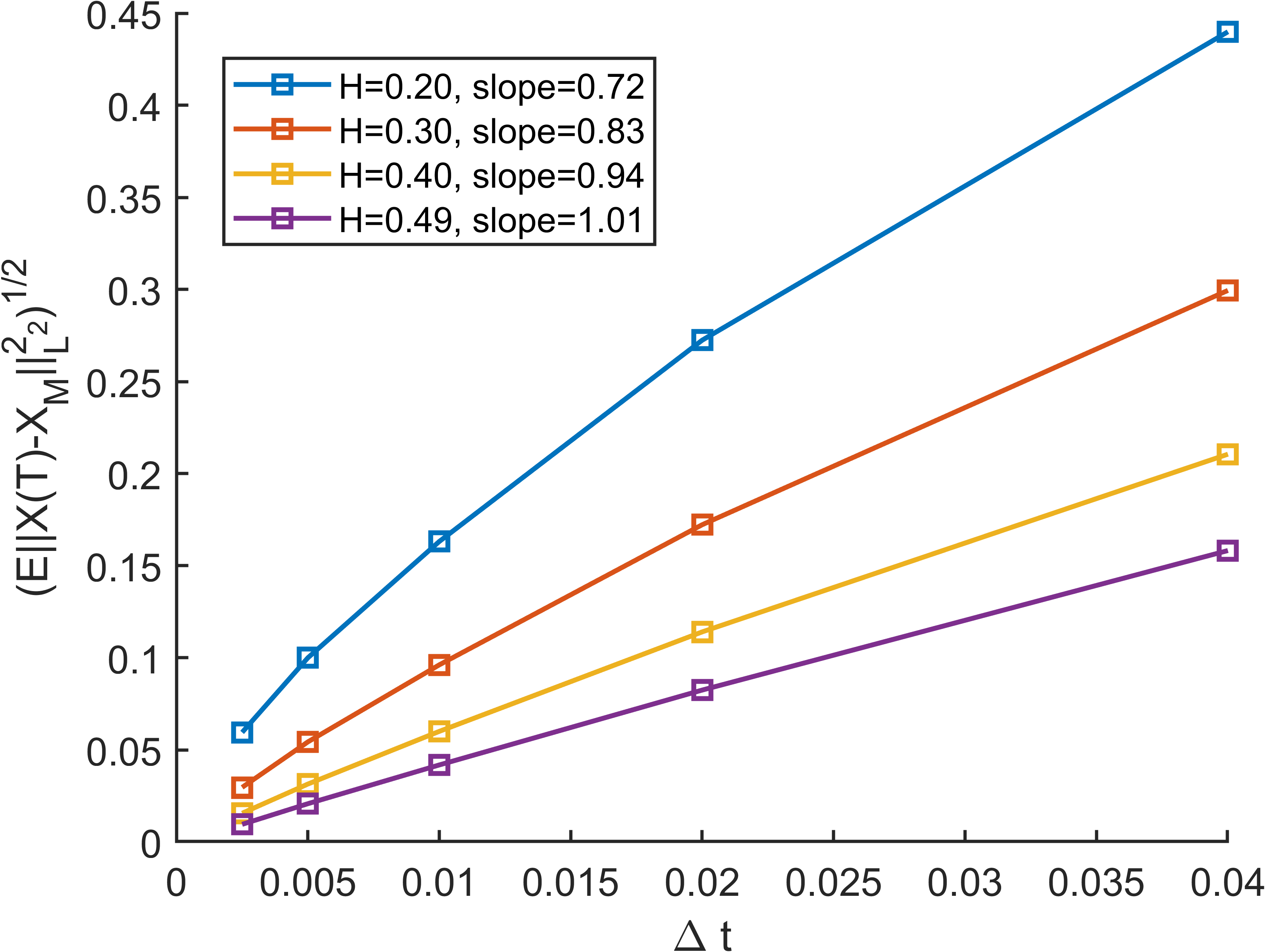}
\caption{Strong convergence of SETDM1 for
$H=0.20$, $0.30$, $0.40$, and $0.49$.}
\label{fig:convergence}
\end{figure}
Figure~\ref{fig:solutions} shows the deterministic solution and the
Monte Carlo mean of the stochastic solution at $T=1$. The
deterministic solution is obtained by suppressing the fractional
noise while keeping all the other parameters unchanged. Compared with the deterministic solution, the stochastic mean
preserves the main transport pathways induced by the heterogeneous
Darcy velocity field, while the fractional forcing changes the
concentration levels along these pathways. The comparison between
$H=0.20$ and $H=0.40$ also shows moderate changes in the intensity
and spatial distribution of the mean concentration.

\begin{figure}[htbp]
\centering
\begin{minipage}{0.32\textwidth}
    \centering
    \includegraphics[width=\textwidth]
    {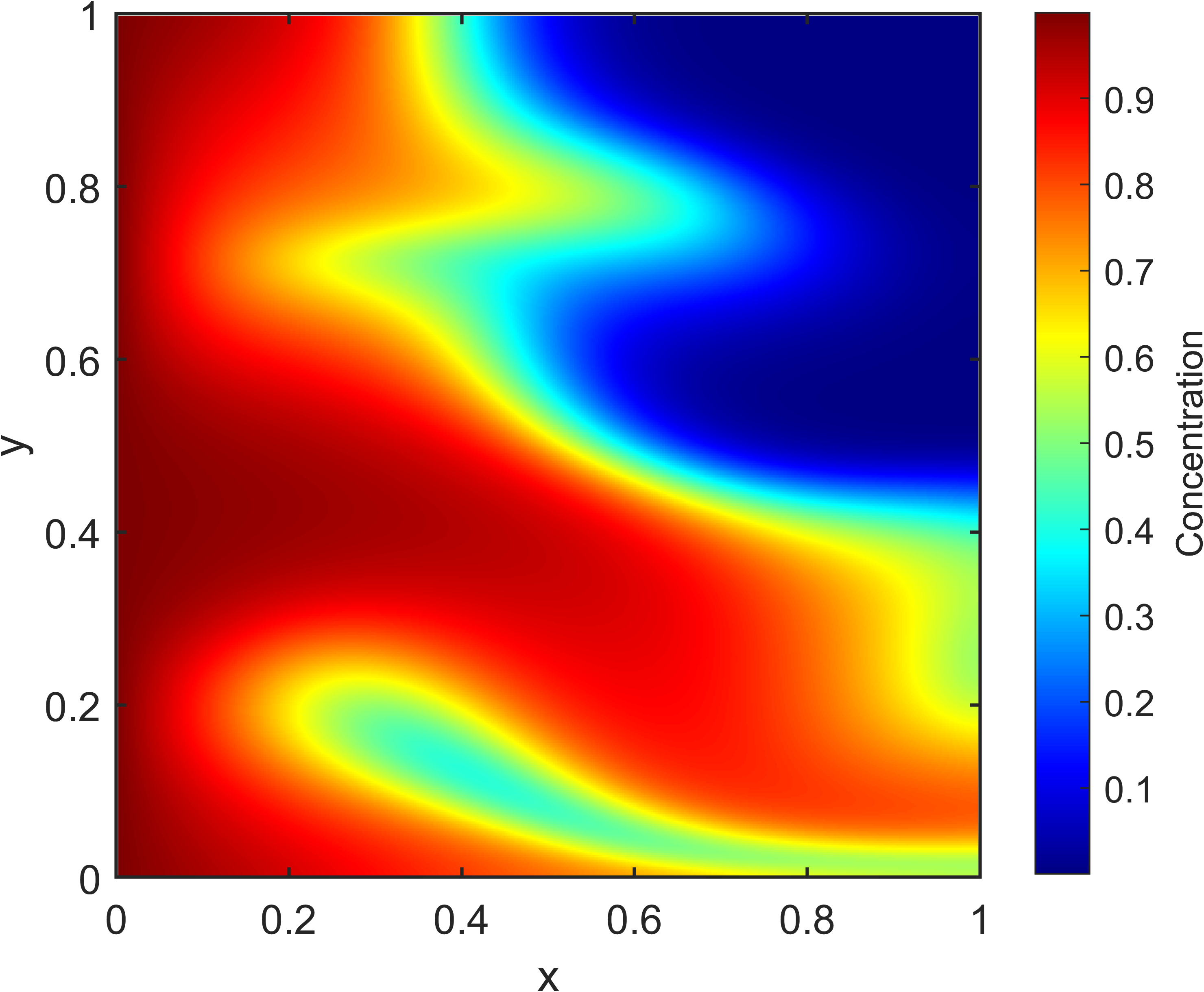}\\
    \textnormal{(a)}
\end{minipage}
\hfill
\begin{minipage}{0.32\textwidth}
    \centering
    \includegraphics[width=\textwidth]
    {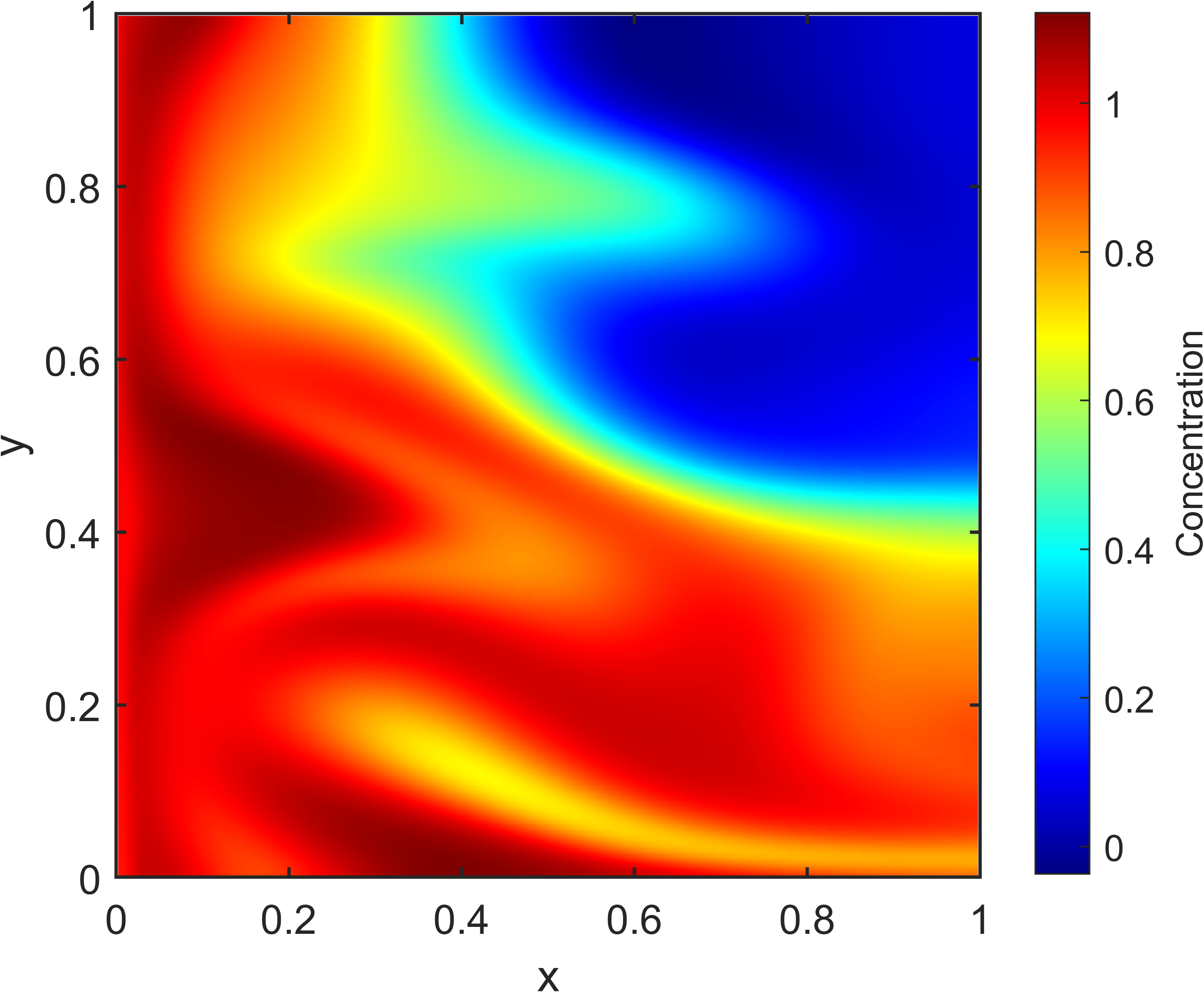}\\
    \textnormal{(b)}
\end{minipage}
\hfill
\begin{minipage}{0.32\textwidth}
    \centering
    \includegraphics[width=\textwidth]
    {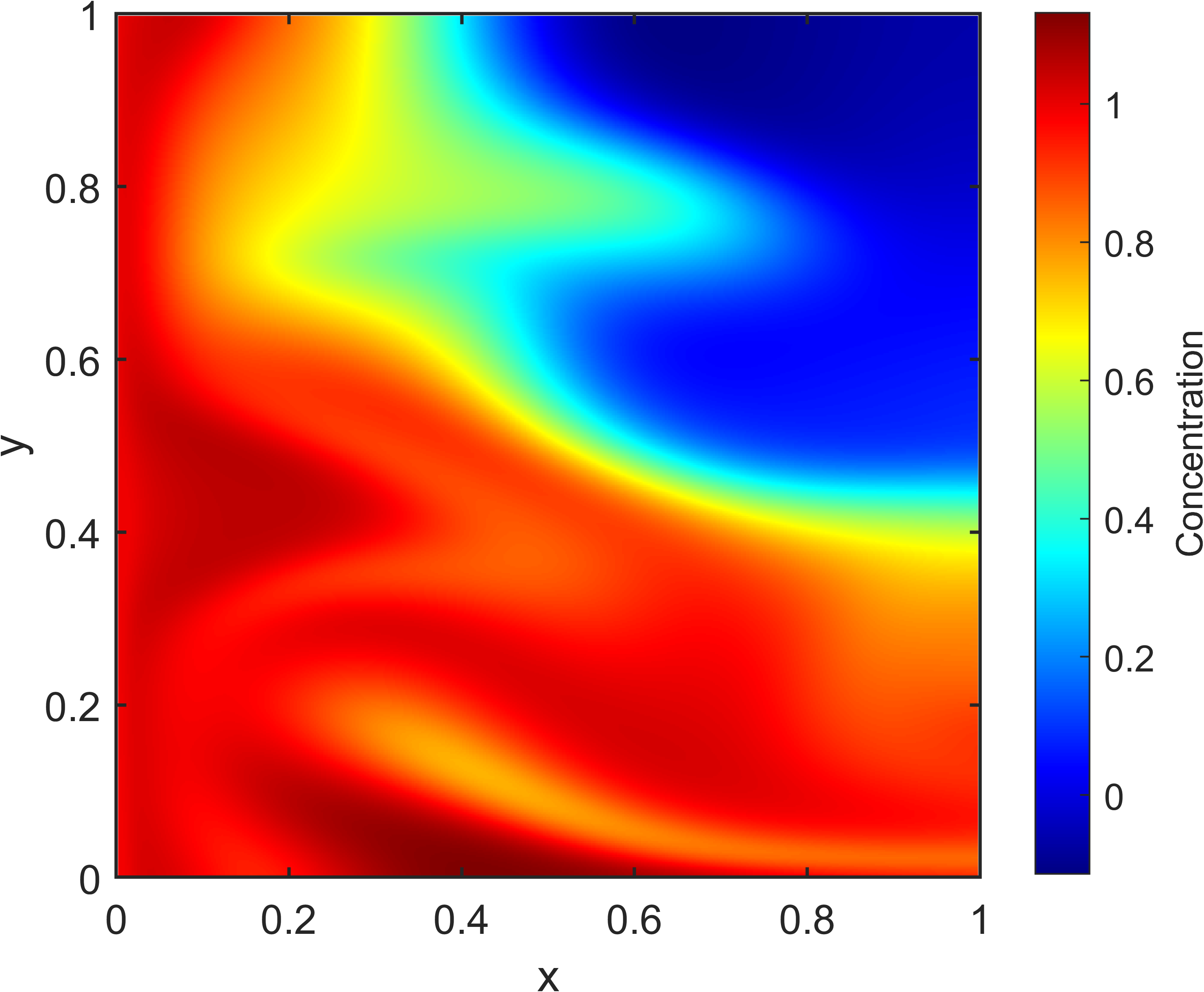}\\
    \textnormal{(c)}
\end{minipage}
\caption{Numerical solutions at $T=1$: (a) deterministic solution,
(b) mean stochastic solution for $H=0.20$, and
(c) mean stochastic solution for $H=0.40$.}
\label{fig:solutions}
\end{figure}

\bibliographystyle{plain}
\bibliography{references}
\end{document}